\documentclass[11pt,a4paper]{article}
\usepackage[a4paper]{geometry}
\usepackage{amssymb,latexsym,amsmath,amsfonts,amsthm}
\usepackage{graphicx}
\usepackage{epsfig}
\usepackage{overpic}
\usepackage{ulem}
\usepackage{comment}
\usepackage[textsize=footnotesize]{todonotes}



\usepackage{tikz}
\usetikzlibrary{decorations}
\usetikzlibrary{arrows}
\usetikzlibrary{shapes}

\usepackage{overpic}

\DeclareMathOperator{\Ai}{Ai}

\DeclareMathOperator{\dist}{dist}
\DeclareMathOperator{\im}{Im}
\DeclareMathOperator{\re}{Re}
\DeclareMathOperator{\sgn}{sgn}

\renewcommand{\Im}{\im}
\renewcommand{\Re}{\re}
\newcommand{\ds}{\displaystyle}

\newtheorem{theorem}{Theorem}[section]
\newtheorem{lemma}[theorem]{Lemma}
\newtheorem{proposition}[theorem]{Proposition}

\theoremstyle{definition}

\newtheorem{rhp}[theorem]{RH problem}

\newtheorem{remark}[theorem]{Remark}

\numberwithin{equation}{section}

\hyphenation{pa-ra-me-tri-za-tion}

\title{Asymptotic behavior and zero distribution of polynomials orthogonal with respect
to Bessel functions}

\author{Alfredo Dea\~no\footnotemark[1]\,, Arno B.J.~Kuijlaars\footnotemark[2]\,, and Pablo Rom\'an\footnotemark[3]}
\date{\today}

\begin{document}

\maketitle
\renewcommand{\thefootnote}{\fnsymbol{footnote}}
\footnotetext[1]{Department of Computer Science, KU Leuven, Celestijnenlaan 200A, 3001 Leuven, 
Belgium, email: alfredo.deano\symbol{'100}cs.kuleuven.be}
\footnotetext[2]{Department of Mathematics, KU Leuven, Celestijnenlaan 200B, 
3001 Leuven, Belgium, email: arno.kuijlaars\symbol{'100}wis.kuleuven.be}
\footnotetext[3]{CIEM, FaMAF, Universidad Nacional de C\'ordoba, Medina Allende s/n 
Ciudad Universitaria, C\'ordoba, Argentina, email: roman\symbol{'100}famaf.unc.edu.ar}

\begin{abstract}
We consider polynomials $P_n$ orthogonal with respect to the weight 
$J_{\nu}$ on $[0,\infty)$, where $J_{\nu}$ is the Bessel function of order $\nu$.
Asheim and Huybrechs considered these polynomials in connection with 
complex Gaussian quadrature for oscillatory integrals. They observed that
the zeros are complex and accumulate as $n \to \infty$ near the 
vertical line $\Re z = \frac{\nu \pi}{2}$. 
We prove this fact for the case $0 \leq \nu \leq 1/2$ from strong
asymptotic formulas that we derive for the polynomials $P_n$ in the complex plane.
Our main tool is the Riemann-Hilbert problem for orthogonal polynomials, 
suitably modified to cover the present situation, and the Deift-Zhou steepest
descent method. A major part of the work is devoted to the construction of 
a local parametrix at the origin, for which we give an existence proof that 
only works for $\nu \leq 1/2$.  
\end{abstract}

\section{Introduction}

In this paper we are interested in the polynomials $P_n$ that are orthogonal with
respect to the weight function $J_{\nu}$ on $[0,\infty)$, where $J_{\nu}$ is
the Bessel function of order $\nu \geq 0$. The Bessel function is oscillatory with an amplitude
that decays like $\mathcal{O}(x^{-1/2})$ as $x \to \infty$, and therefore the moments 
\[ \int_0^{\infty} x^j J_{\nu}(x) dx \] 
do not exist. It follows that the polynomials $P_n$ can not be defined by the usual orthogonality
property 
\begin{equation} \label{Pnx} 
	\int_0^\infty P_n(x) x^j  J_\nu(x) dx =0, \qquad j=0,1,\ldots,n-1.
	\end{equation}

Asheim and Huybrechs \cite{AH} introduced the polynomials $P_n$ via a regularization
of the weight with an exponential factor. For each $s > 0$, they consider the monic polynomial $P_n(x;s)$ 
of degree $n$ that is orthogonal with respect to the weight function $J_{\nu}(x) e^{-sx}$, in the following sense:
\begin{equation} \label{Pnxs} 
	\int_0^\infty  P_n(x;s) x^j J_\nu(x) e^{-sx}dx=0, \qquad j=0,1,\ldots,n-1,
	\end{equation}
and they take the limit
\begin{equation} \label{Pnlimit} 
	P_n(x) = \lim_{s \to 0+} P_n(x; s), 
	\end{equation}
	provided that the limit exists. 
Since the weight function $J_{\nu}(x)e^{-sx}$ changes sign on the positive real axis, there is 
actually no guarantee for existence or uniqueness of $P_n(x;s)$. For the limit \eqref{Pnlimit}
we therefore also have to assume that $P_n(x;s)$ exists and is unique for $n$ large enough. 

The polynomials $P_n$ can alternatively be defined by the moments, since the 
limiting moments for the Bessel function of order $\nu \geq 0$ are known, namely
\begin{equation} \label{moments}
	m_j := 
	\lim_{s \to 0+} \int_0^{\infty} x^j J_{\nu}(x) e^{-sx} dx = 
		2^{j} \frac{\Gamma(\frac{1+\nu+j}{2})}{\Gamma(\frac{1+\nu-j}{2})},
\end{equation}
see \cite[section 3.4]{AH}. Thus we have the determinantal formula (which is familiar from
the general theory of orthogonal polynomials) 
\begin{equation} \label{Pndet} 
	P_n(x) = \frac{1}{\Delta_n} 
	\begin{vmatrix} m_0 & m_1 & \cdots & m_{n-1} & m_n \\ m_1 & m_2  & \cdots & m_{n} & m_{n+1} \\
		\vdots & \vdots & \ddots & \vdots & \vdots \\
		m_{n-1} & m_{n} & \cdots & m_{2n-2} & m_{2n-1} \\
		1 & x & \cdots & x^{n-1} & x^n \end{vmatrix} \end{equation}
with a Hankel determinant $\Delta_n = \det \left[ m_{i+j} \right]_{i,j=0}^{n-1}$.
The polynomial $P_n$ thus exists if and only if $\Delta_n \neq 0$.	

Asheim and Huybrechs \cite{AH} analyze Gaussian quadrature rules with oscillatory  
weight functions, such as complex exponentials, Airy and Bessel functions. The nodes for the Gaussian quadrature
rule are the zeros of the orthogonal polynomials. Since the weight is not real and positive
on the interval of orthogonality there is a problem of existence and uniqueness of the orthogonal
polynomials. In addition, even when the orthogonal polynomial exists, its zeros may not be real, and they
may distribute themselves on some curve or union of curves in the complex plane as the degree tends to infinity.
Examples of this kind of behavior are known in the literature, for instance 
with Laguerre or Jacobi polynomials with non--standard parameters, see \cite{AMMT}, \cite{KuijMcL} and \cite{KuijMF},
and for complex exponentials \cite{Deano}.

\begin{figure}[t]
\centering
\begin{overpic}[width=.45\textwidth]{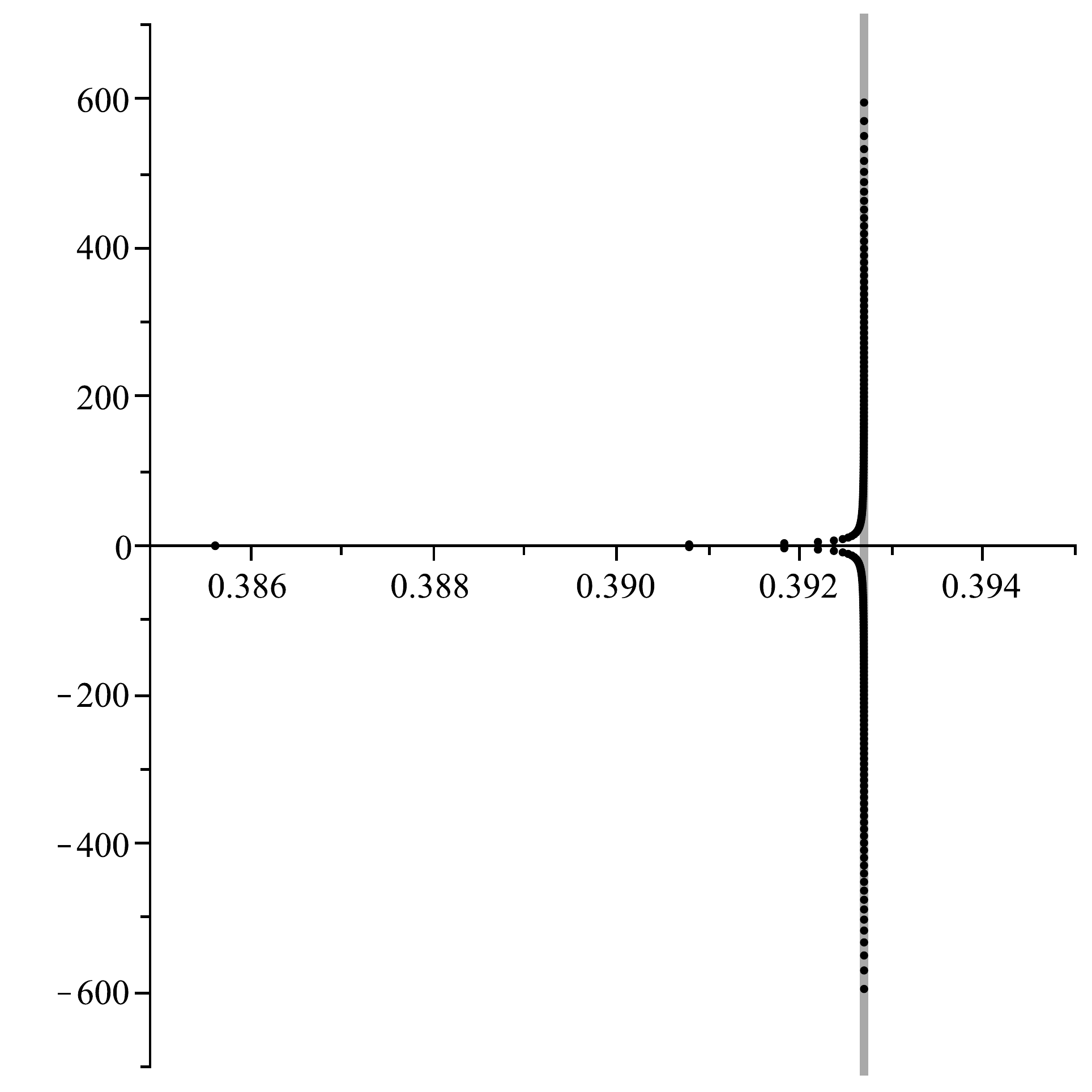}
\end{overpic}
\begin{overpic}[width=.45\textwidth]{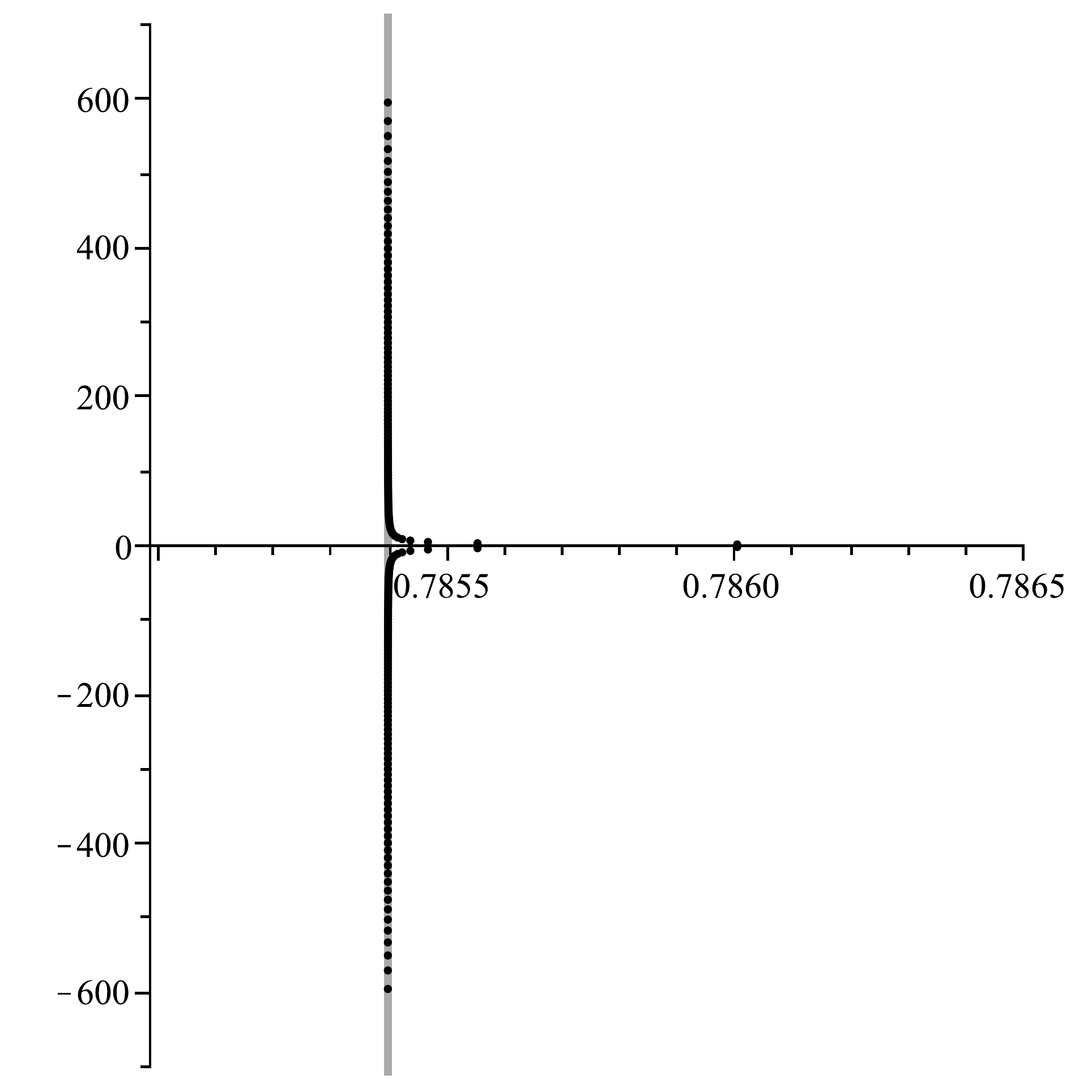}
\end{overpic}
\caption{Plot of the zeros of the polynomials $P_n$ for $n=200$ and $\nu=0.25$ (left), $\nu=0.5$ (right).}
\label{fig:points_plot_small}
\end{figure}
\begin{figure}[t]
\centering
\begin{overpic}[width=.45\textwidth]{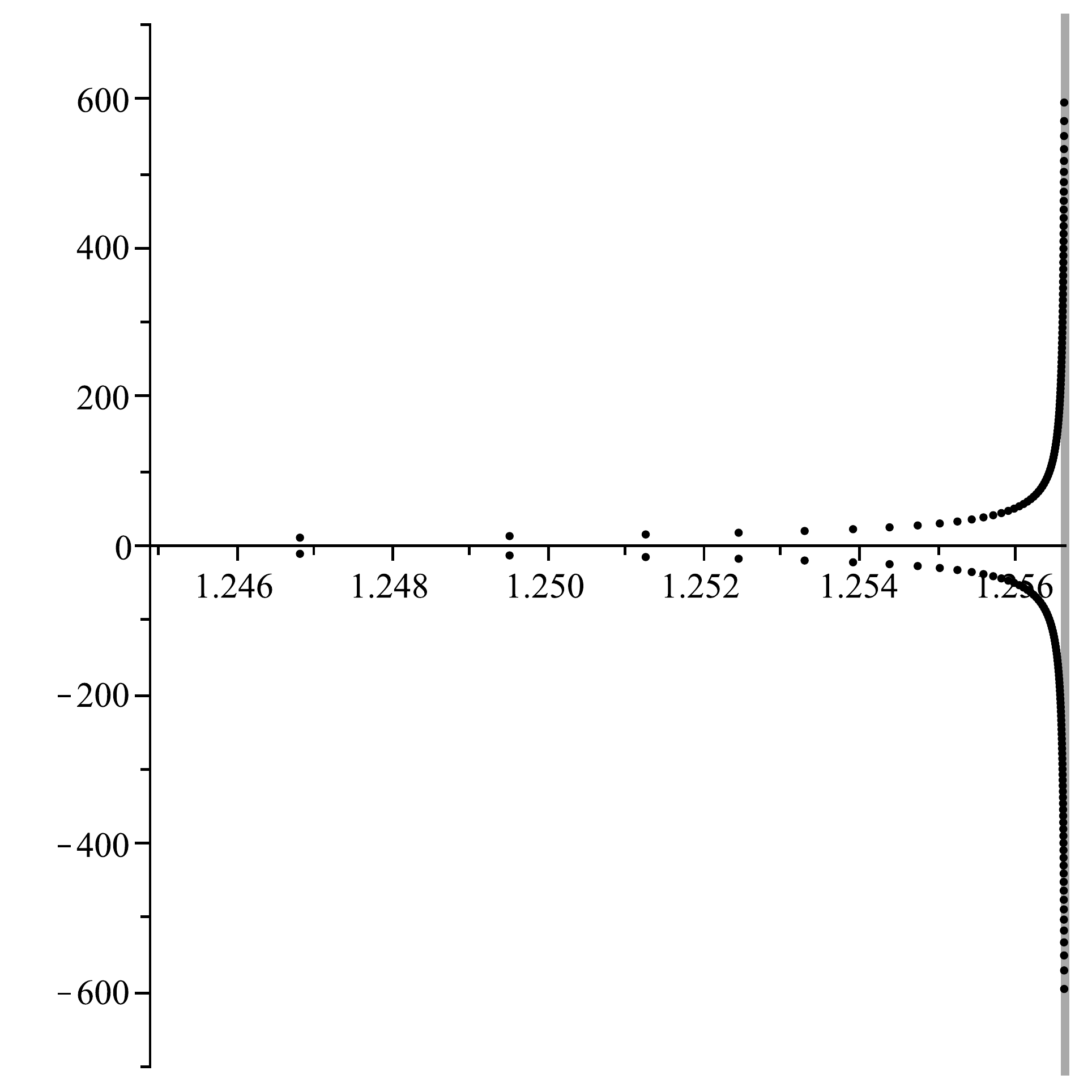}
\end{overpic}
\begin{overpic}[width=.45\textwidth]{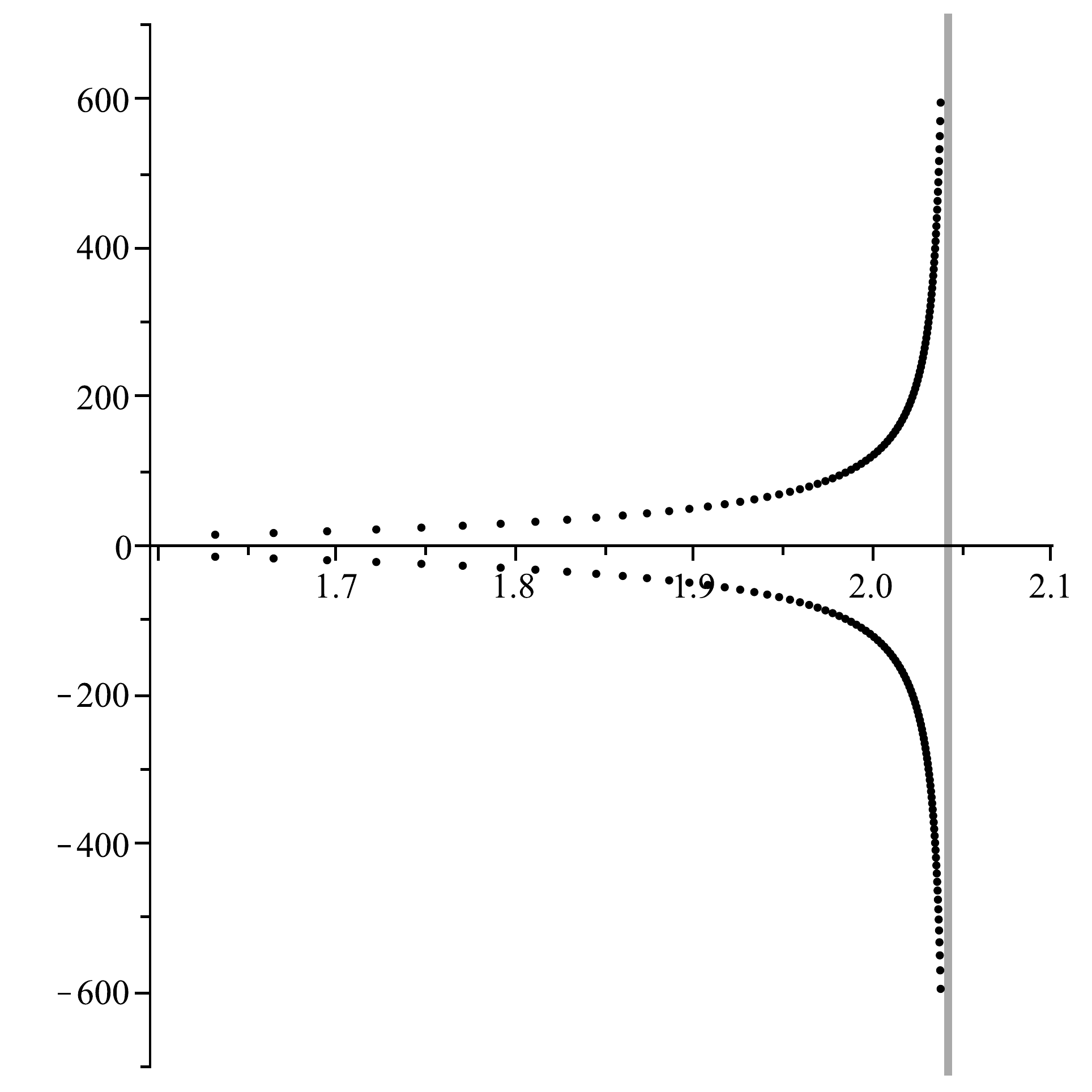}
\end{overpic}
\caption{Plot of the zeros of the polynomials $P_n$ for $n=200$ and $\nu=0.8$ (left), $\nu=1.3$ (right).}
\label{fig:points_plot_big}
\end{figure}

In the present case, with orthogonality defined as \eqref{Pnxs}--\eqref{Pnlimit}, it was shown in 
\cite[Theorem 3.5]{AH} that the zeros of $P_n$ are on the imaginary axis in case $\nu = 0$ and $n$ is even. 
Namely, if $t_1, \ldots, t_{n/2}$ are the zeros of the orthogonal polynomial of degree $n/2$ (where $n$ is even)
with respect to the positive weight $K_0(\sqrt{t}) t^{-1/2}$ on $[0,\infty)$, then the
zeros of $P_n$ are $\pm i \sqrt{t_1}, \ldots \pm i \sqrt{t_{n/2}}$. Here $K_0$ is the modified
Bessel function of the second kind.

For $\nu > 0$ the zeros of $P_n$ are not on the imaginary axis, as is clear from the illustrations given in
\cite{AH}, see also the Figures \ref{fig:points_plot_small} and \ref{fig:points_plot_big}.
The computations have been carried out in {\sc Maple}, using extended precision. 
From these numerical experiments Asheim and Huybrechs \cite{AH} concluded that the zeros seem to cluster
along the vertical line $\Re z = \frac{\nu \pi}{2}$. More precisely, for $\nu \leq \frac{1}{2}$, one sees in
Figure \ref{fig:points_plot_small} that the vast majority of zeros are near a vertical line,
which is indeed at $\Re z = \frac{\nu \pi}{2}$. 

For $\nu > \frac{1}{2}$ one sees in Figure \ref{fig:points_plot_big} that the zeros with large imaginary
part are close to the vertical line $\Re z = \frac{\nu \pi}{2}$, although they are not as close to the vertical
line as the zeros in Figure \ref{fig:points_plot_small}.

We were intrigued by these figures and the aim of this paper is to give a partial explanation
of the observed behavior of zeros. We are able to analyze the polynomials $P_n$ when
$0 \leq \nu \leq \frac{1}{2}$ in the large $n$ limit by means of a Riemann-Hilbert analysis. 
The result is that we indeed find that the real parts of most of the zeros tend to $\frac{\nu \pi}{2}$ as $n \to \infty$.

We are not able to handle the case $\nu > \frac{1}{2}$, since in this case our method to construct
a local parametrix at the origin fails. This difficulty may very well be related to the 
different behavior of the zeros in the case $\nu > \frac{1}{2}$. It would be very interesting to analyze
this case as well. From the figures it seems that there is a limiting curve for the scaled zeros,
if we divide the imaginary parts of the zeros by $n$ and keeping the real parts fixed. This limiting
curve is a vertical line segment if $\nu \leq \frac{1}{2}$ (this will follow from our results), but we do
not know the nature of this curve if $\nu > \frac{1}{2}$.  

\section{Statement of main results}

\subsection{Convergence of zeros}
Our first result is about the weak limit of zeros.
\begin{theorem} \label{Th0} Let $0 < \nu \leq \frac{1}{2}$. Then 
the polynomials $P_n$ exist for $n$ large enough. In addition, the  zeros of $P_n(in \pi z)$
all tend to the interval $[-1,1]$ and have the limiting density
\begin{equation} \label{psidensity2}
	\psi(x)=\frac{1}{\pi}\log \frac{1+\sqrt{1-x^2}}{|x|}, \qquad x\in[-1,1].
\end{equation}
\end{theorem}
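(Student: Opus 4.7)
My plan is to derive the theorem as a consequence of a full Riemann-Hilbert steepest-descent analysis of $P_n$, along the lines flagged in the introduction. Since $J_\nu$ is oscillatory and not integrable against polynomials on $[0,\infty)$, the usual Fokas-Its-Kitaev RH problem is not directly available. I would use the decomposition $J_\nu(x) = \tfrac12(H_\nu^{(1)}(x) + H_\nu^{(2)}(x))$ together with the exponential decay of $H_\nu^{(1)}$ (resp.\ $H_\nu^{(2)}$) in the upper (resp.\ lower) half-plane to deform the line of orthogonality into a union $\Gamma_+ \cup \Gamma_-$ of two contours emerging from $0$ into the upper and lower half-planes, on which the Hankel function integrals converge absolutely. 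The resulting $2 \times 2$ matrix $Y(z)$ built from $P_n$ and its Cauchy transforms against $H_\nu^{(1,2)}$ on $\Gamma_\pm$ then solves a non-standard RH problem with jumps on $\Gamma_+ \cup \Gamma_-$ and a branch structure at $0$ coming from $z^\nu$.

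Next I would rescale via $z = i n \pi w$, as suggested by the theorem. Under this change the Hankel exponentials $e^{\pm i z}$ become $e^{\mp n \pi w}$, and $\Gamma_\pm$ becomes a contour in the right (resp.\ left) half of the $w$-plane on which this exponential decays. The associated equilibrium problem, after reduction to the real axis, is the classical Freud-type problem for the external field $V(x) = \pi|x|$, whose extremal probability measure is supported on $[-1,1]$ with density exactly $\psi$ from \eqref{psidensity2}. I would verify this by computing the Cauchy transform of $\psi$ and checking the Euler-Lagrange equation on $[-1,1]$ together with the strict variational inequality off it; both reduce to elementary computations using the explicit formula for $\psi$.

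With the $g$-function in hand I would carry out the usual Deift-Zhou chain $Y \to T \to S \to R$: normalizing at infinity through $g$, opening lenses around $[-1,1]$ to turn the oscillatory jumps into exponentially small ones, and comparing with parametrices. The outer parametrix would be built from a Szeg\H{o}-type function that incorporates the $z^\nu$ branch and the density $\psi$; at the soft edges $\pm 1$ the square-root vanishing of $\psi$ permits the classical Airy parametrix. The main obstacle, as the authors themselves warn, is the local parametrix at $0$: here $\psi$ has a logarithmic singularity, the weight carries a $z^\nu$ branch, and all arms of $\Gamma_\pm$ meet. I would try to construct this parametrix from a model RH problem whose solution is assembled from Bessel or Hankel functions of order $\nu$; the matching with the outer parametrix should force an algebraic identity which is solvable precisely in the range $0 \leq \nu \leq \tfrac12$, accounting for the hypothesis of the theorem.

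Once the matching produces $R = I + O(1/n)$ uniformly, unwinding the transformations delivers a strong asymptotic formula for $P_n(i n \pi w)$ throughout the $w$-plane. Existence of $P_n$ for large $n$ (equivalently $\Delta_n \neq 0$ in \eqref{Pndet}) follows from solvability of the RH problem. On $[-1,1]$ the asymptotics reduce to a sum of two complex exponentials with phases given by the boundary values of $g$, and a standard argument-principle count of zeros inside small contours yields the weak convergence of the zero-counting measures of $P_n(i n \pi \cdot)$ to $\psi(x)\,dx$, completing the proof.
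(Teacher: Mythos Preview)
Your overall architecture matches the paper's: the Hankel-function deformation is equivalent to the paper's use of the connection formula \eqref{connection} with $K_\nu$ (after the rotation $z\mapsto in\pi z$ your contours $\Gamma_\pm$ become the real axis, and the weight becomes $e^{\mp\nu\pi i/2}K_\nu(n\pi|x|)$); the equilibrium problem, the $g$-function, the lens opening, the Airy parametrices at $\pm1$, and the final potential-theoretic extraction of the zero distribution are all exactly as in the paper.

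The genuine gap is your treatment of the local parametrix at $0$. You propose to build it explicitly from Bessel or Hankel functions of order $\nu$, with the restriction $\nu\le\tfrac12$ emerging from ``an algebraic identity'' in the matching. This is precisely what the authors report they could \emph{not} do: at the origin the density $\psi$ has a logarithmic singularity, the weight carries both an algebraic factor $|x|^{-\nu}$ and a phase jump $e^{\mp\nu\pi i/2}$, and no standard special-function model fits. The paper instead gives an \emph{existence-only} construction (Section~3.8): after conjugation the jump on the imaginary axis is reduced to triangular form with off-diagonal entries $\eta_1,\eta_2$, and the parametrix is obtained by inverting $I-K_2K_1$ and $I-K_1K_2$ for explicit integral operators $K_1,K_2$ between $L^2$ spaces on short imaginary segments. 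The bound $\nu\le\tfrac12$ is not algebraic but analytic: it is exactly the range in which the operator norms satisfy $\|K_1\|\,\|K_2\|=\mathcal{O}((\log n)^{-2\nu})\to 0$ and the resulting entrywise bounds on $\widehat P-I$ (Lemma~\ref{lem:Phat}) decay.

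A second, related inaccuracy: the matching does \emph{not} give $R=I+\mathcal{O}(1/n)$. The dominant error comes from the circle around $0$ and is $\mathcal{O}(\epsilon_n)$ with $\epsilon_n=n^{\nu-1/2}(\log n)^{-\nu-1/2}$; for $\nu=\tfrac12$ this is only $\mathcal{O}(1/\log n)$. This weaker rate is still enough for Theorem~\ref{Th0}, since the zero-distribution argument only needs $\tfrac1n\log|\widetilde P_n(z)|\to\Re g(z)$ locally uniformly off $[-1,1]$, but you should not claim $\mathcal{O}(1/n)$.
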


The convergence of zeros to the limiting density \eqref{psidensity2} is in the sense of
weak convergence of normalized zero counting measures. 
This means that if $z_{1,n}, \ldots, z_{n,n}$ denote the $n$ zeros of $P_n$, then
\[ \lim_{n \to \infty} \frac{1}{n} \sum_{j=1}^n \delta_{\frac{z_{j,n}}{i \pi n}} = \psi(x) dx \]
in the sense of weak$^*$ convergence of probability measures. Equivalently, we have
\[ \lim_{n \to \infty} \frac{1}{n} \sum_{j=1}^n f\left( \frac{z_{j,n}}{i \pi n}\right) = \int_{-1}^1 f(x) \psi(x) dx \]
for every function $f$ that is defined and continuous in a neighborhood of $[-1,1]$
in the complex plane.

The weak limit of zeros, if we rescale them by a factor $i \pi n$, exists
and does not depend on the value of $\nu$.
Theorem \ref{Th0}  is known to hold for $\nu=0$, and we believe that 
it also holds true for $\nu > \frac{1}{2}$.

Regarding the real parts of the zeros of $P_n$ as $n\to\infty$,  we have
the following result.

\begin{theorem}\label{Th2}
Let $0<\nu\leq 1/2$, and let $\delta>0$ be fixed. Then there exist $n_0\in\mathbb{N}$ 
and $C > 0$ such that for $n\geq n_0$, every zero $z_{j,n}$ of $P_n$ outside the disks 
$D(0,n\delta)$ and $D(\pm n\pi i, n \delta)$ satisfies 
 \begin{equation} \label{Rezjn}
 \left|	\Re z_{j,n} - \frac{\nu\pi}{2} \right| \leq C \epsilon_n,
 \end{equation}
where
\begin{equation} \label{epsilonn}
	\epsilon_n = \frac{n^{\nu-1/2}}{(\log n)^{\nu+1/2}}.
	\end{equation}
\end{theorem}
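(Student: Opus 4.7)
The plan is to deduce Theorem \ref{Th2} from the strong asymptotic formulas for $P_n$ in the bulk produced by the Deift-Zhou steepest-descent analysis developed in the body of the paper. Fix $\delta>0$ and set $\Omega_\delta := \mathbb{C}\setminus\bigl(D(0,n\delta)\cup D(in\pi,n\delta)\cup D(-in\pi,n\delta)\bigr)$. In $\Omega_\delta$ one should have a two-term representation of the form
\[
P_n(z) = E_n(z)\bigl[A(z)\,e^{n\phi_+(z)}+B(z)\,e^{n\phi_-(z)}\bigr]\bigl(1+\mathcal{O}(\eta_n(z))\bigr),
\]
where $E_n$ does not vanish on $\Omega_\delta$, the two exponentials correspond to the two branches of the $g$-function associated with the equilibrium density $\psi$ of Theorem \ref{Th0}, and the prefactors $A(z),B(z)$ are Szeg\H{o}-type functions of the outer parametrix that carry the $\nu$-dependence. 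The error $\eta_n(z)$ is inherited from the local parametrices; in $\Omega_\delta$ its dominant contribution comes from the non-standard parametrix at the origin, which is the main technical novelty of the paper.

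Theorem \ref{Th0} already tells us that the zeros of $P_n$ cluster on the imaginary axis at leading order, which is exactly where $|A\,e^{n\phi_+}|=|B\,e^{n\phi_-}|$. To pinpoint the real part of each zero, I would write the leading-order zero equation as $e^{n[\phi_+(z)-\phi_-(z)]}=-B(z)/A(z)$, take logarithms, and linearize about a point $z_0=iy_0$ on the imaginary axis. Setting $z=z_0+x$ with $|x|$ small and taking real parts, the equation becomes
\[
n\,\Re\bigl[\phi_+'(z_0)-\phi_-'(z_0)\bigr]\,x = -\log\bigl|B(z_0)/A(z_0)\bigr|+\mathcal{O}(\eta_n(z_0)),
\]
while the imaginary part supplies the quantization condition selecting a discrete set of zeros along the shifted axis. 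The explicit form of the Szeg\H{o} factors (built from $J_\nu$, whose large-argument asymptotics carry the characteristic phase $\nu\pi/2+\pi/4$) should reveal that $-\log|B(z_0)/A(z_0)|$ has a leading piece equal to $\tfrac{\nu\pi}{2}\cdot n\,\Re[\phi_+'(z_0)-\phi_-'(z_0)]$, so that to leading order $x=\nu\pi/2$, which is the shift appearing in \eqref{Rezjn}.

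The correction to $x$ is bounded by $\mathcal{O}(\eta_n(z_0))$ divided by $n\,|\Re[\phi_+'(z_0)-\phi_-'(z_0)]|$. Careful tracking of the local parametrix at the origin should produce $\eta_n(z_0)$ of size $n^{\nu-1/2}(\log n)^{1/2-\nu}$, while the logarithmic singularity of $\psi$ near the origin (visible in \eqref{psidensity2}) produces the denominator $\asymp \log n$ for $z_0\in\Omega_\delta$; the ratio is precisely $\epsilon_n=n^{\nu-1/2}/(\log n)^{\nu+1/2}$, as claimed. A final Rouch\'e/Hurwitz argument on the boundary of the vertical strip $\{|\Re z-\nu\pi/2|=C\epsilon_n\}\cap\Omega_\delta$ then upgrades the localization of zeros of the asymptotic formula to zeros of $P_n$ itself. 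The main obstacle is obtaining the \emph{sharp} power of $\log n$: a crude balance yields only $\mathcal{O}((\log n)^{-1})$, and the refinement to $(\log n)^{-\nu-1/2}$ requires propagating the precise subleading behavior of the non-standard local parametrix at the origin through the matching with the outer parametrix—this is also the reason the construction is restricted to $0\leq\nu\leq 1/2$.
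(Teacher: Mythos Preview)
Your overall strategy—extract a two-term oscillatory asymptotic from the steepest-descent output and locate zeros where the two terms balance—is exactly what the paper does (Section~\ref{section44}, using Theorem~\ref{Th3}). But your accounting for how the rate $\epsilon_n$ arises is wrong on both factors, and the two errors only accidentally multiply to the correct answer.

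First, the RH error $\eta_n$ inherited from the local parametrix at the origin is \emph{already} $\epsilon_n=n^{\nu-1/2}(\log n)^{-\nu-1/2}$, not $n^{\nu-1/2}(\log n)^{1/2-\nu}$. This is precisely the bound on $\widehat P_{21}$ in Lemma~\ref{lem:Phat}, which propagates to $R(z)=I+\mathcal{O}(\epsilon_n)$ uniformly (equation~\eqref{eq:asymptotics-R}) and then straight into the $\mathcal{O}(\epsilon_n)$ term in \eqref{asymp:Pn:inner}. Second, there is no further division by $\log n$. Your claim that ``the logarithmic singularity of $\psi$ near the origin produces the denominator $\asymp\log n$ for $z_0\in\Omega_\delta$'' cannot be right: by the very definition of $\Omega_\delta$ you are excluding the disk $D(0,n\delta)$, which in the rescaled variable $w=z/(in\pi)$ corresponds to $|w|\geq\delta/\pi$, so $\psi(w)$ is bounded above and below there, not of order $\log n$.

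The paper's argument is more direct than the linearize-and-Rouch\'e scheme you outline. Working in the rescaled variable, one sets $F_n(z)=\exp\bigl(\tfrac{\nu\pi}{2}\psi(z)+i\theta_n(z)\bigr)$ and observes from \eqref{asymp:Pn:inner} that any zero in $E_\delta$ forces $|F_n+F_n^{-1}|=\mathcal{O}(\epsilon_n)$, hence $\bigl|\Re\tfrac{\nu\pi}{2}\psi(z)-\Im\theta_n(z)\bigr|\leq C\epsilon_n$ (Lemma~\ref{lem:allzeros}). Writing $z=x+iy$ and Taylor expanding, this quantity equals $\bigl(\tfrac{\nu}{2}+ny\bigr)\pi\psi(x)+\mathcal{O}(1/n)$; since $\psi(x)$ is bounded below on $E_\delta$, one gets $\bigl|\tfrac{\nu}{2}+ny\bigr|=\mathcal{O}(\epsilon_n)$ directly, and unwinding $z\mapsto in\pi z$ gives \eqref{Rezjn}. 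No Rouch\'e or Hurwitz step is needed, because the inequality already constrains \emph{any} zero rather than merely the zeros of an approximant.
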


\begin{remark}
For each fixed $\delta > 0$ there are approximately $\varepsilon n$ zeros of $P_n$ in the disks
$D(0,n\delta)$ and $D(\pm n\pi i, n\delta)$ as $n$ is large, where
\[ \varepsilon = \int_{-1}^{-1+\delta/\pi} \psi(x) dx + \int_{-\delta/\pi}^{\delta/\pi} \psi(x) dx 
	+ \int_{1-\delta/\pi}^1 \psi(x) dx. \]
This is a consequence of the weak convergence of zeros, see Theorem \ref{Th0}.

Clearly, $\varepsilon \to 0$ as $\delta \to 0$, and so it follows from Theorem \ref{Th2} by
taking $\delta$ arbitrarily small that for all but $o(n)$ zeros one has that the real part 
tends to $\frac{\nu \pi}{2}$ as $n \to \infty$.
\end{remark}

\begin{remark}
We do not have information about the zeros in the disk  $D(0,n \delta)$. In our Riemann-Hilbert
analysis we prove the existence of a local parametrix around the origin, but we do not have
an explicit construction with special functions. Therefore we cannot analyze the zeros near the
origin. 

On the other hand, we do have potential access to the extreme zeros in the disks $D(\pm n \pi i, n \delta)$
since the asymptotics of the polynomials $P_n(in \pi z)$ is given in terms of Airy functions. 
From the figures it seems that the result \eqref{Rezjn} also holds for the extreme zeros, but we omit 
this asymptotic result in Theorem \ref{Th1}, since it does not follow clearly from the construction of the
local parametrices in this case.
\end{remark}

\subsection{Orthogonality of $P_n(in \pi z)$ and discussion}

Theorems \ref{Th0} and \ref{Th2} follow from strong asymptotic formulas for the
rescaled polynomials 
\begin{equation} \label{tildePn} 
	\widetilde{P}_n(z) =  (in \pi)^{-n} P_n(in\pi z). 
	\end{equation}
These polynomials are orthogonal polynomials on the real line,
but with a complex weight function.
\begin{proposition} \label{prop:Pntildeorthogonal}
Let $0 \leq \nu < 1$. Then the polynomial $\widetilde{P}_n$ is the monic orthogonal polynomial
of degree $n$ for the weight
\begin{equation} \label{eq:weightnu}
	\begin{cases} e^{ \nu \pi i/2} K_{\nu}(-n \pi x),  & \text{ for } x < 0, \\
	 e^{-  \nu \pi i/2} K_{\nu}(n \pi x),  & \text{ for } x > 0, 
	\end{cases}
	\end{equation}
on the real line. That is,
\begin{equation} \label{Pntildeorthogonal} 
	\int_{-\infty}^{\infty}\widetilde{P}_n(z) x^j  e^{- \sgn(x) \nu \pi i/2} K_{\nu}(n \pi |x|) dx = 0, \qquad j =0,1 \ldots, n-1. 
	\end{equation}
\end{proposition}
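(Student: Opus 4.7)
My approach is to begin from the regularized orthogonality \eqref{Pnxs} defining $P_n$, to split the Bessel function into its Hankel components $J_\nu = \tfrac12\bigl(H_\nu^{(1)}+H_\nu^{(2)}\bigr)$, and to rotate the contour of integration in each piece onto the positive or negative imaginary semi-axis. On those rays the Hankel functions reduce (up to explicit constants) to $K_\nu$, and a rescaling $y \mapsto n\pi x$ then matches the resulting formula with \eqref{Pntildeorthogonal}.

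First, for each $s>0$ I write
\[
0 = \int_0^\infty P_n(x;s)\, x^j\, J_\nu(x)\, e^{-sx}\, dx
= \tfrac12\!\int_0^\infty P_n(x;s)\, x^j H_\nu^{(1)}(x)\, e^{-sx}\, dx + \tfrac12\!\int_0^\infty P_n(x;s)\, x^j H_\nu^{(2)}(x)\, e^{-sx}\, dx.
\]
Since $0\le\nu<1$, the $x^{-\nu}$ singularity of $Y_\nu$ at the origin is integrable, so each summand converges on its own. Next, I deform the contour $(0,\infty)$ to $(0,i\infty)$ in the $H_\nu^{(1)}$-integral (through the first quadrant) and to $(0,-i\infty)$ in the $H_\nu^{(2)}$-integral (through the fourth quadrant). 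The closing quarter-arcs at infinity vanish because $H_\nu^{(1)}(x)\sim \sqrt{2/(\pi x)}\,e^{ix}$ decays exponentially throughout the open upper half-plane, beating the polynomial growth of $P_n(x;s)\,x^j$, and similarly for $H_\nu^{(2)}$ in the lower half-plane.

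On the imaginary semi-axes I apply the standard identities
\[
H_\nu^{(1)}(iy) = -\frac{2i}{\pi}\, e^{-i\pi\nu/2}\, K_\nu(y), \qquad
H_\nu^{(2)}(-iy) = \frac{2i}{\pi}\, e^{i\pi\nu/2}\, K_\nu(y), \qquad y>0.
\]
The exponential decay of $K_\nu(y)$ at infinity makes the deformed integrals convergent without the regularizer, so I may send $s\to 0^+$ inside, obtaining
\[
e^{-i\pi\nu/2}\!\int_0^\infty P_n(iy)\,(iy)^j K_\nu(y)\, dy + e^{i\pi\nu/2}\!\int_0^\infty P_n(-iy)\,(-iy)^j K_\nu(y)\, dy = 0.
\]
Finally, in the first integral I set $y=n\pi t$ with $t>0$, and in the second $y=-n\pi x$ with $x<0$; using $P_n(in\pi z)=(in\pi)^n \widetilde P_n(z)$, the two pieces combine into a single integral over $\mathbb{R}$ equal to $i^j(n\pi)^{j+1}(in\pi)^n$ times the left-hand side of \eqref{Pntildeorthogonal}. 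Dividing by the nonzero prefactor yields the claim.

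The main obstacle is the rigorous justification of the contour rotation in the middle step: the exponential factor $e^{-sx}$ provides decay only along $\mathbb{R}_+$, so the closure of the contour at infinity must be carried entirely by the exponential decay of $H_\nu^{(k)}$ inside its half-plane of analyticity, and one must also argue that the small arc near the origin contributes nothing in the limit. Both points use the hypothesis $\nu<1$: it guarantees integrability at the origin when the Hankel decomposition is applied, and it controls the behavior of the deformed integrand across the full quarter-plane on which the rotation is performed.
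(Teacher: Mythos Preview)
Your argument is correct, and the underlying idea is the same as the paper's: decompose $J_\nu$ into two pieces that extend analytically into the upper and lower half-planes respectively (you use $J_\nu=\tfrac12(H_\nu^{(1)}+H_\nu^{(2)})$, the paper uses the equivalent connection formula \eqref{connection} in terms of $K_\nu(\pm iz)$), rotate the contour onto the imaginary axis where each piece becomes $K_\nu$, pass to the limit $s\to0^+$, and rescale by $in\pi$.

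The difference is in the packaging. The paper does not manipulate the orthogonality integrals directly; instead it encodes the contour rotation as the Riemann--Hilbert transformation $Y^{(s)}\mapsto X^{(s)}$ in \eqref{Xs}: the factorization \eqref{factorization} of the jump matrix is precisely what moves the jump from $(0,\infty)$ to $i\mathbb R$, which is your deformation in RH language. The limit $s\to0^+$ and the rescaling then become the further transformations $X^{(s)}\to X\to U$, and the proposition drops out by recognizing RH~problem~\ref{RHforU} as the standard Fokas--Its--Kitaev problem for the weight \eqref{eq:weightnu}. Your route is more elementary and self-contained, and makes the role of the hypothesis $\nu<1$ (integrability of $y^{-\nu}$ at the origin) completely explicit; the paper's route has the advantage that it manufactures the RH problem for $U$ along the way, which is the actual starting point for the steepest-descent analysis that occupies the rest of the paper.
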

The function $K_{\nu}$ in \eqref{eq:weightnu} is the modified Bessel function of second kind of order $\nu$.
Proposition \ref{prop:Pntildeorthogonal} is proved in Section \ref{subsec:second}.

Since $K_{\nu}(x) \sim x^{-\nu}$ as $x \to 0$, see for instance \cite[10.30.2]{DLMF}, 
the condition $\nu < 1$ is necessary
for the convergence of the integral \eqref{Pntildeorthogonal} with $j=0$. In case $\nu=0$
then \eqref{eq:weightnu} is the real and positive weight function $K_{0}(n\pi |x|)$. Then $\widetilde{P}_n$
has all its zeros on the real line, and consequently  the zeros of $P_n$ are on the imaginary axis.
This way we recover the result of \cite{AH}.

For $\nu = 1/2$, the modified Bessel function reduces to an elementary function and the 
weight function \eqref{eq:weightnu} is
\begin{equation} \label{weightnu12} 
	\begin{cases} e^{\pi i/4}  (2n |x|)^{-1/2}  e^{-n \pi |x|}, & \quad x < 0, \\
	e^{-\pi i/4}  (2n |x|)^{-1/2} e^{- n \pi |x|},  & \quad x > 0.
	 \end{cases} \end{equation}
The weight \eqref{weightnu12} has three components:
\begin{itemize}
\item An exponential varying weight $e^{-n \pi |x|}$ with a potential function $V(x) = \pi |x|$ that is
convex but non-smooth at the origin.
\item A square root singularity at the origin $|x|^{-1/2}$.
\item A complex phase factor $e^{\pm \pi i/4}$  with a jump discontinuity at the origin.
\end{itemize}

The exponential varying weight determines the limiting density 
\eqref{psidensity2}. Indeed we have that $\psi(x) dx$ is the minimizer of
the logarithmic energy in external field $\pi |x|$ among probability measures
on the real line, see \cite{ST}, and as is well-known, the zeros of the orthogonal polynomials
with varying weight function $e^{-n \pi |x|}$ have $\psi$ as limiting density.  
This continues to be the case for the weights \eqref{eq:weightnu} as is claimed by Theorem \ref{Th0}.
A Riemann-Hilbert analysis for the weight $e^{-n \pi |x|}$, and other Freud weights, is in \cite{KMcL}.

The square root singularity and the jump discontinuity are known as Fisher-Hartwig singularities
in the theory of Toeplitz determinants. There is much recent progress in the understanding of
Toeplitz and Hankel determinants with such singularities \cite{DIK2}. This is also
related to the asymptotics of the corresponding orthogonal polynomials, whose local behavior 
near a Fisher-Hartwig singularity is described with the aid of confluent hypergeometric functions,
see the works of Deift, Its and Krasovsky \cite{DIK, IK} and also \cite{FMFS,FMFS2}.

We are facing the complication that the Fisher-Hartwig singularity is combined with a logarithmic
divergence of the density $\psi$ at the origin, see \eqref{psidensity2}.
In our Riemann-Hilbert analysis we were not able to construct a local parametrix with special functions, 
and we had to resort to an existence proof, where we used ideas from \cite{KMcL} and \cite{BB},
although our proof is at the technical level different from either of these papers.

\subsection{Asymptotic behavior}
 
Away from the region  where the zeros of $P_n(z)$ lie, the asymptotic behavior is governed by the $g$ function 
associated with the limiting density $\psi$, that is, 
\begin{equation} \label{gfunction}
 g(z)=\int_{-1}^1 \log(z-x)\psi(x)dx, 
\end{equation}
where the density $\psi$ is given by \eqref{psidensity2}. 
Then $g$ is defined and analytic for $z\in \mathbb{C} \setminus(-\infty,1]$.

We prove the following asymptotic behavior of $P_n$ in the region away from the zeros. 
We continue to use $\epsilon_n$ as defined in \eqref{epsilonn}.

\begin{theorem}\label{Th1}
 Let $0<\nu\leq 1/2$. Then the polynomial $P_n$ exists and is unique for sufficiently large $n$. 
Moreover, the polynomial $\widetilde{P}_n$ given by \eqref{tildePn} has the following behavior as $n\to\infty$:
\begin{equation}\label{asymp:Pn:outer}
	\widetilde{P}_n(z)=e^{ng(z)}
\left(\frac{z(z+(z^2-1)^{1/2})}{2(z^2-1)}\right)^{1/4}\left(\frac{(z^2-1)^{1/2}-i}{(z^2-1)^{1/2}+i}\right)^{-\nu/4}
\left(1+\mathcal{O}(\epsilon_n)\right),
\end{equation}
uniformly for $z$ in compact subsets of $\mathbb{C}\setminus [-1,1]$. Here the
branch of the function $(z^2-1)^{1/2}$ is taken which is analytic in 
$\mathbb{C}\setminus[-1,1]$ and positive for real $z > 1$.
\end{theorem}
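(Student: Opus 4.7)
The plan is to perform a Deift--Zhou steepest-descent analysis of the Riemann--Hilbert problem associated with $\widetilde P_n$, via the orthogonality of Proposition \ref{prop:Pntildeorthogonal}. Let $Y$ be the Fokas--Its--Kitaev $2\times 2$ matrix built from $\widetilde P_n$ and its Cauchy transform against the weight $w_n(x) = e^{-\sgn(x)\nu\pi i/2}K_\nu(n\pi|x|)$ on $\mathbb{R}$. Because $w_n$ is not positive, solvability of the $Y$-problem is a priori unclear; it will be recovered a posteriori from the fact that the final transformed matrix is close to the identity. The first transformation is the standard $g$-function normalization
$$T(z) = e^{-n\ell\sigma_3/2}\,Y(z)\,e^{-n(g(z)-\ell/2)\sigma_3},$$
with $g$ as in \eqref{gfunction} and $\ell$ the Lagrange constant of the equilibrium problem on $[-1,1]$ in external field $V(x)=\pi|x|$, whose minimizer is $\psi$. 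The variational identities for $g$ together with the large-argument asymptotics $K_\nu(n\pi|x|) = (2n|x|)^{-1/2}e^{-n\pi|x|}(1+\mathcal{O}(1/n))$, valid uniformly for $|x|$ bounded away from $0$, turn the $T$-jump into a factorizable oscillatory jump on $(-1,1)$ and an exponentially small jump on $\mathbb{R}\setminus[-1,1]$. Opening lenses $\Sigma^\pm$ above and below $(-1,1)$ in the usual way produces a matrix $S$ whose jumps on the lens contours are exponentially close to the identity off any fixed neighbourhood of $\{-1,0,1\}$.

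Next one constructs the outer (global) parametrix $N(z)$ as the solution of the model RH problem on $\mathbb{C}\setminus[-1,1]$ with the standard anti-diagonal jump built from the ``bare'' remnant $w_\infty(x) = e^{-\sgn(x)\nu\pi i/2}(2n|x|)^{-1/2}$ of $w_n$ after the $g$-transformation, and with $N(\infty)=I$. The Szegő function of $w_\infty$ splits into the classical factor for the root singularity $|x|^{-1/2}$ and a second factor that accounts for the jump discontinuity $e^{\pm\nu\pi i/2}$, a Fisher--Hartwig singularity with both root and jump components at the origin. A direct computation in the Joukowski variable $u = z+(z^2-1)^{1/2}$ yields
$$N_{11}(z) = \left(\frac{z(z+(z^2-1)^{1/2})}{2(z^2-1)}\right)^{1/4}\left(\frac{(z^2-1)^{1/2}-i}{(z^2-1)^{1/2}+i}\right)^{-\nu/4},$$
exactly the prefactor appearing in \eqref{asymp:Pn:outer}, any overall $n$-dependent constant being absorbed in the choice of $\ell$.

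At the soft edges $\pm 1$ one glues in the standard Airy parametrices, with matching errors of order $1/n$. At the origin one invokes the existence theorem for a local parametrix $P^{(0)}$ proved separately in the paper; choosing the disk $D(0,r_n)$ of radius $r_n\sim (\log n)^{-1}$ balances the logarithmic blow-up of $\psi$ at $0$ against the factor $n$ in $g$ and produces the matching bound $P^{(0)}N^{-1} = I+\mathcal{O}(\epsilon_n)$ on $\partial D(0,r_n)$, with $\epsilon_n$ as in \eqref{epsilonn}. Setting $R = S P^{-1}$ with $P$ equal to $N$ off the disks and to the local parametrices inside yields a small-norm RH problem on a contour of circles and remaining lens arcs, with all jumps of the form $I+\mathcal{O}(\epsilon_n)$; standard small-norm theory then gives $R = I+\mathcal{O}(\epsilon_n)$ uniformly, which in particular establishes solvability of the $Y$-problem and hence existence and uniqueness of $\widetilde P_n$ for all large $n$. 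Unwinding the transformations on compacts of $\mathbb{C}\setminus[-1,1]$, where $S = T$ once the lens and the disks are chosen clear of the compact set, the $(1,1)$ entry of $Y$ gives $\widetilde P_n(z) = e^{ng(z)}\,N_{11}(z)\,(1+\mathcal{O}(\epsilon_n))$, which is \eqref{asymp:Pn:outer}. The main obstacle throughout is the parametrix at the origin: the combination of a Fisher--Hartwig singularity with the logarithmic divergence of $\psi$ prevents a direct special-function construction of Deift--Its--Krasovsky type, and one must argue existence only (following the lines of \cite{KMcL,BB}); it is precisely this step that restricts the range to $\nu\le 1/2$ and dictates the $(\log n)^{-\nu-1/2}$ refinement in $\epsilon_n$.
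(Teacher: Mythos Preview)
Your outline follows the same steepest-descent scheme as the paper and reaches \eqref{asymp:Pn:outer} by the same unwinding. Two points of divergence are worth flagging. First, the paper builds the global parametrix $N$ with the \emph{exact} $n$-dependent Szeg\H{o} function $D_{1,n}$ for $W_n(x)=\sqrt{2n}\,K_\nu(n\pi|x|)e^{n\pi|x|}$, so that $Q=SN^{-1}$ has \emph{no} jump on $(-1,1)$; only afterwards (Lemma~\ref{lem:D1nlimit}) is $D_{1,n}$ replaced by its limit $\bigl((z+(z^2-1)^{1/2})/z\bigr)^{1/4}$, with error $\mathcal{O}(\log n/n)=o(\epsilon_n)$. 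Your choice of the asymptotic weight $|x|^{-1/2}$ directly in $N$ would leave a residual jump of size $\mathcal{O}(1/(n|x|))$ on $(-1,1)$, not uniformly small near $0$, which then has to be pushed into the local analysis. Second, and more importantly, the paper's local parametrix at the origin is built in a \emph{fixed} disk $|z|<3\varepsilon$, not a shrinking one. The rate $\epsilon_n$ does not come from balancing a radius $r_n\sim(\log n)^{-1}$ against the logarithmic blow-up of $\psi$; it is the output of the existence proof (Proposition~\ref{propo8}, Lemma~\ref{lem:Phat}), where one reduces to coupled Cauchy operators $K_1,K_2$ with $\|K_1K_2\|=\mathcal{O}((\log n)^{-2\nu})$, and the dominant entry is $|\widehat P_{21}|=\mathcal{O}\bigl(n^{\nu-1/2}(\log n)^{-\nu-1/2}\bigr)=\mathcal{O}(\epsilon_n)$. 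Since you explicitly invoke the paper's existence theorem, you should adopt the fixed-disk framework in which it is stated; the shrinking-disk heuristic you give for $\epsilon_n$ is not the actual mechanism.
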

 
In a neighborhood of $(-1,1)$ we find oscillatory behavior of the polynomials $\widetilde{P}_n$ as  $n\to\infty$.
We state the asymptotic formula \eqref{asymp:Pn:inner} for $\Re z \geq 0$ only. There is an analogous formula for $\Re z < 0$. 
This follows from the fact that the polynomial $P_n$ has real coefficients, as all the moments in
the determinantal formula \eqref{Pndet} are real. Thus $P_n(\overline{z}) = \overline{P_n(z)}$,
and so  
\[ \widetilde{P}_n(-\overline{z}) = \overline{\widetilde{P}_n(z)}, \qquad z \in \mathbb C. \]

To describe the behavior near the interval, we need the analytic continuation of the density \eqref{psidensity2},
which we also denote by $\psi$,
\begin{equation} \label{complexpsi} 
	\psi(z) =  \frac{1}{\pi} \log \frac{1+ (1-z^2)^{1/2}}{z},  \qquad  \Re z > 0, 
	\end{equation}
	which is defined and analytic in $\{ z \mid \Re z > 0\} \setminus [1, \infty)$.
For $\Re z > 0$ with  $z \not\in [1, \infty)$ we also define
\begin{equation} \label{defthetan} 
	\theta_n(z) = n \pi \int_z^1 \psi(s) ds + \frac{1}{4} \arccos z - \frac{\pi}{4}.
	\end{equation}

\begin{theorem} \label{Th3} Let $0 < \nu \leq 1/2$.
There is an open neighborhood $E$ of $(-1,1)$ such that for $z \in E \setminus \{0\}$ with $\Re z \geq 0$ we have
\begin{multline} \label{asymp:Pn:inner}
	\widetilde{P}_n(z)=  \frac{z^{1/4} e^{\frac{\nu \pi i}{4}} e^{n \pi z/2}}{2^{1/4} (2e)^n (1-z^2)^{1/4}} 
	\left[\exp\left( \frac{\nu \pi}{2} \psi(z) + i \theta_n(z)\right) 
	\left( 1 + \mathcal{O}\left(\frac{\log n}{n}\right)\right)  \right. \\
	\left.	+ \exp\left( - \frac{\nu \pi}{2} \psi(z) - i \theta_n(z) \right) 
		\left( 1 + \mathcal{O}\left(\frac{\log n}{n}\right)\right)  + \mathcal{O}(\epsilon_n) \right] 
	\end{multline}
	as $n \to \infty$, 
with $\psi$ and $\theta_n$ given by \eqref{complexpsi} and \eqref{defthetan}.
The asymptotic expansion \eqref{asymp:Pn:inner} is uniform for $z \in E$ with $\Re z \geq 0$ and
$|z-1| > \delta$, $|z| > \delta$, for every $\delta > 0$.
\end{theorem}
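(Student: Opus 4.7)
The plan is to extend the Deift--Zhou steepest descent analysis used for Theorem \ref{Th1} from the outer region into the lens region around $(-1,1)$. Theorem \ref{Th1} was obtained by undoing the chain of transformations $Y \to T \to S \to R$ outside the opened lens, where $S = RN$ with $N$ the global parametrix. For Theorem \ref{Th3} I would undo the same chain for $z$ inside the upper (or lower) part of the lens, where the presence of one additional triangular factor produces the two-term oscillatory structure of \eqref{asymp:Pn:inner}.

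For $z$ in the set $E \setminus \{0\}$ with $\Re z \geq 0$, $|z| > \delta$ and $|z-1| > \delta$, the uniform estimate $R(z) = I + \mathcal{O}(\epsilon_n)$ from the proof of Theorem \ref{Th1} applies, since $z$ remains outside the shrinking disks around $0$ and $\pm 1$. In the upper lens one has
\[
T(z) = S(z) \begin{pmatrix} 1 & 0 \\ w(z)^{-1}e^{-2n\phi(z)} & 1 \end{pmatrix},
\]
with $w$ the analytic continuation across $(0,1)$ of the weight \eqref{eq:weightnu} and $\phi(z) = g(z) - (\pi z + \ell)/2$ the standard phase function in the right half plane, $\ell$ being the Robin constant of the equilibrium problem. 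Combining $\widetilde{P}_n = Y_{11} = e^{ng}T_{11}$ with the jump $N_+ = N_- \begin{pmatrix} 0 & w \\ -w^{-1} & 0 \end{pmatrix}$ of the global parametrix (which yields $N_{12,+} = w\,N_{11,-}$) and the identity $g_+ - 2\phi_+ = g_-$ gives, on the upper side of $(0,1)$,
\[
\widetilde{P}_n(z) = e^{ng_+(z)}N_{11,+}(z) + e^{ng_-(z)}N_{11,-}(z) + \mathcal{O}(\epsilon_n)\,e^{ng_+(z)};
\]
this is the sum of boundary values of $e^{ng}N_{11}$ from the two sides of the cut (so an entire function, as required), and it extends by analytic continuation to $z \in E$.

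Next I would identify each factor. The equilibrium identity $g_\pm(x) = \tfrac12(\pi x + \ell) \pm i\pi\int_x^1 \psi(s)\,ds$ with Robin constant $\ell = -2\log(2e)$ produces the prefactor $e^{n\pi z/2}/(2e)^n$ and the oscillatory phases $e^{\pm i n\pi\int_z^1\psi}$. The boundary values on $(0,1)$ of the first factor of $N_{11}$ in Theorem \ref{Th1} are $z^{1/4}/(2^{1/4}(1-z^2)^{1/4}) \cdot e^{\pm i(\arccos z - \pi)/4}$, supplying the algebraic prefactor and the $\arccos(z)/4 - \pi/4$ piece of $\theta_n$. For the second factor $h(z)^{-\nu/4}$ with $h(z) = ((z^2-1)^{1/2}-i)/((z^2-1)^{1/2}+i)$, the expansion $h(z) \sim -z^2/4$ near $z=0$ shows that the branch of $\log h$ (fixed by $\log h(\infty) = 0$) winds by $\mp \pi$ across the origin, so $\log h|_\pm(x) = \pm \log r(x) \mp i\pi$ with $r(x) = ((1-\sqrt{1-x^2})/x)^2 = e^{-2\pi\psi(x)}$ by \eqref{complexpsi}. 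Hence $h^{-\nu/4}|_\pm(x) = e^{\pm \nu\pi\psi(x)/2}\,e^{\nu\pi i/4}$, so that the common phase $e^{\nu\pi i/4}$ factors out. Assembling everything yields the two exponentials of \eqref{asymp:Pn:inner} with the stated common prefactor. The multiplicative $\mathcal{O}(\log n/n)$ errors come from the local Airy parametrices at $\pm 1$ and from subleading terms in the asymptotic expansion of $K_\nu$; the additive $\mathcal{O}(\epsilon_n)$ is inherited from $R$. The formula for $\Re z < 0$ follows from the symmetry $\widetilde{P}_n(-\bar z) = \overline{\widetilde{P}_n(z)}$.

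The main obstacle is the careful branch-tracking of the multi-valued factors of $N_{11}$. Showing that $h(z)^{-\nu/4}$ on the two boundaries of $(0,1)$ yields the common factor $e^{\nu\pi i/4}$ together with $e^{\pm \nu\pi\psi(z)/2}$ requires tracing the branch of $\log h$ around the branch point at $z = 0$, where $h$ vanishes quadratically. A secondary technical point is the analytic continuation of $w(z)$ off the real axis in a neighborhood wide enough to accommodate the opened lens, in view of the logarithmic branch singularity of $K_\nu$ at the origin and the different natural extensions of the external field $\pi|x|$ in the left and right half planes.
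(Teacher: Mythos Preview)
Your proposal is correct and follows the same overall strategy as the paper: undo the transformations inside the lens, write $\widetilde P_n$ in terms of entries of the global parametrix $N$ plus an $\mathcal O(\epsilon_n)$ remainder coming from $R$, and then identify each factor explicitly. Your reformulation as $e^{ng_+}N_{11,+}+e^{ng_-}N_{11,-}$ via the jump relation $N_{12,+}=wN_{11,-}$ is a clean way to see the two-term structure, and your branch computation for $h^{-\nu/4}$ is correct.

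The paper's execution differs in bookkeeping. Rather than taking boundary values of the formula \eqref{asymp:Pn:outer} and tracking branches of $h(z)^{-\nu/4}$ by hand, the paper keeps $N_{11}$ and $N_{12}$ separate and uses the Szeg\H{o} decomposition $N=D_\infty^{\sigma_3}N_0(D_1D_2)^{-\sigma_3}$. The branch issue you flag as ``the main obstacle'' is handled there by the lemma $\log D_2(z)=-\tfrac{\nu\pi}{2}\psi(z)-\tfrac{\nu\pi i}{4}$ for $\Re z>0$, $\Im z>0$, which directly produces the common phase $e^{\nu\pi i/4}$ and the factors $e^{\pm\nu\pi\psi/2}$ without any winding argument. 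The paper also carries $W_n$ and $D_1$ together in the combination $W_n^{1/2}/D_1=f^{-1/4}(1+\mathcal O(\log n/n))$, which avoids having to justify that the $\mathcal O(\log n/n)$ estimate for $D_1$ alone (Lemma~\ref{lem:D1nlimit}, stated only off $[-1,1]$) extends to boundary values on $(0,1)$.

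One small correction: the multiplicative $\mathcal O(\log n/n)$ errors do not come from the Airy parametrices (those contribute $\mathcal O(1/n)$ and only inside the disks $D(\pm1,\delta)$, which are excluded from the region under consideration). They arise solely from the approximations $W_n(z)\sim z^{-1/2}$ and $D_{1,n}(z)\sim((z+(z^2-1)^{1/2})/z)^{1/4}$, i.e.\ from the subleading terms in the $K_\nu$ asymptotics.
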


The two terms $\exp\left( \frac{\nu \pi}{2} \psi(z) + i \theta_n(z)\right)$ and
$\exp\left(- \frac{\nu \pi}{2} \psi(z) - i \theta_n(z)\right)$   in \eqref{asymp:Pn:inner} 
describe the oscillatory behavior near the interval as well as the leading order
behavior of the zeros. Zeros  can only happen when these two terms are of 
comparable absolute value so that cancellations can take place. When $\nu = 0$ this
happens for real $z \in E$. However, for $\nu > 0$ this does not happen for real $z$,
but near the line $\Im z = -\frac{\nu}{2n}$, as we will show in Section \ref{section44}. 
This leads to Theorem \ref{Th2}.

\subsection{Outline of the paper}

The structure of the rest of the paper is as follows. In Section \ref{Section_RH} we 
state the Riemann--Hilbert problem $Y^{(s)}$ for $P_n(x;s)$ with $s > 0$, and we make
an initial transformation
\begin{equation*}
	Y^{(s)} \mapsto X^{(s)}.
	\end{equation*}
In the RH problem for $X^{(s)}$ we can take the limit $s \to 0+$ which leads to a RH problem for $X$,
that characterizes the polynomial $P_n(x)$.	Then we carry out the further transformations
\begin{equation*}
	X \mapsto U \mapsto T \mapsto S \mapsto Q \mapsto R
\end{equation*}
of the Deift--Zhou nonlinear steepest descent method \cite{Deift,DKMVZ}. The step  $X\mapsto U$ is rotation and scaling, 
to translate the problem to the interval $[-1,1]$. This leads to the polynomials $\widetilde{P}_n$
and the proof of Proposition \ref{prop:Pntildeorthogonal}.
 The normalization at $\infty$ in the 
$U\mapsto T$ step is carried out using an equilibrium problem with a Freud weight 
$w(x)=e^{-n V(x)}$, where $V(x)=\pi|x|$ is the pointwise limit as $n\to\infty$ of the varying weight 
\begin{equation*}
	V_n(x)=-\frac{1}{n}\log K_{\nu}(n\pi |x|). 
\end{equation*}

The construction of the global parametrix $N$ on the interval $[-1,1]$ involves two Szeg\H{o} 
functions $D_1(z)$ and $D_2(z)$, that correspond respectively to an  
algebraic singularity of the weight function at the origin and to a complex phase factor. 
The local parametrices near the endpoints $\pm 1$ involve Airy functions, since the density $\psi(x)$ in 
\eqref{psidensity2} behaves like a square root in a neighborhood of these endpoints. 
The main difficulty of the analysis is the construction of a 
local parametrix in a neighborhood of the origin, and the reason is the lack of analyticity of the weight function 
$V_n(x)$ in that neighborhood. In this paper, we reduce the jump matrices in that local analysis to almost constant 
in a disk around $0$ and then use a small norm argument in $L^2\cap L^{\infty}$ to prove existence of a solution to 
this local RH problem. In this respect, the analysis is similar to the one presented by Kriecherbauer and
 McLaughlin in \cite{KMcL}. Also, the same limiting potential $V(x)$ appears in the work of Bleher and Bothner in 
 \cite{BB}. Another example 
of non--analytic weight function was considered in the work of 
Foulqui\'e, Mart\'inez--Finkelshtein and Sousa, see \cite{FMFS} and \cite{FMFS2},
 although in this case the local parametrix at the origin is explicitly given in terms of 
 confluent hypergeometric functions. 

Finally, in Section \ref{proofs} we follow the transformations both outside and inside the lens, but away 
from the origin, to get the asymptotic information about $P_n(z)$ and its zeros. This proves 
Theorem \ref{Th1} and \ref{Th3}. Theorem \ref{Th0} follows from Theorem \ref{Th1} and
Theorem \ref{Th3} is a consequence of  \ref{Th2}.

\section{Riemann--Hilbert problem}\label{Section_RH}

\subsection{RH problem for polynomials $P_n(x;s)$}
We let $\nu > 0$ and $s > 0$. Orthogonal polynomials are characterized by a matrix valued
Riemann-Hilbert problem as was first shown by Fokas, Its, and Kitaev \cite{FIK}, see also \cite{Deift}.
This characterization does not use the fact that the orthogonality weight is non-negative, and it
therefore also applies to oscillatory weights. Thus the polynomial $P_n(x;s)$ satisfying \eqref{Pnxs} 
is characterized by the following Riemann-Hilbert problem:
\begin{rhp}\label{RHforY}
$Y^{(s)} :\mathbb{C}\setminus [0,\infty) \to \mathbb{C}^{2\times 2}$ is a $2 \times 2$ matrix
valued function that satisfies:
\begin{itemize}
\item[1)] $Y^{(s)}$ is analytic in $\mathbb{C}\setminus [0,\infty)$.
\item[2)] $Y^{(s)}$ satisfies the jump condition
\begin{equation*} 
		Y^{(s)}_{+}(x)= Y^{(s)}_{-}(x) \begin{pmatrix} 1 & J_{\nu}(x)e^{-sx} \\ 0 & 1 \end{pmatrix} \quad \text{on } (0,\infty).
		\end{equation*}
\item[3)] As $z \to \infty$,
\begin{equation}\label{asymp:Y}
	Y^{(s)}(z)=(I+\mathcal{O}(1/z))\begin{pmatrix} z^{n} & 0 \\ 0 & z^{-n} \end{pmatrix},
\end{equation}
where $I$ denotes the $2\times 2$ identity matrix.
\item[4)] $Y^{(s)}(z)$ remains bounded as $z \to 0$.
\end{itemize}
\end{rhp}
The polynomial $P_n(x;s)$ exists and is unique if and only if the RH problem has
a unique solution. In that case we have
\begin{equation} \label{Pn-and-Y11} 
	P_n(x;s) = Y^{(s)}_{11}(x). \end{equation}

\subsection{First transformation}
In the first transformation we use the following connection formula between $J_{\nu}$  
and the modified Bessel function $K_{\nu}$ of the second kind:
\begin{equation}\label{connection}
J_\nu(z)=\frac{1}{\pi i}\left(e^{-\frac{\nu \pi i}{2}}K_\nu(-iz)-e^{\frac{\nu \pi i}{2}}K_\nu(iz)\right), \qquad
|\arg z|\leq \frac{\pi}{2},
\end{equation}
see for instance \cite[formula 10.27.9]{DLMF}. Alternatively, the Bessel function can be written in terms of Hankel functions
as in \cite[formula 10.4.4]{DLMF}.

The formula \eqref{connection} leads to the following factorization of the jump matrix:
\begin{equation} \label{factorization}
    \begin{pmatrix} 1 & J_{\nu}(x)e^{-sx} \\ 0 & 1 \end{pmatrix}
= \begin{pmatrix} 1 & -\frac{e^{\frac{\nu \pi i}{2}}}{\pi i}K_\nu(ix)e^{-sx} \\ 0 & 1 \end{pmatrix}
\begin{pmatrix} 1 & \frac{e^{-\frac{\nu \pi i}{2}}}{\pi i}K_\nu(-ix)e^{-sx} \\ 0 & 1 \end{pmatrix}.
\end{equation}

We define the new matrix valued function $X^{(s)}$ by
\begin{equation}\label{Xs}
X^{(s)}(z)=\begin{cases} \begin{pmatrix} 1 & 0 \\ 0 & (\pi i)^{-1} \end{pmatrix}  
		Y^{(s)}(z)\begin{pmatrix} 1 & - e^{-\frac{\nu \pi i}{2}} K_\nu(-iz)e^{-sz} \\ 0 & \pi i \end{pmatrix},\quad
      &\text{if } 0 <\arg z < \frac{\pi}{2}, \\
      \begin{pmatrix} 1 & 0 \\ 0 & (\pi i)^{-1} \end{pmatrix}
			Y^{(s)}(z)\begin{pmatrix} 1 & - e^{\frac{\nu \pi i}{2}} K_\nu(iz)e^{-sz} \\ 0 & \pi i \end{pmatrix},\quad
      &\text{if }  -\frac{\pi}{2} <\arg z < 0, \\
       \begin{pmatrix} 1 & 0 \\ 0 & (\pi i)^{-1} \end{pmatrix}
			Y^{(s)}(z) \begin{pmatrix} 1 & 0 \\ 0 & \pi i \end{pmatrix}, & \text{elsewhere}.
     \end{cases}
\end{equation}
Then $X^{(s)}$ has an analytic continuation across the positive real axis, due to the factorization \eqref{factorization}.
Thus $X^{(s)}$ is defined and analytic in the complex plane except for the imaginary axis, and it satisfies the following RH problem:
\begin{rhp}\label{RHforX0}
\begin{itemize}
\item[1)] $X^{(s)}$ is analytic in $\mathbb{C}\setminus i\mathbb{R}$.
\item[2)] $X^{(s)}$ satisfies the jump condition (the imaginary axis is oriented from bottom to top)
\begin{equation} \label{jumps:X1}
X^{(s)}_{+}(x)=X^{(s)}_{-}(x) 
\begin{cases}
\begin{pmatrix} 1 &  e^{-\frac{\nu \pi i}{2}} K_\nu(-ix) e^{-sx} \\ 0 & 1 \end{pmatrix}, &
     \text{ for } x \in (0,+i\infty),\\
 \begin{pmatrix} 1 &  e^{\frac{\nu \pi i}{2}} K_\nu(ix) e^{-sx} \\ 0 & 1 \end{pmatrix}, &
     \text{ for }x \in (-i\infty,0).
     \end{cases}
\end{equation}
\item[3)] As $z\rightarrow\infty$,
\begin{equation} \label{asymp:X1}
    X^{(s)}(z)=(I+\mathcal{O}(1/z))\begin{pmatrix} z^{n} & 0 \\ 0 & z^{-n} \end{pmatrix}.
    \end{equation}
\item[4)] $X^{(s)}(z)$ remains bounded as $z \to 0$ with $\Re z < 0$, and 
\begin{equation}  \label{near0:X1}
	X^{(s)}(z)=\begin{pmatrix}
	\mathcal{O}(1) & \mathcal{O}(z^{-\nu}) \\
	\mathcal{O}(1) & \mathcal{O}(z^{-\nu}) \end{pmatrix}, \quad \text{ as } z \to 0 \text{ with } \Re z > 0.
\end{equation}
\end{itemize}
\end{rhp}
The asymptotic condition \eqref{asymp:X1} follows from \eqref{asymp:Y}, the definition \eqref{Xs}
and the fact that  
\begin{equation}\label{asympKv}
K_{\nu}(z)=\left(\frac{\pi}{2z}\right)^{1/2}
e^{-z}\left(1+\mathcal{O}(1/z)\right), \qquad  \text{as } z\to\infty, \quad
|\arg z|<\frac{3\pi}{2},
\end{equation}
see \cite[formula 10.40.2]{DLMF}.
The $\mathcal{O}(z^{-\nu})$ terms in \eqref{near0:X1} appear because of the behavior  
\begin{equation} \label{near0Kv} 
	K_{\nu}(z) \sim \frac{\Gamma(\nu)}{2^{1-\nu}} z^{-\nu}
	\end{equation}
as $z \to 0$ for $\nu>0$, see for instance \cite[formula 10.30.2]{DLMF}.
Note that by \eqref{Pn-and-Y11} and \eqref{Xs}
\begin{equation} \label{Pn-and-X11}	
	P_n(x;s) = X^{(s)}_{11}(x). 
\end{equation}

In the RH problem for $X^{(s)}$ we can take $s \to 0+$. Indeed, after setting $s=0$  in 
\eqref{jumps:X1}, the off-diagonal entries in the jump matrices still tend
to $0$ as $|x| \to \infty$ because of \eqref{asympKv}.
We put $s=0$ and we consider the following RH problem.
\begin{rhp} \label{RHforX}
We seek a function $X:\mathbb{C}\setminus i\mathbb{R} \to \mathbb{C}^{2\times 2}$ satisfying:
\begin{itemize}
\item[1)] $X$ is analytic in $\mathbb{C}\setminus i\mathbb{R}$.
\item[2)] $X$ satisfies the jump condition (the imaginary axis is oriented from bottom to top)
\begin{equation*}
X_{+}(x)=X_{-}(x)
\begin{cases}
   \begin{pmatrix} 1 &  e^{-\frac{\nu \pi i}{2}} K_\nu(-ix)  \\ 0 & 1 \end{pmatrix},
    & \text{ for } x \in (0,+i\infty),\\
    \begin{pmatrix} 1 &  e^{\frac{\nu \pi i}{2}} K_\nu(ix)  \\ 0 & 1 \end{pmatrix},
    & \text{ for }x \in (-i\infty,0).
\end{cases}
\end{equation*}
\item[3)] As $z\rightarrow\infty$,
\begin{equation*}
    X(z)=(I+\mathcal{O}(1/z))\begin{pmatrix} z^{n} & 0 \\ 0 & z^{-n} \end{pmatrix}.
    \end{equation*}
\item[4)] $X(z)$ remains bounded as $z \to 0$ with $\Re z < 0$, and 
\begin{equation*} 
	X(z)=\begin{pmatrix}
	\mathcal{O}(1) & \mathcal{O}(z^{-\nu}) \\
	\mathcal{O}(1) & \mathcal{O}(z^{-\nu}) \end{pmatrix}, \quad \text{ as } z \to 0 \text{ with } \Re z > 0.
\end{equation*}
\end{itemize}
\end{rhp}

If there is a unique solution then the $11$-entry is a monic polynomial of degree $n$, say $P_n$, and
\begin{equation} \label{PnX}
	P_n(x) = X_{11}(z) = \lim_{s \to 0+} X^{(s)}_{11}(z) =  \lim_{s\to 0+} P_n(x;s) 
\end{equation}
see \eqref{Pn-and-X11}. Thus  $P_n$ is the polynomial that we are interested in.

\subsection{Second transformation} \label{subsec:second}
We introduce a scaling and rotation $z\mapsto i\pi n z$ and our main interest is in the rescaled
polynomials $P_n(in\pi z)$ whose zeros will accumulate on the interval $[-1,1]$ as $n\to \infty$. More
precisely, we define $U$ as
\begin{equation}\label{U}
    U(z)=\begin{pmatrix} (in\pi)^{-n} & 0 \\ 0 & (in\pi)^{n} \end{pmatrix} X(in\pi z).
\end{equation}

From \eqref{U} and the RH problem \ref{RHforX}, we immediately obtain the following RH problem for $U(z)$:
\begin{rhp}\label{RHforU}
\begin{itemize}
\item[1)] $U$ is analytic in $\mathbb{C}\setminus \mathbb{R}$.
\item[2)] $U$ satisfies the jump condition
\begin{equation*}
U_{+}(x)=U_{-}(x)
\begin{cases}
\begin{pmatrix} 1 & e^{\nu\pi i/2} K_{\nu}(n\pi|x|) \\ 0 & 1 \end{pmatrix},
\quad x \in (-\infty,0), \\
\begin{pmatrix} 1 & e^{-\nu\pi i/2} K_{\nu}(n\pi|x|) \\ 0 & 1 \end{pmatrix},
\quad x \in (0,\infty).
\end{cases}
\end{equation*}
\item[3)] As $z\rightarrow\infty$,
\begin{equation*}
U(z)=(I+\mathcal{O}(1/z))\begin{pmatrix} z^{n} & 0 \\ 0 & z^{-n} \end{pmatrix}.
\end{equation*}
\item[4)] $U(z)$ remains bounded as $z \to 0 $ with $\Im z > 0$, and 
\begin{equation*} 
U(z)= \begin{pmatrix}
\mathcal{O}(1) & \mathcal{O}(z^{-\nu}) \\
\mathcal{O}(1) & \mathcal{O}(z^{-\nu}) \end{pmatrix} \quad \text{ as }  z \to 0 \text{ with } \Im z < 0. 
\end{equation*}
\end{itemize}
\end{rhp}
Note that by \eqref{PnX}, \eqref{U}, and \eqref{tildePn}
\begin{equation} \label{UnX}
	U_{11}(z) = (i n \pi)^{-n} X_{11}(in \pi z) = (in \pi)^{-n} P_{n}(in \pi z) = \widetilde{P}_n(z)
	\end{equation}
which is a monic polynomial of degree $n$. The zeros of $U_{11}(z)$ are obtained from the
zeros of $P_n$ by rotation over $90$ degrees in the clockwise direction and by dividing
by a factor $\pi n$.

We can now prove Proposition \ref{prop:Pntildeorthogonal}.
\begin{proof}[Proof of Proposition \ref{prop:Pntildeorthogonal}]

The RH problem for $U$ is  the RH problem for orthogonal polynomials on the
real line for the varying weight function $e^{\mp \nu\pi i/2} K_{\nu}(n \pi |x|)$ for $x \in \mathbb R^{\pm}$,
see \cite{Deift,DKMVZ,FIK}.
Because of the $e^{\mp \nu\pi i/2}$ factor, the weight function is not real on the real line, 
and it has a singularity at the origin because of the behavior \eqref{near0Kv}  of the $K_{\nu}$ function near $0$.
The singularity is integrable since $\nu < 1$, 
and so $U_{11} = \widetilde{P}_n$ is the monic polynomial of degree $n$ satisfying \eqref{Pntildeorthogonal}.
\end{proof}

\subsection{Equilibrium problem and third transformation}
In order to normalize the RH problem at infinity we make use of an equilibrium problem
with external field $V(x)=\pi|x|$. The equilibrium measure $\mu$ minimizes
the energy functional
\[ I(\mu) = \iint \log \frac{1}{|x-y|} d\mu(x)d\mu(y) + \int \pi |x| d\mu(x) \]
among all probability measures on $\mathbb{R}$. The minimizer is supported
on $[-1,1]$. It is absolutely
continuous with respect to the Lebesgue measure, $d\mu(x)=\psi(x)dx$, and has density 
\begin{equation*}
	\psi(x)=\frac1\pi \int_{|x|}^1 \frac{1}{\sqrt{s^2-x^2}}ds,
\end{equation*}
which corresponds to the case $\beta=1$ in \cite{KMcL}. The integral
can be evaluated explicitly and it gives the formula \eqref{psidensity2}.
Note that $\psi(x)$ grows like a logarithm at $x = 0$.

The $g$ function is defined in \eqref{gfunction}.
The boundary values $g_+(x)$ and $g_-(x)$ on the real axis satisfy
\begin{equation}\label{gpgm}
g_+(x)-g_-(x)=\begin{cases} 2\pi i,\quad &x\leq -1, \\
                                  2\pi i \ds \int_x^1 \psi(s)ds,\quad &-1 < x < 1,\\
                                  0,\quad & x\geq 1.
                    \end{cases}
\end{equation}

The Euler-Lagrange equations for the equilibrium problem imply that we have (see e.g.~\cite{Deift} or \cite{ST})
\begin{equation}\label{var2}
	g_{+}(x)+g_{-}(x)- \pi |x| \begin{cases} = \ell, & \quad x\in[-1,1], \\
		 <\ell, & \quad  x\in(-\infty,-1)\cup(1,\infty). \end{cases}
\end{equation}
with the constant $\ell$  (see Theorem IV.5.1 in \cite{ST} or formula (3.5) in  \cite{KMcL})
\begin{equation} \label{ell} 
	\ell = - 2 - 2 \log 2. 
	\end{equation}

A related function is
\begin{equation}  \label{phifunction} 
	\varphi(z) = g(z) - \frac{V(z)}{2} - \frac{\ell}{2}
\end{equation}
where 
\begin{equation} \label{Vfunction}
V(z)=\begin{cases}
     \pi z, & \qquad \Re z >0,\\
     -\pi z, & \qquad \Re z <0.
     \end{cases}
\end{equation}

The $\varphi$-function is analytic in $\mathbb{C}\setminus\left((-\infty,1]\cup i\mathbb{R}\right)$. 
For $x\in[-1,1]$ we have from the variational equation \eqref{var2}
\begin{equation}\label{phig}
\begin{aligned}
	\varphi_+(x) & = g_+(x)-\frac{V(x)}{2}-\frac{\ell}{2} = \frac{1}{2}(g_+(x)-g_-(x)), \\
	\varphi_-(x) & =-\varphi_+(x).
	\end{aligned}
\end{equation}

Thus $2\varphi$ gives an analytic extension of $g_+(x)-g_-(x)$ from $[-1,1]$ into the upper half
plane minus the imaginary axis, and of $g_-(x) - g_+(x)$ into the lower half plane minus the
imaginary axis. 
Note that $\varphi_{\pm}(x)$ is purely imaginary on $[-1,1]$, because of \eqref{gpgm}.

On the imaginary axis, the function $\varphi(z)$ is not analytic because of the 
discontinuity in $V(z)$. The boundary values of this weight function satisfy
\begin{equation*}
V_-(z)=V_+(z)+2\pi z,
\end{equation*}
and as a consequence, 
\begin{equation*}
\varphi_-(z)=\varphi_+(z)-\pi z, \qquad z \in i \mathbb R.
\end{equation*}
Here we take the orientation of the imaginary axis from bottom to top.

Now we are ready for the third transformation of the RH problem and we define the matrix valued function
\begin{equation} \label{T}
	T(z)=e^{-n\ell\sigma_3/2} (2n)^{\sigma_3/4} U(z)e^{-n(g(z)-\ell/2) \sigma_3} (2n)^{-\sigma_3/4},
\end{equation}
where $\sigma_3=\begin{pmatrix} 1 & 0\\0 &-1\end{pmatrix}$ is the third
Pauli matrix. 
We also write
\begin{equation} \label{eq:definition-W}
 W_n(x)= \sqrt{2n} K_{\nu}(n\pi |x|)e^{n\pi|x|}, \qquad x \in \mathbb R.
\end{equation}
Then from the above definitions and properties and from the RH problem \ref{RHforU} for $U$ we
find that $T$ satisfies the following Riemann--Hilbert problem.

\begin{rhp}\label{RHforT}
\begin{itemize}
\item[1)] $T$ is analytic in $\mathbb{C}\setminus \mathbb{R}$.
\item[2)] $T$ satisfies the jump conditions
\begin{equation*}
T_{+}(x)=T_{-}(x)
\begin{cases}
\begin{pmatrix} 1 & \, e^{\nu\pi i/2} W_n(x) e^{2n \varphi_+(x)} \\ 0 & 1\end{pmatrix},
\quad x\in(-\infty,-1),\\
\begin{pmatrix} e^{-2n\varphi_{+}(x)} & e^{\nu\pi i/2} W_n(x)  \\ 0 & e^{-2n\varphi_{-}(x)}\end{pmatrix},
\quad x\in(-1,0),\\
\begin{pmatrix} e^{-2n\varphi_{+}(x)} & e^{-\nu\pi i/2} W_n(x) \\ 0 & e^{-2n\varphi_{-}(x)}\end{pmatrix},
\quad x\in(0,1),\\
\begin{pmatrix} 1 & e^{-\nu\pi i/2} W_n(x) e^{2n \varphi_+(x)} \\ 0 & 1\end{pmatrix},
\quad x\in(1,\infty), 
\end{cases}
\end{equation*}
where $W_n$ is given in \eqref{eq:definition-W}.

\item[3)] As $z\rightarrow\infty$,
\begin{equation*}
T(z)=I+\mathcal{O}(1/z).
\end{equation*}
\item[4)] $T(z)$ remains bounded as $z \to 0$ with $\Im z > 0$, and 
\begin{equation} \label{at0:Tgeneral}
T(z)=\begin{pmatrix}
\mathcal{O}(1) & \mathcal{O}(z^{-\nu}) \\
\mathcal{O}(1) & \mathcal{O}(z^{-\nu}) \end{pmatrix},\quad \text{ as } z \to 0 \text{ with } \Im z < 0. 
\end{equation}
\end{itemize}
\end{rhp}

The off--diagonal elements in the jump matrices on $(-\infty,-1)$ and $(1,\infty)$ tend to $0$ at an
exponential rate, because of the Euler--Lagrange condition \eqref{var2}. 

\subsection{Fourth transformation}

The jump matrix on the interval $(-1,0)$ has a factorization
\begin{multline*}
\begin{pmatrix} e^{-2n\varphi_{+}(x)} & e^{\nu\pi i/2} W_n(x) \\ 
	        0 & e^{-2n\varphi_{-}(x)}\end{pmatrix}  \\
	= \begin{pmatrix} 1 & 0\\ \frac{e^{-\nu\pi i/ 2}}{W_n(x)}e^{-2n\varphi_-(x)} & 1\end{pmatrix}
\begin{pmatrix} 0& e^{\nu\pi i/2}W_n(x)\\ -\frac{e^{-\nu\pi i/2}}{W_n(x)} & 0\end{pmatrix}
\begin{pmatrix} 1 & 0\\ \frac{e^{-\nu\pi i/2}}{W_n(x)}e^{-2n\varphi_{+}(x)} &1\end{pmatrix},
\end{multline*}
while the jump matrix on  $(0,1)$ factorizes as
\begin{multline*}
\begin{pmatrix} e^{-2n\varphi_{+}(x)} & e^{-\nu\pi i/2}W_n(x)  \\ 
	        0 & e^{-2n\varphi_{-}(x)}\end{pmatrix} \\
	= \begin{pmatrix} 1 & 0\\ \frac{e^{\nu\pi i/2}}{W_n(x)} e^{-2n\varphi_{-}(x)}& 1\end{pmatrix}
\begin{pmatrix} 0& e^{-\nu\pi i/2}W_n(x)\\ -\frac{e^{\nu\pi i/2}}{W_n(x)} & 0\end{pmatrix}
\begin{pmatrix} 1 & 0\\ \frac{e^{\nu\pi i/2}}{W_n(x)} e^{-2n\varphi_{+}(x)} &1\end{pmatrix}.
\end{multline*}

In order to open the lens around $(-1,1)$,
 we need the analytic extension of the function $W_n$ from  \eqref{eq:definition-W}
to $\mathbb C \setminus i \mathbb R$, which we also denote by $W_n$,
\begin{equation}\label{analyticW}
	W_n(z)=\begin{cases} 
	\sqrt{2n} K_{\nu}(n\pi z)e^{n\pi z},& \qquad \Re z > 0, \\
  \sqrt{2n} K_{\nu}(-n\pi z)e^{-n\pi z},& \qquad \Re z < 0. 
\end{cases}
\end{equation}
Note that as $n \to \infty$, see  \eqref{asympKv} and \eqref{analyticW},
\begin{equation} \label{Wnestimate} W_n(z) = \begin{cases} z^{-1/2} ( 1 + \mathcal{O}(1/(nz)), & \Re z > 0, \\
	   (-z)^{-1/2}(1 + \mathcal{O}(1/(nz))), & \Re z < 0, 
		\end{cases} \end{equation}
which explains the factor $\sqrt{2n}$ that we introduced in \eqref{eq:definition-W} and \eqref{analyticW}. 
		
\begin{figure}
\centerline{\includegraphics{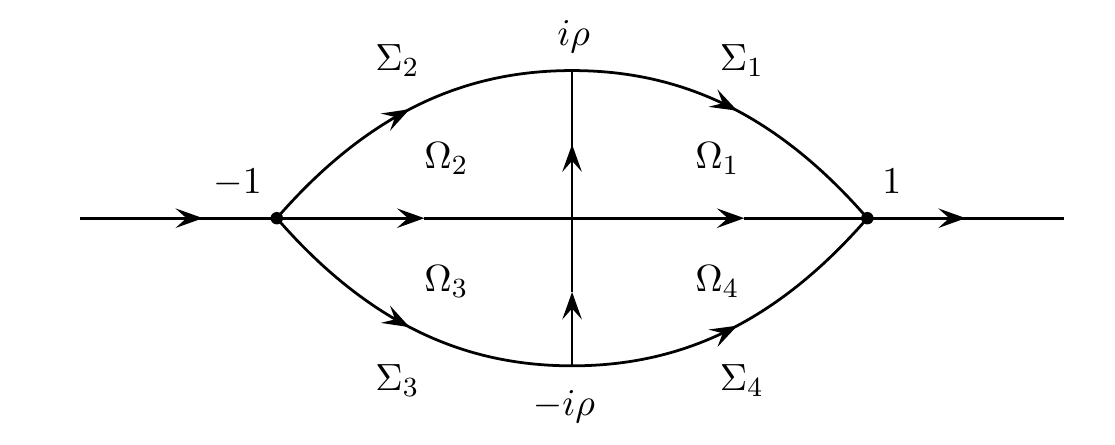}}
\caption{Opening of a lens around $[-1,1]$, and contour $\Sigma_S$ consisting of $\Sigma_1, \ldots, \Sigma_4$, 
the segment $(-i\rho,i\rho)$ and the real line.}
\label{fig_lens2}
\end{figure}

Next, we fix a number $\rho>0$ and we open a lens around $[-1,1]$, which defines contours 
$\Sigma_j$, $j=1, \ldots, 4$
and domains $\Omega_j$, $j=1, \ldots, 4$  as indicated in Figure \ref{fig_lens2}. 

In the fourth transformation we define the matrix valued function $S(z)$:
\begin{align} \label{S}
S(z) = \begin{cases} 
     T(z)\begin{pmatrix} 1 & 0\\ -\frac{e^{\nu\pi i/2}}{W_n(z)} e^{-2n\varphi(z)}& 1\end{pmatrix}, & \text{for } z \in \Omega_1,\\
    T(z) \begin{pmatrix} 1 & 0\\ -\frac{e^{-\nu\pi i/ 2}}{W_n(z)}e^{-2n\varphi(z)} & 1\end{pmatrix}, & \text{for } z \in \Omega_2,\\
     T(z) \begin{pmatrix} 1 & 0\\ \frac{e^{-\nu\pi i/ 2}}{W_n(z)}e^{-2n\varphi(z)} & 1\end{pmatrix}, & \text{for } z \in \Omega_3,\\
     T(z) \begin{pmatrix} 1 & 0\\ \frac{e^{\nu\pi i/2}}{W_n(z)} e^{-2n\varphi(z)}& 1\end{pmatrix}, & \text{for } z \in \Omega_4,\\
     T(z), &  \textrm{elsewhere},
     \end{cases}
\end{align}
using the analytic extension \eqref{analyticW} for the function $W_n(z)$ in each region, 
and $\varphi(z)$ defined in \eqref{phifunction}. 

\begin{remark}
 In order to divide by $W_n(z)$ we need to be careful with possible zeros of this function in the complex plane. 
Following the general theory in \cite[\S 15.7]{Watson}, the Bessel function $K_{\nu}(n\pi z)$ is free from
zeros in the half--plane $|\arg z|\leq \tfrac{\pi}{2}$. Using \eqref{analyticW}, 
we can conclude that $W_n(z)\neq 0$. 
\end{remark}

From the RH problem \ref{RHforT} and \eqref{S} we find that that $S(z)$ 
is the solution of the following RH problem:
\begin{rhp}\label{RHforS}
\begin{itemize}
\item[1)] $S$ is analytic in $\mathbb{C}\setminus \Sigma_S$, where $\Sigma_S$ is depicted in Figure \ref{fig_lens2}.
\item[2)] $S$ satisfies the jump conditions $S_+ = S_- J_S$ where
\begin{align} \label{jumps:Sgeneral}
J_S(z) = 
\begin{cases}
\begin{pmatrix} 1 & 0\\ \frac{e^{\nu\pi i/2}}{W_n(z)} e^{-2n\varphi(z)}& 1\end{pmatrix},
	& \quad z\in\Sigma_1\cup \Sigma_4,\\
\begin{pmatrix} 1 & 0\\ \frac{e^{-\nu\pi i/ 2}}{W_n(z)}e^{-2n\varphi(z)} & 1\end{pmatrix},
	& \quad z\in\Sigma_2\cup\Sigma_3,\\
\begin{pmatrix} 0& e^{\nu\pi i/2}W_n(x)\\ -\frac{e^{-\nu\pi i/2}}{W_n(x)} & 0\end{pmatrix},
	& \quad z \in (-1,0),\\
\begin{pmatrix} 0& e^{-\nu\pi i/2}W_n(x)\\ -\frac{e^{\nu\pi i/2}}{W_n(x)} & 0\end{pmatrix},
  & \quad z \in (0,1),\\
\begin{pmatrix} 1 & e^{\nu\pi i/2}e^{2n \varphi(z)} W_n(z) \\ 0 & 1\end{pmatrix},
  & \quad z\in(-\infty,-1),\\
\begin{pmatrix} 1 & e^{-\nu\pi i/2}e^{2n \varphi(z)} W_n(z) \\ 0 & 1\end{pmatrix},
& \quad z\in(1,\infty),\\
\begin{pmatrix}
             1 & 0\\
	     j_1(z) & 1
             \end{pmatrix},  & \quad z\in(0, i \rho),\\
\begin{pmatrix}
             1 & 0\\
	     j_2(z) & 1
             \end{pmatrix},   & \quad z \in (-i \rho, 0). 
\end{cases}
\end{align}

Here 
\begin{equation}
\label{eq:definition-j1}
j_1(z)=\frac{e^{\nu\pi i/2} e^{-2n\varphi_-(z)}}{W_{n,-}(z)}-\frac{e^{-\nu\pi i/2}e^{-2n\varphi_+(z)}}{W_{n,+}(z)},
	\qquad z \in (0, i \rho),
\end{equation}
and
\begin{equation}
\label{eq:definition-j2}
j_2(z)=-\frac{e^{\nu\pi i/2} e^{-2n\varphi_-(z)}}{W_{n,-}(z)}+\frac{e^{-\nu\pi i/2}e^{-2n\varphi_+(z)}}{W_{n,+}(z)},
	\qquad z \in (-i \rho, 0),
\end{equation}
using the appropriate values of $\varphi_{\pm}(z)$ and $W_{n,\pm}(z)$ in each case. The imaginary axis is
oriented upwards, and so for $z \in i\mathbb R$, we have that $\varphi_+(z)$ and $W_{n,+}(z)$ 
($\varphi_-(z)$ and $W_{n,-}(z)$) denote the limiting value from  the left (right) half-plane.

\item[3)] As $z\rightarrow\infty$,
\begin{equation*}
S(z)=I+\mathcal{O}(1/z).
\end{equation*}
\item[4)] $S(z)$ remains bounded as $z \to 0$ with $\Im z > 0$, and 
\begin{equation} \label{at0:Sgeneral}
S(z)=\begin{pmatrix}
\mathcal{O}(z^{\nu}) & \mathcal{O}(z^{-\nu}) \\
\mathcal{O}(z^{\nu}) & \mathcal{O}(z^{-\nu}) \end{pmatrix}, \quad \text{ as } z \to 0 \text{ with } \Im z < 0.
\end{equation}
\end{itemize}
\end{rhp}

Note that as a consequence of the definition of $\varphi(z)$ in
\eqref{phifunction} and formula \eqref{phig}, $\Im \varphi(x)$ is decreasing on $[-1,1]$. Because 
of the Cauchy--Riemann equations, $\Re \varphi(z)>0$ as we move away from the interval.

We may and do assume that the lens is small enough such that $\Re \varphi(z) > 0$ on the lips
of the lens. Then it follows from \eqref{Wnestimate} and \eqref{jumps:Sgeneral}
that the jump matrix $J_S$ on the lips of the lens tends to $I$ at an exponential rate as $n \to \infty$,
if we stay away from the endpoints $\pm 1$. Also the jump matrix on $(-\infty,-1)$
and $(1, \infty)$ tends to the identity matrix.
Thus for any $\delta > 0$, there is a constant $c > 0$ such that
\begin{equation} \label{JSasymp}
	J_S(z) = I + \mathcal{O}(e^{-cn}), \qquad z \in \Sigma_S \setminus ([-1,1] \cup [-i \rho, i\rho] \cup D(\pm 1, \delta)).
	\end{equation}

The condition \eqref{at0:Sgeneral} needs some explanation, since \eqref{at0:Tgeneral} and \eqref{S} 
at first sight lead to the behavior
$S(z)=\begin{pmatrix}
\mathcal{O}(1) & \mathcal{O}(z^{-\nu}) \\
\mathcal{O}(1) & \mathcal{O}(z^{-\nu}) \end{pmatrix}$ as $z \to 0$  with $\Im z < 0$.
However, a cancellation takes place for the entries in the first column,
as can be checked from the jump conditions  for $S$, see \eqref{jumps:Sgeneral} 
on the intervals $(-1,0)$ and $(0,1)$. Since $S$ remains bounded as $z \to 0$ with $\Im z > 0$,
and 
\[ S_-(z) = S_+(z) \begin{pmatrix} 0 & \mathcal{O}(z^{-\nu}) \\ \mathcal{O}(z^{\nu}) & 0 \end{pmatrix},
		\quad \text{ as } z \to 0, \]
		one finds \eqref{at0:Sgeneral}.

\subsection{Global parametrix}\label{Sec_global}

If we ignore the jump matrices in the RH problem for $S$ except for the one on the interval $[-1,1]$, we arrive at
the following RH problem for a $2 \times 2$ matrix valued function $N$:
\begin{rhp}\label{RHforNgeneral}
\begin{itemize}
\item[1)] $N$ is analytic in $\mathbb{C}\setminus [-1,1]$.
\item[2)] $N$ satisfies the jump conditions
\begin{align*}
N_+(x)&=N_-(x)
\begin{cases}
\begin{pmatrix} 0 & e^{\nu\pi i/2}W_n(x) \\ -\frac{e^{-\nu\pi i/2}}{W_n(x)} & 0 \end{pmatrix},
\quad x\in(-1,0), \\
\begin{pmatrix} 0 & e^{-\nu\pi i/2}W_n(x) \\ -\frac{e^{\nu\pi i/2}}{W_n(x)} & 0 \end{pmatrix},
\quad x\in(0,1).
\end{cases}
\end{align*}
\item[3)] As $z\rightarrow\infty$,
\begin{equation*}
N(z)=I+\mathcal{O}(1/z).
\end{equation*}
\end{itemize}
\end{rhp}

We solve the RH problem for $N$ by means of two Szeg\H{o} functions $D_{1,n}$ and $D_2$, see also
\cite{KMcVV}, that are associated with $W_n$ and $e^{- \sgn(x) \nu \pi i/2}$, respectively.

The first Szeg\H{o} function $D_1 = D_{1,n}$ is defined by 
\begin{equation}
\label{eq:D1(z)}
D_{1,n}(z)=\exp \left(\frac{(z^2-1)^{1/2}}{2\pi}\int_{-1}^{1}\frac{\log W_n(x)}{\sqrt{1-x^2}}\frac{dx}{z-x}\right),
\end{equation}
which is defined and analytic for $z \in \mathbb C \setminus [-1,1]$. It satisfies 
\begin{equation}\label{D1plusminus}
D_{1,n+}(x)D_{1,n-}(x)=W_n(x), \qquad x \in (-1,1). 
\end{equation}
It follows from \eqref{eq:D1(z)} that $D_{1,n}$ has no zeros in $\mathbb C \setminus [-1,1]$ and  
\begin{equation} \label{D1infinity}
   D_{\infty,n} := \lim_{z\to\infty} D_{1,n}(z) =
	\exp \left(\frac{1}{2\pi}\int_{-1}^1 \frac{\log W_n(x)}{\sqrt{1-x^2}} dx\right) \in (0,\infty).
\end{equation} 
In what follows we are not going
to indicate the $n$-dependence in the notation for $D_{1,n}$ and $D_{\infty,n}$, since 
the dependence on $n$ is only mildly. Indeed, because of \eqref{Wnestimate}
we have that $D_{1,n}$ tends to the Szeg\H{o} function for the weight $|x|^{-1/2}$ with
a rate as given in the following lemma.

\begin{lemma} \label{lem:D1nlimit}	We have
\begin{align} \label{D1nlimit} 
	D_{1,n}(z) & = \left( \frac{z + (z^2-1)^{1/2}}{z} \right)^{1/4} \left( 1 + \mathcal{O}\left(\frac{\log n}{n}\right) \right), \\
	  \label{Dinftylimit}
	D_{\infty,n} & = 2^{1/4} + \mathcal{O}\left(\frac{\log n}{n}\right), 
	\end{align}
	as $n \to \infty$, with $\mathcal{O}$-term that is uniform for 
	$z \in \mathbb C \setminus ([-1,1] \cup D(0, \delta)\cup D(\pm 1,\delta))$
	for any $\delta > 0$.
	\end{lemma}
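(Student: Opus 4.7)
The plan is to separate $D_{1,n}$ into a closed-form ``limit'' Szeg\H{o} function plus a small correction coming from the fact that $W_n(x)$ converges to $|x|^{-1/2}$ as $n\to\infty$. Using the large-argument asymptotic \eqref{asympKv} one has $W_n(x)\to|x|^{-1/2}$ pointwise on $(-1,1)\setminus\{0\}$, so the candidate leading Szeg\H{o} function is that of the weight $|x|^{-1/2}$ on $[-1,1]$. I claim it is
\[ D^{(0)}(z)=\left(\frac{z+(z^2-1)^{1/2}}{z}\right)^{1/4}, \]
with the $1/4$-root fixed by $D^{(0)}(\infty)=2^{1/4}$. Indeed, with $\phi(z)=z+(z^2-1)^{1/2}$ one checks $\phi_+(x)\phi_-(x)=1$ and $|\phi_\pm(x)/x|=1/|x|$ on $(-1,0)\cup(0,1)$, giving $D^{(0)}_+(x)D^{(0)}_-(x)=|x|^{-1/2}$; and applying the integral formula \eqref{eq:D1(z)} to $|x|^{-1/2}$ yields the same value $2^{1/4}$ at infinity via the classical identity $\int_0^\pi\log|\cos\theta|\,d\theta=-\pi\log 2$.

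Writing $D_{1,n}(z)=D^{(0)}(z)\exp(F_n(z))$ with
\[ F_n(z)=\frac{(z^2-1)^{1/2}}{2\pi}\int_{-1}^{1}\frac{h_n(x)}{\sqrt{1-x^2}\,(z-x)}\,dx,\qquad h_n(x):=\log\bigl(W_n(x)|x|^{1/2}\bigr), \]
the lemma reduces to showing $F_n(z)=\mathcal{O}(\log n/n)$ uniformly on the stated set. I split at $|x|=1/n$: for $|x|\geq 1/n$, \eqref{asympKv} yields $W_n(x)=|x|^{-1/2}(1+\mathcal{O}(1/(n|x|)))$, hence pointwise $h_n(x)=\mathcal{O}(1/(n|x|))$; for $|x|<1/n$, \eqref{near0Kv} yields $W_n(x)=\mathcal{O}(n^{1/2-\nu}|x|^{-\nu})$, hence $h_n(x)=(\tfrac{1}{2}-\nu)\log(n|x|)+\mathcal{O}(1)$. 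Integrating against $1/\sqrt{1-x^2}$, the outer piece contributes $\int_{1/n}^{1}dx/(nx)=\mathcal{O}(\log n/n)$ and the inner piece $\mathcal{O}(1/n)$ after $y=nx$, giving the $L^1$-bound $\int_{-1}^{1}|h_n(x)|/\sqrt{1-x^2}\,dx=\mathcal{O}(\log n/n)$.

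To turn this into a uniform bound on $F_n$ I split the Cauchy integral at $|x|=\delta/2$. On $|x|<\delta/2$ the hypothesis $z\notin D(0,\delta)$ forces $|z-x|\geq\delta/2$, so the $L^1$-bound above supplies the desired $\mathcal{O}(\log n/n)$. On $|x|>\delta/2$ the first sub-case provides the stronger pointwise bound $h_n(x)=\mathcal{O}(1/n)$, while the remaining factor $\int \frac{dx}{\sqrt{1-x^2}(z-x)}$ is the Cauchy transform of a smooth density on a set bounded away from $\pm 1$, hence uniformly bounded for $z\notin D(\pm 1,\delta)$ by standard Plemelj--Privalov estimates. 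Since $(z^2-1)^{1/2}$ is itself bounded off $D(\pm 1,\delta)$, this yields $F_n(z)=\mathcal{O}(\log n/n)$ uniformly, proving \eqref{D1nlimit}; taking $z\to\infty$ and using $D^{(0)}(\infty)=2^{1/4}$ then gives \eqref{Dinftylimit}.

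The main obstacle is the uniform control of the Cauchy integral as $z$ approaches $[-1,1]$ outside the three exclusion disks: a bare $L^1$-estimate on $h_n$ alone is insufficient there. The key observation that rescues the argument is that the bad region $|x|<1/n$, where $h_n$ is of size $\log n$, is separated from every admissible $z$ by distance at least $\delta/2$, so the boundary behaviour of $F_n$ is driven only by the complementary region, where $h_n$ is pointwise $\mathcal{O}(1/n)$.
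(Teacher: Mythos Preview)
Your approach is essentially the paper's: identify the limiting Szeg\H{o} function for $|x|^{-1/2}$, write $D_{1,n}/D^{(0)}=\exp F_n$ with $F_n$ a Cauchy-type integral of $h_n=\log(|x|^{1/2}W_n)$, split at $|x|\sim 1/n$, and bound $h_n$ in each regime via the $K_\nu$ asymptotics. The only methodological difference is in how uniformity near $[-1,1]$ is obtained: the paper deforms the integration path into the complex plane so as to keep it at a fixed distance from $z$, whereas you split the $x$-integral at $|x|=\delta/2$ and appeal to Plemelj-type bounds on the outer piece. Both ideas work.

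There is, however, a slip. You write that ``$(z^2-1)^{1/2}$ is itself bounded off $D(\pm1,\delta)$'', but $(z^2-1)^{1/2}\sim z$ as $z\to\infty$, and the lemma demands uniformity on an unbounded set. As written, your estimate on $F_n$ only covers bounded $z$. The fix is immediate: for $|z|\ge 2$, say, one has $|z-x|\ge|z|/2$ for all $x\in[-1,1]$, so the entire Cauchy integral is $\mathcal{O}\bigl(\|h_n/\sqrt{1-x^2}\|_{L^1}/|z|\bigr)=\mathcal{O}(\log n/(n|z|))$, and the growth of $(z^2-1)^{1/2}$ is cancelled. A smaller point: the phrase ``smooth density on a set bounded away from $\pm1$'' is not accurate, since the integration set $\{\delta/2<|x|<1\}$ does reach $\pm1$, where $1/\sqrt{1-x^2}$ blows up. What actually makes that Cauchy integral bounded is that $z$ stays at distance $\ge\delta$ from $\pm1$, so the non-smooth part of the density lies uniformly far from $z$; on the remaining interval the density is smooth and Plemelj applies.
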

\begin{proof}
The Szeg\H{o} function for $|x|^{-1/2}$ is
\[ D(z; |x|^{-1/2}) = \exp \left(\frac{(z^2-1)^{1/2}}{2\pi} \int_{-1}^1 \frac{ \log |x|^{-1/2}}{\sqrt{1-x^2}} \frac{dx}{z-x} \right)
	= \left( \frac{z + (z^2-1)^{1/2}}{z} \right)^{1/4}. \]
and so
\begin{equation} \label{D1nformula} \left( \frac{z + (z^2-1)^{1/2}}{z} \right)^{-1/4} D_{1,n}(z) = 
	\exp  \left(\frac{(z^2-1)^{1/2}}{2\pi} 
	\int_{-1}^1 \frac{ \log (|x|^{1/2} W_n(x))}{\sqrt{1-x^2}} \frac{dx}{z-x} \right).
\end{equation}
Because of \eqref{Wnestimate} there exist $c_0, c_1 > 0$ 
\[ \left| |x|^{1/2} W_n(x) - 1 \right| \leq \frac{c_1}{n|x|} < \frac{1}{2}, \qquad |x| \geq \frac{c_0}{n}.  \]
Then also for some $c_2 > 0$,
\[ \left| \log(|x|^{1/2} W_n(x))\right| \leq \frac{c_2}{n|x|}, \qquad |x| \geq \frac{c_0}{n}.  \]
It follows that
\begin{align*} 
	\left| \int_{c_0/n}^1 \frac{ \log(|x|^{1/2} W_n(x))}{\sqrt{1-x^2}} \frac{dx}{z-x}  \right|
	& \leq \frac{c_2}{\dist(z, [-1,1]) n} \int_{c_0/n}^1 \frac{1}{x\sqrt{1-x^2}} dx  \\
	& 	\leq \frac{c_3}{\dist(z, [-1,1])} \frac{\log n}{n} \end{align*}
	with a constant $c_3$ that is independent of $n$ and $z$. 
	By deforming the integration path into the complex plane in such a way that it stays at a
	certain distance from $z$, and applying similar estimates we find
\begin{equation} \label{D1nestimate1} 
	\left| \int_{c_0/n}^1 \frac{ \log(|x|^{1/2} W_n(x))}{\sqrt{1-x^2}} \frac{dx}{z-x}  \right|
		\leq \frac{c_4}{|z|} \frac{\log n}{n} \end{equation}	
	with a constant that is independent of 
	$z \in \mathbb C \setminus ([-1,1] \cup D(0, \delta)\cup D(\pm 1,\delta))$.
Similarly
\begin{equation} \label{D1nestimate2} 
	\left| \int_{-1}^{-c_0/n} \frac{ \log (|x|^{1/2} W_n(x))}{\sqrt{1-x^2}} \frac{dx}{z-x}  \right|
		\leq \frac{c_5}{|z|} \frac{\log n}{n}. \end{equation}
		
Near $x=0$ we use \eqref{near0Kv} and \eqref{eq:definition-W} to find a $c_6 > 0$ such
that
\[  c_6 |nx|^{1/2 - \nu}\leq |x|^{1/2} W_n(x) \leq 1, \qquad |x| \leq \frac{c_0}{n}. \]
The upper bound follows from the fact that $0 < K_{\nu}(s) \leq K_{1/2}(s)$ if $0\leq \nu<1/2$ and
$s > 0$ and the explicit formula for $K_{1/2}(s)$ see \cite[10.37.1,10.39.2]{DLMF}.
Then
\[ \left| \log(|x|^{1/2} W_n(x)) \right| \leq 
	\left|\log c_6 + \left(\tfrac{1}{2} - \nu\right) \log |nx|\right|, 
	\qquad |x| \leq \frac{c_0}{n} \]
and
\begin{equation} \label{D1nestimate3} 
	\left| \int_{-c_0/n}^{c_0/n} \frac{ \log |x|^{1/2} W_n(x)}{\sqrt{1-x^2}} \frac{dx}{z-x}  \right|
		 \leq \frac{2}{|z|} 		
		 \int_{-c_0/n}^{c_0/n}  \left| \log c_6 +\left(\tfrac{1}{2} - \nu\right) \log |nx| \right|  dx
		\leq \frac{c_7}{|z|} \frac{1}{n} \end{equation}
for some new constant $c_7 > 0$.

Combining the estimates \eqref{D1nestimate1}, \eqref{D1nestimate2}, and \eqref{D1nestimate3}, 
we get
\[ 	 \left| \frac{(z^2-1)^{1/2}}{2\pi} \int_{-1}^1 \frac{ \log(|x|^{1/2} W_n(x))}{\sqrt{1-x^2}} \frac{dx}{z-x} \right|
	 = \mathcal{O}\left(\frac{ \log n}{n}\right) \]
with a $\mathcal{O}$ term that is uniform for $|z| > \delta$ $|z\pm 1| > \delta$, and so by \eqref{D1nformula}
\[ \left( \frac{z + (z^2-1)^{1/2}}{z} \right)^{-1/4} D_{1,n}(z) = 
   \exp\left( \mathcal{O}\left(\frac{ \log n}{n}\right)\right) = 1 + \mathcal{O}\left(\frac{ \log n}{n}\right) \]
	as claimed in \eqref{D1nlimit}.
	
Since \eqref{D1nlimit} is uniform for $|z| > \delta$, $|z\pm 1| > \delta$, we can let $z \to \infty$, and
obtain \eqref{Dinftylimit}.
\end{proof}

The second Szeg\H{o} function $D_2$ corresponds to the weight $e^{\pm\nu\pi i/2}$, and is defined as 
\begin{equation} \label{eq:D2(z)}
D_2(z)=\left(\frac{\sqrt{z^2-1}-i}{\sqrt{z^2-1}+i}\right)^{\nu/4}, \qquad z \in \mathbb C \setminus [-1,1],
\end{equation}
with the branch of the square root that is positive for real $z > 1$. 
It is not difficult to check that $z \mapsto w = D_2(z)$ is the conformal mapping from $\mathbb C \setminus [-1,1]$ onto
the sector $- \frac{\nu \pi}{4} < \arg w < \frac{\nu\pi}{4}$ that maps $z= 0+$ to $w=0$, $z=0-$ to $w=\infty$,
$z = \pm 1$ to $e^{\mp \frac{\nu \pi}{4}}$ and $z=\infty$ to $w=1$. 

The Szeg\H{o} function $D_2$ is related to the function $\psi$ from \eqref{complexpsi}.
\begin{lemma}
We have
\begin{equation} \label{D2andpsi}
	\log D_2(z) = \begin{cases} - \frac{\nu \pi}{2} \psi(z) - \frac{\nu \pi i}{4}, & \Re z > 0, \, \Im z > 0, \\
		\frac{\nu \pi}{2} \psi(z) - \frac{\nu \pi i}{4}, & \Re z > 0, \, \Im z <0, \\
		- \frac{\nu \pi}{2} \psi(z) +  \frac{\nu \pi i}{4}, & \Re z < 0, \, \Im z > 0, \\
		\frac{\nu \pi}{2} \psi(z) + \frac{\nu \pi i}{4}, & \Re z < 0, \, \Im z < 0.
	\end{cases} \end{equation}
\end{lemma}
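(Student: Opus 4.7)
The plan is to reduce $\log D_2(z)$ to $\pm\frac{\nu\pi}{2}\psi(z)$ plus an imaginary constant through direct algebraic manipulation, working quadrant by quadrant. First I would observe that in each open quadrant, the branch of $(z^2-1)^{1/2}$ used in \eqref{eq:D2(z)} and the branch of $(1-z^2)^{1/2}$ used in \eqref{complexpsi} are related by
\[
(z^2-1)^{1/2} = \epsilon\, i\, (1-z^2)^{1/2}, \qquad \epsilon = \sgn(\Im z).
\]
Neither branch has a cut through the open quadrant, so this identity is verified once at any convenient test point (for instance $z = 2i$ gives $(z^2-1)^{1/2}=i\sqrt 5$ and $(1-z^2)^{1/2}=\sqrt 5$), and then extends by analyticity.

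Next I would use the identities
\[
1 + (1-z^2)^{1/2} = z\,e^{\pi\psi(z)}, \qquad 1 - (1-z^2)^{1/2} = z\,e^{-\pi\psi(z)},
\]
which follow directly from \eqref{complexpsi} (the second from the first via $(1+a)(1-a)=z^2$ with $a=(1-z^2)^{1/2}$). Substituting both, together with the relation above, into \eqref{eq:D2(z)} gives
\[
\frac{(z^2-1)^{1/2}-i}{(z^2-1)^{1/2}+i}
= \frac{\epsilon(1-z^2)^{1/2}-1}{\epsilon(1-z^2)^{1/2}+1}
= -e^{-2\epsilon\pi\psi(z)},
\]
so that
\[
\log D_2(z) = -\frac{\epsilon\nu\pi}{2}\,\psi(z) + \frac{\nu}{4}\log(-1),
\]
with $\log(-1)\in\{i\pi,-i\pi\}$ still to be chosen. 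For $\Re z < 0$, this requires interpreting $\psi$ as the analytic continuation from $\{\Re z>0\}$ across the imaginary axis in the appropriate half-plane, and the two possible continuations (across the upper and lower axis) are precisely what generates the quadrant-dependence of the constant in \eqref{D2andpsi}.

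To pin down the constant in each region I would evaluate at a convenient test point. For instance $z=i$ gives $D_2(i) = (\sqrt 2-1)^{\nu/2}>0$, while $-\frac{\nu\pi}{2}\psi(i) = -\frac{\nu}{2}\log(1+\sqrt 2) + \frac{\nu\pi i}{4}$, which forces $\frac{\nu}{4}\log(-1) = -\frac{\nu\pi i}{4}$ in the first quadrant. The other three quadrants follow either from analogous one-point evaluations or from the symmetries $D_2(\bar z) = \overline{D_2(z)}$ and $D_2(-z) = 1/D_2(z)$ built into the definition \eqref{eq:D2(z)}. The only real obstacle is the consistent branch tracking between $(z^2-1)^{1/2}$, $(1-z^2)^{1/2}$, and $\log z$; once the correspondence $\epsilon = \sgn(\Im z)$ and the quadrant-wise analytic continuation of $\psi$ are fixed, the proof reduces to a single-point check in each of the four regions.
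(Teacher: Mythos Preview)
Your approach is correct and is precisely the kind of direct computation the paper has in mind; the paper's own proof is the single line ``This follows from \eqref{complexpsi} and \eqref{eq:D2(z)} by straightforward calculation,'' and you have carried out that calculation in full.

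One small slip: the symmetry $D_2(\bar z)=\overline{D_2(z)}$ is not correct. For real $z>1$ the quantity $\frac{(z^2-1)^{1/2}-i}{(z^2-1)^{1/2}+i}$ has modulus $1$, so $D_2$ maps $(1,\infty)$ to an arc of the unit circle rather than to the real axis; the Schwarz reflection therefore gives $D_2(\bar z)=1/\overline{D_2(z)}$, not $\overline{D_2(z)}$. (Your other symmetry $D_2(-z)=1/D_2(z)$ is correct, since the branch satisfies $((-z)^2-1)^{1/2}=-(z^2-1)^{1/2}$.) This does not affect the validity of your argument, because you explicitly offer the alternative of a one-point evaluation in each quadrant, and that route goes through without difficulty.
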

\begin{proof}
This follows from \eqref{complexpsi} and \eqref{eq:D2(z)} by straightforward calculation.
\end{proof}

It follows from \eqref{D2andpsi} that $D_2$ satisfies
\begin{equation} \label{D2jump}
 D_{2+}(x) D_{2-}(x) = \begin{cases} e^{\nu\pi i/2}, & \quad x \in (-1,0), \\
	e^{-\nu\pi i/2}, & \quad x \in (0,1), \end{cases} 
	\end{equation}
and, since $\psi(z) \sim \frac{1}{\pi} \log (1/z)$ as $z \to 0$,
\begin{equation} \label{D2at0}
D_{2}(z) = \begin{cases} \mathcal{O}(z^{\nu/2}) & \text{ as } z \to 0 \text{ with } \Im z > 0, \\
	\mathcal{O}(z^{-\nu/2}) & \text{ as } z \to 0 \text{ with } \Im z < 0.
	\end{cases}
\end{equation}

Having $D_1$ and $D_2$ we seek $N$ in the form
\begin{equation} \label{solutionNgeneral}
N(z) = D_{\infty}^{\sigma_3} N_0(z) \left(D_1(z) D_2(z) \right)^{-\sigma_3}.
\end{equation}
Then  $N$ satisfies the RH problem \ref{RHforNgeneral} if and only if $N_0$
satisfies the following standard RH problem:
\begin{rhp}\label{RHforN0general}
\begin{itemize}
\item[1)] $N_0$ is analytic in $\mathbb{C}\setminus [-1,1]$.
\item[2)] $N_0$ satisfies the jump conditions
\[ N_{0+}(x)= N_{0-}(x)\begin{pmatrix} 0 & 1\\ -1 & 0 \end{pmatrix},
\qquad x\in(-1,1).
\]
\item[3)] $N_0(z) = I + \mathcal{O}(1/z)$ as $z\rightarrow\infty$.
\end{itemize}
\end{rhp}

The RH problem for $N_0$ has the explicit solution (see for instance \cite[Section 7.3]{Deift}): 
\begin{equation} \label{eq:N00}
N_0(z)=\begin{pmatrix}
\frac{\beta(z)+\beta(z)^{-1}}{2} & \frac{\beta(z)-\beta(z)^{-1}}{2i}\\
-\frac{\beta(z)-\beta(z)^{-1}}{2i} & \frac{\beta(z)+\beta(z)^{-1}}{2}
\end{pmatrix}, \quad \text{ with }
\beta(z)=\left(\frac{z-1}{z+1}\right)^{1/4}, 
\end{equation}
for $z \in \mathbb C \setminus [-1,1]$,
and we take the branch of the fourth root that is analytic in $\mathbb C \setminus [-1,1]$
and that is real and positive for $z > 1$. 
Note that we can also write
\begin{equation} \label{eq:N0}
N_0(z)= \frac{1}{\sqrt{2}(z^2-1)^{1/4}} \begin{pmatrix} f(z)^{1/2} & i f(z)^{-1/2} \\ -i f(z)^{-1/2} & f(z)^{1/2} \end{pmatrix}
\end{equation}
where 
\begin{equation}\label{fz}
f(z) = z + (z^2-1)^{1/2} 
\end{equation}
is the conformal map from $\mathbb C \setminus [-1,1]$ to the exterior of the unit disk.

\subsection{Fifth transformation}

Around the endpoints $z=\pm 1$ we build Airy parametrices $P_{\Ai}$ in the usual way.
We take $\delta > 0$ sufficiently small, and $P_{\Ai}$ is defined and analytic in $D(\pm 1, \delta) \setminus \Sigma_S$
such that it has the same jumps as $S$ on $\Sigma_S \cap D(\pm 1, \delta)$, 
and such that 
\begin{equation} \label{matching}
	P_{\Ai}(z) = N(z) (1 + \mathcal{O}(n^{-1})),  \quad \text{uniformly for } |z \pm 1| = \delta,
	\end{equation}
	as $n \to \infty$. We refer 
the reader for instance to the monograph by Deift \cite[\S 7.6]{Deift} for details.

In the fifth transformation we put
\begin{equation} \label{Q}
	Q = \begin{cases} SN^{-1}, & \text{ outside the disks $D(\pm 1, \delta)$,} \\
	S P_{\Ai}^{-1}, & \text{ inside the disks.}
	\end{cases} \end{equation}
Then $Q$ is defined and analytic outside of a contour consisting of $\Sigma_S$ and
two circles around $\pm 1$. The construction of the Airy parametrix is such that
it has the same jump as $S$ inside the circles. As a result $Q$ is analytic inside
the two disks. Also  $S$ and $N$ have the same jump on $(-1,1)$ and it follows that
$Q$ is analytic across $(-1,1)$. Therefore $Q$ is analytic in $\mathbb C \setminus \Sigma_Q$ where 
$\Sigma_Q$ consists of two circles around $\pm 1$, the parts of $(-\infty, -1)$, $\Sigma_j$, 
$j=1,\ldots, 4$ and $(1, \infty)$ outside of these circles, 
and the segment $(-i \rho, i \rho)$ on the imaginary axis. See Figure \ref{figQ}.

\begin{figure}
\centerline{\includegraphics{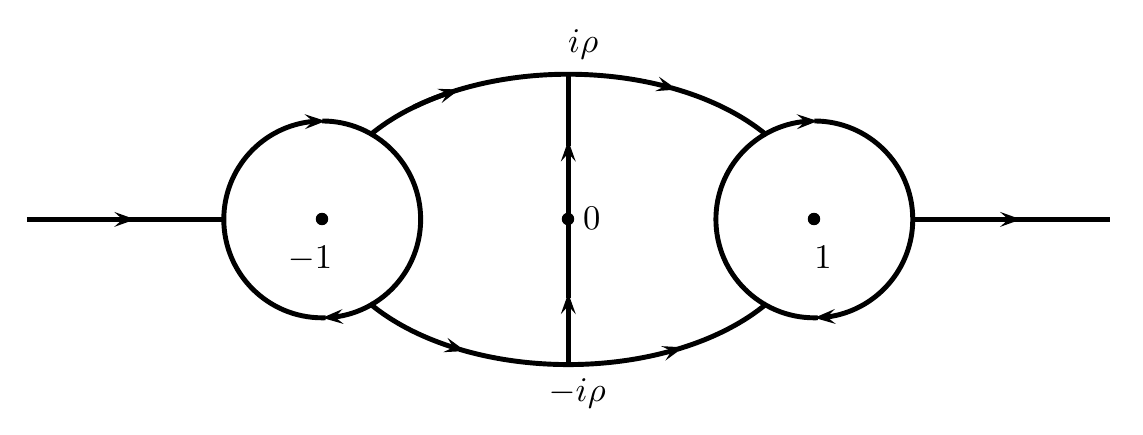}}
\caption{Contour $\Sigma_Q$}
\label{figQ}
\end{figure}

From the RH problem \ref{RHforS} for $S$ and \eqref{Q} it then follows that
$Q$ solves the following RH problem.

\begin{rhp}\label{RHforQ}
\begin{itemize}
\item[1)] $Q : \mathbb C \setminus \Sigma_Q \to \mathbb C^{2 \times 2}$ is analytic.
\item[2)] $Q$ satisfies the jump condition
$Q_+ = Q_- J_Q$ on $\Sigma_Q$ where
\begin{align*}
J_Q(z) = 
\begin{cases}
N(z) P_{\Ai}^{-1}(z), & \text{ for $z$ on the circles}, \\
N(z) \begin{pmatrix} 1 & 0\\ j_1(z) & 1 \end{pmatrix} N^{-1}(z) & \text{ for } z \in (0, i \rho),\\
N(z) \begin{pmatrix} 1 & 0\\ j_2(z) & 1 \end{pmatrix} N^{-1}(z) & \text{ for } z \in (-i \rho, 0), \\
N(z) J_S(z) N(z)^{-1}, & \text{ elsewhere on $\Sigma_Q$.}
\end{cases}
\end{align*}
Here $j_1$ and $j_2$ are given by \eqref{eq:definition-j1} and \eqref{eq:definition-j2}.
\item[3)] As $z\rightarrow\infty$,
\begin{equation*}
Q(z)=I+\mathcal{O}(1/z).
\end{equation*}
\item[4)] $Q(z) = \mathcal{O}(1)$ as $z \to 0$. 
\end{itemize}
\end{rhp}
In the behavior around $0$ there is no longer a distinction between the upper
and lower half planes, and $Q$ remains bounded in all directions.

We note that
\begin{equation} \label{JQasymp1}
	J_Q(z) = I + \mathcal{O}(n^{-1}), \qquad \text{ for $z$ on the circles} 
\end{equation}
because of the matching property \eqref{matching}. We also note that
\begin{equation} \label{JQasymp2}
	J_Q(z) = I + \mathcal{O}(e^{-cn}), \qquad \text{ on } \Sigma_Q \setminus (\partial D(\pm 1, \delta) \cup  [-i \rho, i\rho] )
	\end{equation}
because of \eqref{JSasymp}, \eqref{solutionNgeneral}, and Lemma \ref{lem:D1nlimit}.

The jump matrix $J_Q$ on the imaginary axis can be rewritten as (we use \eqref{solutionNgeneral}):
\begin{align} \label{jumpQ1} 
	J_Q(z) = D_{\infty}^{\sigma_3}
	N_0(z) \begin{pmatrix} 1 & 0 \\ j_{1,2}(z) (D_1(z) D_2(z))^2 & 1 \end{pmatrix} 
		N_0^{-1}(z) D_{\infty}^{-\sigma_3}, \qquad z \in (-i \rho, i \rho),
		\end{align}
		with $j_1$ on $(0, i\rho)$, and $j_2$ on $(-i \rho,0)$.

The entry $j_{1,2}(z) (D_1(z) D_2(z))^2$ in \eqref{jumpQ1} depends on $n$,
and tends to $0$ as $n \to\infty$ for every $z \in (-i \rho, 0) \cup (0, i \rho)$,
but not in a uniform way. Hence, further analysis is needed in the next section. 
A similar situation is studied in \cite[Section 5]{BB}, where the jump 
on the imaginary axis has the same structure and approaches the identity matrix
at a rate $1/\log(n)$ as $n\to \infty$. In that case no local parametrix near the origin is needed.

\subsection{Local parametrix near $z=0$}
The construction of a local parametrix in a neighborhood of the origin 
follows the idea exposed in \cite{KMcL}. We take $\varepsilon > 0$ with 
\[ \varepsilon < \min \left( \tfrac{1}{2 e}, \tfrac{\rho}{3} \right) \]
and we build a local parametrix $P$ defined in a neighborhood $|z| < 3\varepsilon$ of $0$.
We use a cut-off function $\chi(z)$ on $i \mathbb R$ such that
\begin{enumerate}
\item[(a)] $\chi : i \mathbb R \to \mathbb R$ is a $C^{\infty}$ function,
\item[(b)] $0 \leq \chi(z) \leq 1$ for all $z \in i \mathbb R$,
\item[(c)] $\chi(z) \equiv 1$ for $z \in (-i \varepsilon, i \varepsilon)$,
\item[(d)] $\chi(z) \equiv 0$ for $z \in \left(-i \infty, -2i\varepsilon\right) \cup \left(2i\varepsilon, i \infty\right)$.
\end{enumerate} 
Then we modify $J_Q$ by multiplying the off-diagonal entry in the middle factor of \eqref{jumpQ1} by $\chi(z)$,
and in addition we use this as a jump matrix in the full imaginary axis. Thus
\begin{equation} \label{jump:P} 
	J_{P}(z) = D_{\infty}^{\sigma_3} 
	N_0(z) \begin{pmatrix} 1 & 0 \\ j_{1,2}(z) (D_1(z) D_2(z))^2 \chi(z) & 1 \end{pmatrix} N_0^{-1}(z) D_{\infty}^{-\sigma_3},
		\qquad z \in i \mathbb R,
\end{equation}
with $j_1$ on $i \mathbb R^+$ and $j_1$ on $i \mathbb R^-$.

Then the RH problem for the local parametrix $P$ at the origin is:
\begin{rhp} \label{RHforP}
\begin{itemize}
\item[\rm 1)] $P : \{ z\in \mathbb C \mid -1 < \Re z < 1 \} \setminus i \mathbb R \to \mathbb C^{2 \times 2}$ is analytic.
\item[\rm 2)] $P$ satisfies the jump condition
\begin{equation} \label{jumpP}
 P_+(z)= P_-(z) J_P(z), \quad z\in i \mathbb R,
\end{equation}
where $J_P(z)$ is given by \eqref{jump:P}.
\item[\rm 3)] $P(z) = I + \mathcal{O} \left( \epsilon_n \right)$ as $n \to \infty$ 
uniformly for $|z| = 3 \varepsilon$ with $\epsilon_n$ given by \eqref{epsilonn}. 
\end{itemize}
\end{rhp}

\begin{proposition} \label{propo8}
The RH problem \ref{RHforP} has a solution for $n$ large enough.
\end{proposition}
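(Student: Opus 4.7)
The plan is to construct $P$ via the standard small-norm theory for Riemann--Hilbert problems on $i\mathbb R$. Writing $J_P = I + \Delta_P$, the jump condition $P_+ = P_- J_P$ is equivalent to a singular integral equation on $i\mathbb R$ that has a unique solution once $\Delta_P$ is sufficiently small in $L^2(i\mathbb R)\cap L^\infty(i\mathbb R)$, with
\[
P(z) = I + \frac{1}{2\pi i}\int_{i\mathbb R}\frac{(I+\mu(s))\Delta_P(s)}{s-z}\,ds
\]
for the corresponding density $\mu$, and $P(z) = I + \mathcal{O}(\|\Delta_P\|_{L^2\cap L^\infty})$ uniformly for $z$ bounded away from $\supp\Delta_P$. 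Thanks to the cut-off $\chi$ in \eqref{jump:P}, $\Delta_P$ is supported on $[-2i\varepsilon, 2i\varepsilon]$, and the matching circle $\{|z|=3\varepsilon\}$ is comfortably outside this support; the matrices $N_0$, $N_0^{-1}$ and $D_\infty^{\pm\sigma_3}$ are uniformly bounded on $[-2i\varepsilon, 2i\varepsilon]$.

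The task therefore reduces to estimating the scalar $j_{1,2}(z)(D_1(z) D_2(z))^2$ pointwise and in $L^2$ on $(-2i\varepsilon, 2i\varepsilon)$. Using the definitions \eqref{eq:definition-j1}--\eqref{eq:definition-j2}, the relation $\varphi_-(z)=\varphi_+(z)-\pi z$ on $i\mathbb R$, the asymptotic \eqref{Wnestimate} for $W_n$, Lemma \ref{lem:D1nlimit} for $D_1$, and the behavior \eqref{D2at0} of $D_2$, one rewrites the integrand as the difference of two terms of the form $\exp(-2n\varphi_\pm(z))$ times ratios involving $K_\nu(\pm n\pi z)$ and a factor of order $z^{\pm\nu}$. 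The next step is to identify a transition scale $|z|\sim 1/(n\log n)$ and split the estimate into (a) a small-$|z|$ regime where the $K_\nu$ singularity at the origin dominates, controlled by the elementary bound $0 < K_\nu(s) \leq K_{1/2}(s)$ valid for $0\leq\nu\leq 1/2$ together with the explicit formula for $K_{1/2}$; and (b) an intermediate regime where $\Re\varphi_\pm(z)$ grows linearly in $|z|$, so that one of the two terms in $j_{1,2}$ is exponentially small while the other is balanced against its counterpart via the relation $\varphi_- = \varphi_+ - \pi z$. Matching the two regimes produces precisely the rate $\epsilon_n = n^{\nu-1/2}/(\log n)^{\nu+1/2}$ of \eqref{epsilonn}.

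The main obstacle is this matching: the $D_2$-factor carries a genuinely unbounded power $z^{\pm\nu/2}$ at $0$, and it must be absorbed by cancellations between the two exponential terms making up $j_{1,2}$. This is where the hypothesis $\nu\leq 1/2$ is essential: the exponent $\nu-\tfrac12$ in $\epsilon_n$ is non-positive only in that range, so that $\epsilon_n \to 0$, the Neumann series for the perturbed Cauchy operator converges, and one obtains both the existence of $P$ and the matching estimate $P(z)=I+\mathcal{O}(\epsilon_n)$ on $|z|=3\varepsilon$ by evaluating the Cauchy integral representation at points well separated from $\supp\Delta_P$. For $\nu>1/2$ the same estimates would diverge, consistent with the authors' remark that the small-norm construction of the local parametrix breaks down in that regime.
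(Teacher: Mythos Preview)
Your approach has a genuine gap: the standard small-norm argument does not apply here because $\Delta_P = J_P - I$ is \emph{not} small in $L^\infty$ on the negative imaginary axis. The scalar $j_{1,2}(z)(D_1(z)D_2(z))^2$ behaves very differently on the two halves of the imaginary axis, due to the asymmetry of $D_2$ in \eqref{D2at0}: for $\Im z>0$ one has $D_2(z)^2=\mathcal O(z^{\nu})$, but for $\Im z<0$ one has $D_2(z)^2=\mathcal O(z^{-\nu})$. A direct computation (this is the content of Lemmas \ref{lem:j1_j2_Bessel}--\ref{lem:bounds-eta1-eta2}) gives, for $y\in(0,2\varepsilon)$,
\[
\bigl|j_1(iy)(D_1D_2)^2(iy)\bigr|\le C\,y^{\nu}e^{-2n\Re\varphi_-(iy)},\qquad
\bigl|j_2(-iy)(D_1D_2)^2(-iy)\bigr|\le C\,(n^{2\nu}y^{\nu}+ny^{1-\nu})e^{-2n\Re\varphi_-(-iy)}.
\]
Using $\Re\varphi_-(\pm iy)\ge y\log(1/y)$ and optimizing near the transition scale $y\sim 1/(n\log n)$, the first quantity is $\mathcal O(n^{-\nu}(\log n)^{-\nu})$ in $L^\infty$, but the second is of order $n^{\nu}(\log n)^{-\nu}$, which \emph{grows} with $n$ for every $\nu>0$. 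Hence the operator $f\mapsto C_-(f\,\Delta_P)$ does not have small norm on $L^2(i\mathbb R)$, and your Neumann series need not converge. The $L^2$ norm of $\Delta_P$ is indeed $\mathcal O(\epsilon_n)$, but that alone does not give invertibility.

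The paper's proof exploits precisely this asymmetry. After conjugating by $N_0$ (with an extra anti-diagonal factor in the upper half-plane, see \eqref{Phat}), the jump of the transformed matrix $\widehat P$ becomes \emph{strictly upper triangular} on $i\mathbb R^+$ (with off-diagonal entry $\eta_1$, small) and \emph{strictly lower triangular} on $i\mathbb R^-$ (with off-diagonal entry $\eta_2$, large). Writing the jump conditions entrywise then yields two decoupled pairs of scalar integral equations involving operators $K_1:L^2([0,2i\varepsilon])\to L^2([-2i\varepsilon,0])$ and $K_2$ in the opposite direction, with $\|K_1\|=\mathcal O(n^{-\nu}(\log n)^{-\nu})$ and $\|K_2\|=\mathcal O(n^{\nu}(\log n)^{-\nu})$. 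Neither is individually small, but the compositions satisfy $\|K_1K_2\|,\|K_2K_1\|=\mathcal O((\log n)^{-2\nu})\to 0$, so $(I-K_1K_2)^{-1}$ and $(I-K_2K_1)^{-1}$ exist for large $n$. This is the missing structural idea: the triangular decoupling converts one badly-behaved $2\times2$ small-norm problem into a well-behaved iterated scalar one. The rate $\epsilon_n$ then emerges not from $\|\Delta_P\|$ directly but from the estimate $\|K_2\|\cdot\|\eta_2\|_{L^2}=\mathcal O(\epsilon_n)$ for the off-diagonal entry $\widehat P_{21}$; see Lemma \ref{lem:Phat}.
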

The rest of this subsection is devoted  to the proof of Proposition \ref{propo8}.
It takes a number of steps and it is the most technical part of the paper. 

\subsubsection{RH problem for $\widehat P$}

We introduce a matrix $\widehat{P}(z)$ in the following way:
\begin{equation} \label{Phat} 
	P(z) = \begin{cases}
	D_{\infty}^{\sigma_3} N_0(z)  \widehat P(z)  N_0(z)^{-1} D_{\infty}^{-\sigma_3}, &
		\text{for } \Im z < 0, \\[5pt]
	D_{\infty}^{\sigma_3} N_0(z) \begin{pmatrix} 0 & -1 \\ 1 & 0 \end{pmatrix} 
	 \widehat P(z) 
		\begin{pmatrix} 0 & 1 \\ -1 & 0 \end{pmatrix} N_0(z)^{-1} D_{\infty}^{-\sigma_3}, &
		\text{for } \Im z > 0.
		\end{cases}
\end{equation}

The extra factors in \eqref{Phat} for $\Im z > 0$ are introduced in order to compensate the 
jumps of $N_0$ on $[-1,1]$. Then $P$ satisfies the jump condition \eqref{jumpP} in the RH problem \ref{RHforP}
if and only if $\widehat P_+ = \widehat P_- J_{\widehat P}$, where the jump is
\begin{equation} \label{jump:Phat}
J_{\widehat P}(z) = 
	\begin{cases} \begin{pmatrix} 1 & - j_1(z) (D_1(z) D_2(z))^2 \chi(z) \\ 0 & 1 \end{pmatrix}, & \text{ for } z \in i \mathbb R^+, \\[5mm] 
	   \begin{pmatrix} 1 & 0 \\  j_2(z) (D_1(z) D_2(z))^2 \chi(z) & 1 \end{pmatrix}, & \text{ for } z \in i \mathbb R^-.
	\end{cases} \end{equation}
Note the difference in the triangularity structure. So, we look for $\widehat P$ that solves the following RH problem:
\begin{rhp} \label{RHforPhat}
\begin{itemize}
\item[1)] $\widehat P : \mathbb C \setminus i \mathbb R \to \mathbb C^{2 \times 2}$ is analytic.
\item[2)] $\widehat P$ satisfies the jump conditions
\begin{equation} \label{jumpcondition:Phat} 
	\widehat P_+(z)= \widehat P_-(z) J_{\widehat P}(z), \quad z\in i \mathbb R,
\end{equation} 
where $J_{\widehat P}(z)$ is given by \eqref{jump:Phat}.
\item[3)] $\widehat P(z) = I + \mathcal{O}(1/z)$ as $z  \to \infty$.
\end{itemize}
\end{rhp}

Our aim is to show that the RH problem for $\widehat P$ has a solution for $n$ sufficiently large, and
that this solution satisfies in addition
\begin{itemize}
\item[4)] $\widehat P(z) = I + \mathcal{O} \left( \epsilon_n \right)$ as $n \to \infty$, uniformly for $|z| = 3 \varepsilon$.
\end{itemize}
Having $\widehat P$ we define $P$  by \eqref{Phat} in terms of $\widehat P$, and it will satisfy 
the requirements of the RH problem \ref{RHforP}. 

We prove the following result:
\begin{lemma}\label{lem:Phat}
If $0<\nu \leq 1/2$, then for $n$ large enough there exists $\widehat{P}(z)$ that solves the 
RH problem \ref{RHforPhat}, 
and as $n\to\infty$,
 \begin{equation}
|\widehat{P}_{11}(z) - 1| = \mathcal{O} \left( n^{-1/2} (\log n)^{-2\nu-1/2}\right), \quad
 |\widehat{P}_{21}(z) | = \mathcal{O} \left( n^{\nu-1/2} (\log n)^{-\nu-1/2} \right), \nonumber
\end{equation}
in $\mathbb{C}\setminus[-2i\varepsilon,0]$, and 
 \begin{equation}
 |\widehat{P}_{12}(z)| = \mathcal{O} \left( n^{-\nu-1/2}(\log n)^{-\nu-1/2} \right), \qquad
|\widehat{P}_{22}(z) - 1| = \mathcal{O} \left( n^{-1/2}  (\log n)^{-2\nu-1/2}\right),  \nonumber
 \end{equation}
in $\mathbb{C}\setminus[0,2i\varepsilon]$.
\end{lemma}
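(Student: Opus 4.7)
The plan is to reduce the Riemann--Hilbert problem \ref{RHforPhat} to a singular integral equation of Fredholm type and solve it by a small-norm argument, in the spirit of Kriecherbauer--McLaughlin \cite{KMcL} and Bleher--Bothner \cite{BB}. Because of the cut-off function $\chi$, the jump matrix $J_{\widehat P}$ equals the identity outside the segment $[-2i\varepsilon, 2i\varepsilon]$, so only the local behaviour near the origin matters. Writing $\widehat P = I + \mathcal{C}(\mu)$ with $\mathcal{C}$ the Cauchy integral operator on $L^2(i\mathbb R)$, the RH problem becomes the equation
\[
	(I - \mathcal{K})\mu = \mathcal{C}_-(J_{\widehat P} - I),\qquad \mathcal{K}(f) := \mathcal{C}_-\bigl(f\,(J_{\widehat P} - I)\bigr),
\]
which is uniquely solvable by Neumann series as soon as $\|J_{\widehat P} - I\|_{L^2 \cap L^\infty(i\mathbb R)}$ is small enough that $\mathcal{K}$ is a contraction on $L^2(i\mathbb R)$.

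The bulk of the work is then to estimate the single nontrivial entry
\[
	\Delta_\pm(iy) := j_{1,2}(iy)\,\bigl(D_1(iy)D_2(iy)\bigr)^2\,\chi(iy),\qquad \pm y > 0.
\]
For this I would combine four ingredients: the near-origin expansion $K_\nu(w) \sim \tfrac{\Gamma(\nu)}{2^{1-\nu}} w^{-\nu}$ from \eqref{near0Kv}; the behaviour \eqref{D2at0} of $D_2$ near $0$, which gives $D_2(iy)^2 = \mathcal{O}(|y|^{\pm\nu})$ on the two sides of the axis; the asymptotics of $D_1$ from Lemma \ref{lem:D1nlimit}; and the key observation that, because the analytic density $\psi(z) \sim -\tfrac{1}{\pi}\log z$ as $z \to 0$, the exponential combinations $e^{-2n\varphi_\pm(iy)}/W_{n,\pm}(iy)$ appearing in \eqref{eq:definition-j1}--\eqref{eq:definition-j2} are algebraic in $n|y|$ with a logarithmic correction. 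Piecing this together I expect the pointwise estimate
\[
	|\Delta_\pm(iy)| \leq C\,\frac{|y|^{\pm\nu}}{(1 + n|y|)^{\nu + 1/2}\,|\log(n|y|)|^{\nu + 1/2}}\,\chi(iy),
\]
which after integration along $i\mathbb R$ yields $\|\Delta_\pm\|_{L^2 \cap L^\infty(i\mathbb R)} = \mathcal{O}(\epsilon_n)$ with $\epsilon_n$ as in \eqref{epsilonn}. The hypothesis $\nu \leq 1/2$ enters precisely here, as the integrability of the $|y|^{-\nu}$ factor near $0$ and the cancellation between the singularities of $W_n^{-1}$ and $D_2$ only close for this range.

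Once existence is in hand, the representation
\[
	\widehat P(z) = I + \frac{1}{2\pi i}\int_{i\mathbb R}\frac{\widehat P_-(s)\,\bigl(J_{\widehat P}(s) - I\bigr)}{s - z}\,ds
\]
delivers the entrywise bounds stated in the lemma. The triangularity of $J_{\widehat P}$ — upper triangular on $i\mathbb R^+$ and lower triangular on $i\mathbb R^-$ — implies that the first column of $\widehat P$ is analytic across $i\mathbb R^+$ and the second column is analytic across $i\mathbb R^-$; hence the first column extends analytically to $\mathbb C \setminus [-2i\varepsilon, 0]$ and the second to $\mathbb C \setminus [0, 2i\varepsilon]$. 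Inserting the pointwise bound for $\Delta_\pm$ into the Cauchy integral and recalling that $D_2$ contributes a factor $|y|^{\nu}$ on $i\mathbb R^+$ and $|y|^{-\nu}$ on $i\mathbb R^-$ produces the asymmetric rates $n^{\nu-1/2}(\log n)^{-\nu-1/2}$ for $\widehat P_{21}$ and $n^{-\nu-1/2}(\log n)^{-\nu-1/2}$ for $\widehat P_{12}$. The diagonal estimates are obtained by iterating the Neumann series once more: two passes through the off-diagonal combine the two rates and yield $n^{-1/2}(\log n)^{-2\nu-1/2}$ for $\widehat P_{11} - 1$ and $\widehat P_{22} - 1$.

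The main obstacle is the sharp local analysis of $\Delta_\pm$ at $z = 0$, where three competing singularities — the logarithmic divergence of $\psi$, the $|z|^{\pm\nu/2}$ behaviour of $D_2$, and the $|z|^{-\nu}$ singularity of $W_n^{-1}$ — must be balanced while tracking the $n$-dependence with enough precision to produce the $(\log n)^{-\nu - 1/2}$ denominator in $\epsilon_n$. It is exactly this balance that breaks at $\nu = 1/2$, consistent with the failure of the method beyond this threshold noted in the introduction.
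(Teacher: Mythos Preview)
Your overall strategy---reduce to a singular integral equation and invoke a Neumann series---is right in spirit, and you correctly read off from the triangular structure that the first column of $\widehat P$ is analytic across $i\mathbb R^+$ and the second across $i\mathbb R^-$. But there is a genuine gap at the heart of the argument: the claim that $\|\Delta_\pm\|_{L^2\cap L^\infty(i\mathbb R)} = \mathcal{O}(\epsilon_n)$ is \emph{false} on the negative imaginary axis, and consequently your operator $\mathcal{K}$ is not a contraction on $L^2(i\mathbb R)$. The asymmetry comes from $D_2$: by \eqref{D2at0} one has $D_2(z)^2=\mathcal{O}(|z|^{\nu})$ for $\Im z>0$ but $\mathcal{O}(|z|^{-\nu})$ for $\Im z<0$. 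The careful analysis in Lemma~\ref{lem:bounds-eta1-eta2} gives the sharp bound
\[
	|\eta_2(-iy)| \;\leq\; C\,(n^{2\nu}y^\nu + n y^{1-\nu})\,e^{-2n\Re\varphi_-(-iy)},\qquad y>0,
\]
and since $\Re\varphi_\pm(iy)\geq |y|\log(1/|y|)$, evaluating near $y\sim 1/(n\log n)$ yields $|\eta_2|\sim n^{\nu}(\log n)^{-\nu}$, which \emph{grows} with $n$. (Your own proposed pointwise bound, with $|y|^{-\nu}$ in the numerator and only polynomial decay in $n|y|$, already shows this if you read it at $|y|\sim 1/n$.) Hence $\|J_{\widehat P}-I\|_{L^\infty}$ is not small, and the standard $L^2\cap L^\infty$ small-norm machinery fails at the first step.

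The paper gets around this precisely by building your ``two passes through the off-diagonal'' intuition into the existence argument itself, not merely into the estimates. Using the triangular structure from the outset, the entrywise jump relations \eqref{jumps_entries_Phat}--\eqref{hatP11P12} reduce the problem to scalar equations governed by two operators $K_1:L^2([0,2i\varepsilon])\to L^2([-2i\varepsilon,0])$ and $K_2:L^2([-2i\varepsilon,0])\to L^2([0,2i\varepsilon])$, built from $\eta_1$ and $\eta_2$ respectively. One finds $\|K_1\|=\mathcal{O}(n^{-\nu}(\log n)^{-\nu})$ but $\|K_2\|=\mathcal{O}(n^{\nu}(\log n)^{-\nu})$; neither is small by itself, yet only the \emph{compositions} $K_1K_2$ and $K_2K_1$ appear in the fixed-point equations for the individual entries, and $\|K_1\|\,\|K_2\|=\mathcal{O}((\log n)^{-2\nu})\to 0$. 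Inverting $I-K_2K_1$ and $I-K_1K_2$ then gives existence, and the asymmetric off-diagonal rates $n^{\pm\nu-1/2}(\log n)^{-\nu-1/2}$ drop out of $\|\eta_1\|_2\,\|\widehat P_{11}\|_2$ and $\|\eta_2\|_2\,\|\widehat P_{22}\|_2$ via Cauchy--Schwarz. A secondary point: the decay you need is super-polynomial, coming from $e^{-2n\Re\varphi}$ with $\Re\varphi\geq |y|\log(1/|y|)$, not of the form $(1+n|y|)^{-a}|\log(n|y|)|^{-b}$; the $(\log n)$ powers in the final rates arise from the integral estimate of Lemma~\ref{lem:bound-integral}.
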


\begin{remark}
It follows from Lemma \ref{lem:Phat} that $\widehat P(z) = I + \mathcal{O} \left( \epsilon_n \right)$ 
as $n \to \infty$, uniformly for $|z| = 3 \varepsilon$, and because of \eqref{Phat}, the same holds for $P(z)$.
\end{remark}

In the proof of this lemma we will need the following steps:
\begin{enumerate}
 \item We write the jump conditions for $\widehat{P}(z)$ componentwise, and in terms of two integral 
operators $K_1$ and $K_2$.
 \item We estimate the operator norms $\|K_1\|$ and $\|K_2\|$ as $n\to\infty$. This requires estimates for 
the functions $j_1(z)$, $j_2(z)$, $D_1(z)$ and $D_2(z)$, which are uniform as $n\to\infty$ for $y$ in a 
fixed interval around the origin on the imaginary axis. 
 \item We show that the operators $I-K_2K_1$ and $I-K_1K_2$ are invertible for $n$ large enough, and this 
gives the existence and asymptotics of $\widehat{P}$. 
\end{enumerate}

Finally, the estimates for $\widehat{P}(z)$ are used to prove that the matrix $R(z)$, which will
be defined in Section \ref{finaltrans} and which solves the 
Riemann--Hilbert problem \ref{RHforR}, is close to the identity matrix as $n\to\infty$. 

\subsubsection{Integral operators} 
Let us write
\begin{equation}
	\begin{aligned} \label{eta12z}
	\eta_1(z) & = - j_1(z) (D_1(z) D_2(z))^2 \chi(z), & z \in i \mathbb R^+,\\
	\eta_2(z) & = j_2(z) (D_1(z) D_2(z))^2 \chi(z), & z \in i \mathbb R^-.  
	\end{aligned}
\end{equation} 
These functions depend on $n$, since $j_1$, $j_2$ and $D_1$ depend on $n$. Note, however, that $D_2$
and $\chi$ do not depend on $n$.

The jump condition \eqref{jump:Phat}-\eqref{jumpcondition:Phat}  yields that for $j=1,2$,
\begin{equation}\label{jumps_entries_Phat}
 \begin{aligned}
 \widehat{P}_{j1+}(z) &= \begin{cases} \widehat{P}_{j1-}(z), & \text{ for } z \in i \mathbb R^+, \\
\widehat{P}_{j1-}(z) +  \eta_2(z) \widehat{P}_{j2-}(z), & \text{ for } z \in i \mathbb R^-, 
\end{cases}\\
\widehat{P}_{j2+}(z) &= \begin{cases} \widehat{P}_{j2-}(z) + \eta_1(z) \widehat{P}_{j1-}(z), & 
\text{ for } z \in i \mathbb R^+, \\
\widehat{P}_{j2-}(z), & \text{ for } z \in i \mathbb R^-.
\end{cases} 
  \end{aligned}
\end{equation}

Since $\chi(z) = 0$ for $|z| \geq 2 \varepsilon$, we find that $\widehat{P}_{j1}$ is analytic 
in $\mathbb C \setminus [-2i \varepsilon, 0] $, and $\widehat{P}_{j2}$ is analytic in $\mathbb C \setminus [0,2i\varepsilon]$. 
Then by the Sokhotski-Plemelj formula and the asymptotic condition $\widehat P(z) \to I$ as $z \to \infty$, we get
\begin{equation}\label{hatP11P12}
\begin{aligned}
\widehat{P}_{11}(z) & = 1 + \frac{1}{2\pi i} \int_{-2i \varepsilon}^0 \frac{\eta_2(s) \widehat{P}_{12}(s)}{s-z} ds, &
\widehat{P}_{12}(z) & = \frac{1}{2\pi i} \int_0^{2i \varepsilon} \frac{\eta_1(s) \widehat{P}_{11}(s)}{s-z} ds. \\
\widehat{P}_{21}(z) & = \frac{1}{2\pi i} \int_{-2i \varepsilon}^0 \frac{\eta_2(s) \widehat{P}_{22}(s)}{s-z} ds, &
\widehat{P}_{22}(z) & = 1 + \frac{1}{2\pi i} \int_0^{2i \varepsilon} \frac{\eta_1(s) \widehat{P}_{21}(s)}{s-z} ds. 
\end{aligned} 
\end{equation}

We can write the equations in operator form if we introduce two operators
\[ K_1 : L^2([0,2i \varepsilon]) \to L^2([-2i \varepsilon,0]) \qquad \text{ and } \qquad
 K_2 : L^2([-2i \varepsilon,0]) \to L^2([0,2i \varepsilon]) \]
by
\begin{align} \label{K12} 
	(K_1 f)(z) & = \frac{1}{2\pi i}  \int_0^{2i \varepsilon} \frac{\eta_1(s) f(s)}{s-z} ds,
		\qquad f \in L^2([0,2i \varepsilon]), \\ 
	(K_2 g)(z) & = \frac{1}{2\pi i} \int_{-2i \varepsilon}^0 \frac{\eta_2(s) g(s)}{s-z} ds,
		\qquad g \in L^2([-2i \varepsilon,0]).
\end{align}
Then $f_1 = \widehat{P}_{11}$, $g_1 = \widehat{P}_{12}$ should solve
\begin{equation} 
\label{eq:formulas-f1-g1}
	f_1 = 1 + K_2 g_1, \quad g_1 = K_1 f_1 
\end{equation}
and $f_2 = \widehat{P}_{21}$, $g_2 = \widehat{P}_{22}$ should solve
\begin{equation}
\label{eq:formulas-f2-g2}
	f_2= K_2 g_2, \quad g_2= 1 + K_1 f_2. 
\end{equation}

Both $K_1$ and $K_2$ are integral operators between Hilbert spaces with operator norms
\begin{align*} 
	\| K_1 \|^2 =  \int_{-2i \varepsilon}^0 \int_0^{2i \varepsilon} \frac{|\eta_1(s)|^2}{|s-t|^2}  |ds| |dt|, \\
	\| K_2 \|^2 = \int_0^{2i \varepsilon} \int_{-2i \varepsilon}^0 \frac{|\eta_2(s)|^2}{|s-t|^2}  |ds| |dt|.
\end{align*}
The $t$-integrals can be done explicitly. This leads to the estimates (we also change to 
a real integration  variable by putting $s = \pm iy$)
\begin{equation} 
\label{eq:norms-K1-K2}
	\| K_1 \| \leq \left( \int_0^{2\varepsilon}  \frac{|\eta_1(iy)|^2}{y} dy \right)^{1/2}, \quad 
	\| K_2 \| \leq  \left( \int_0^{2\varepsilon} \frac{|\eta_2(-iy)|^2}{y} dy \right)^{1/2}. 
\end{equation}

The next step is to show that both integrals are finite (so that $K_1$ and $K_2$ are 
well-defined bounded operators) and that $\| K_1 K_2 \|$ and $\| K_2 K_1 \|$ tend to $0$ as $n \to \infty$. 
To this end, we need to control the functions $\eta_1$ and $\eta_2$, defined in \eqref{eta12z}.

\subsubsection{The functions $\eta_1(z)$ and $\eta_2(z)$}

The functions $\eta_1$ and $\eta_2$ are defined in terms of $j_1$, $j_2$, $D_1$ and $D_2$, see \eqref{eta12z}. 
In this section we obtain estimates for all these functions for large $n$.

First we write the functions $j_1(z)$ and $j_2(z)$ in terms of Bessel functions. Because of the property 
$K_{\nu}(\overline{z})=\overline{K_{\nu}(z)}$ for real $\nu$, see \cite[\S 10.34.7]{DLMF}, if 
we consider the positive imaginary axis and we write $z=iy$, with $y>0$, then the function $W_n$ 
(recall \eqref{analyticW}) can be written as 
\begin{equation}
\label{eq:Wpm-imaginary}
W_{n,\pm}(iy)= \sqrt{2n} K_{\nu}(\mp n\pi iy)e^{\mp n\pi iy},
\end{equation}
so $W_{n,+}(iy)=\overline{W_{n,-}(iy)}$. Similarly, on the negative imaginary axis,
\begin{equation}
\label{eq:Wpm-imaginary2}
 W_{n,\pm}(-iy)= \sqrt{2n} K_{\nu}(\pm n\pi iy)e^{\mp n\pi iy},
\end{equation}
so again $W_{n,+}(-iy)=\overline{W_{n,-}(-iy)}$. Additionally, we have
\begin{equation}\label{WHankel}
\begin{aligned}
|W_{n,-}(iy)|^2 &= 2n |K_{\nu}(n\pi i y)|^2=\frac{n \pi^2}{2}|H^{(2)}_{\nu}(n\pi y)|^2=
\frac{n \pi^2}{2}\left[J_{\nu}(n\pi y)^2+Y_{\nu}(n\pi y)^2\right],\\
|W_{n,-}(-iy)|^2 &= 2n |K_{\nu}(-n\pi i y)|^2=\frac{n\pi^2}{2}|H^{(1)}_{\nu}(n\pi y)|^2=
\frac{n \pi^2}{2}\left[J_{\nu}(n\pi y)^2+Y_{\nu}(n\pi y)^2\right],
\end{aligned}
\end{equation}
in terms of Hankel functions, see \cite[\S 10.27.8]{DLMF}. We have the following auxiliary result:
\begin{lemma} \label{lem:j1_j2_Bessel}
For $y>0$, the functions $j_1(iy)$ and $j_2(-iy)$ can be written as follows:
\begin{equation*}
\begin{aligned}
 |j_1(iy)|&=\frac{2e^{-2n\Re \varphi_-(iy)}}{\sqrt{2n} \pi}
	\frac{|J_{\nu}(n\pi y)\cos\nu\pi-Y_{\nu}(n\pi y)\sin \nu\pi|}{J^2_{\nu}(n\pi y)+Y^2_{\nu}(n\pi y)},\\
 |j_2(-iy)|&=\frac{2e^{-2n\Re \varphi_-(-iy)}}{\sqrt{2n} \pi}\frac{|J_{\nu}(n\pi y)|}{J^2_{\nu}(n\pi y)+Y^2_{\nu}(n\pi y)}.
\end{aligned}
\end{equation*}
\end{lemma}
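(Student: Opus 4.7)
The proof is a direct algebraic verification: the lemma is simply a repackaging of the definitions \eqref{eq:definition-j1}--\eqref{eq:definition-j2} that converts everything into the real-valued Bessel functions $J_\nu$ and $Y_\nu$, in a form amenable to the subsequent $L^2$-estimates of $\|K_1\|$ and $\|K_2\|$ in \eqref{eq:norms-K1-K2}. Three ingredients enter: the boundary values \eqref{eq:Wpm-imaginary}--\eqref{eq:Wpm-imaginary2} of $W_n$ on the imaginary axis, the jump $\varphi_-(z)=\varphi_+(z)-\pi z$ on $i\mathbb R$, and the Hankel representations $K_\nu(in\pi y)=-\tfrac{\pi i}{2}e^{-\nu\pi i/2}H^{(2)}_\nu(n\pi y)$ and $K_\nu(-in\pi y)=\tfrac{\pi i}{2}e^{\nu\pi i/2}H^{(1)}_\nu(n\pi y)$, valid for $y>0$ and obtained from DLMF~10.27.7--10.27.8.

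First I would substitute \eqref{eq:Wpm-imaginary} into the definition \eqref{eq:definition-j1} of $j_1(iy)$ and use $e^{-2n\varphi_+(iy)}=e^{-2n\varphi_-(iy)}e^{-2n\pi iy}$ to factor out the common prefactor $e^{-2n\varphi_-(iy)}e^{-n\pi iy}/\sqrt{2n}$. The remaining bracket is
\[
 \frac{e^{\nu\pi i/2}}{K_\nu(in\pi y)}-\frac{e^{-\nu\pi i/2}}{K_\nu(-in\pi y)},
\]
and after inserting the Hankel representations it combines, over the common denominator $H^{(1)}_\nu(n\pi y)H^{(2)}_\nu(n\pi y)=J_\nu^2(n\pi y)+Y_\nu^2(n\pi y)$, to a multiple of $e^{\nu\pi i}H^{(1)}_\nu+e^{-\nu\pi i}H^{(2)}_\nu$, which expands via $H^{(1),(2)}_\nu=J_\nu\pm iY_\nu$ into $2\bigl[J_\nu\cos(\nu\pi)-Y_\nu\sin(\nu\pi)\bigr]$. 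Taking absolute values, and noting that $|e^{-n\pi iy}|=1$ and $\Re\varphi_-(iy)=\Re\varphi_+(iy)$ because the jump $-\pi z$ is purely imaginary on $i\mathbb R$, yields the stated formula for $|j_1(iy)|$.

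The calculation for $|j_2(-iy)|$ runs in parallel from \eqref{eq:definition-j2} and \eqref{eq:Wpm-imaginary2}, using $\varphi_+(-iy)=\varphi_-(-iy)-\pi iy$. The only structural difference is that the exterior factors $e^{\pm\nu\pi i/2}$ in \eqref{eq:definition-j2} now cancel against the $e^{\mp\nu\pi i/2}$ prefactors carried by the Hankel representations of the appropriate $K_\nu$, so the bracket simplifies to a multiple of $H^{(1)}_\nu(n\pi y)+H^{(2)}_\nu(n\pi y)=2J_\nu(n\pi y)$. Dividing by the same $J_\nu^2+Y_\nu^2$ delivers the cleaner formula with $|J_\nu|$ in the numerator. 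No analytic obstacle arises anywhere in this lemma; the only care needed is in the sign bookkeeping and in picking the correct DLMF connection formula in each of the four branches of the axis. The explicit form produced here is precisely what is needed for the next step, where the classical large-argument asymptotics of $J_\nu^2+Y_\nu^2$ together with the behaviour of $\Re\varphi_-$ on the imaginary axis will yield the operator-norm bounds required for the small-norm argument.
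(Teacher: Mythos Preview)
Your proposal is correct and follows essentially the same route as the paper: substitute the boundary values of $W_n$ and the jump relation $\varphi_-=\varphi_+-\pi z$ into the definitions of $j_1,j_2$, then convert $K_\nu(\pm in\pi y)$ to Hankel functions via DLMF~10.27.8 and simplify using $H^{(1)}_\nu H^{(2)}_\nu=J_\nu^2+Y_\nu^2$. The only cosmetic difference is that the paper exploits $W_{n,+}=\overline{W_{n,-}}$ to write the bracket as $-2i\,\Im[\,\cdot\,]$ before inserting the Hankel formula, whereas you insert the Hankel formulas into both terms first and combine over the common denominator; the two computations are line-by-line equivalent.
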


\begin{proof}
It follows from \eqref{eq:definition-j1} that $j_1$ can be written as
\begin{equation*}
	j_1(iy) =\frac{e^{-2n\varphi_-(iy)-n\pi iy}}{W_{n,-}(iy)W_{n,+}(iy)}
	\left[e^{\frac{\nu\pi i}{2}+n\pi iy}W_{n,+}(iy) -e^{-\frac{\nu\pi i}{2}-n\pi iy}W_{n,-}(iy)\right],
\end{equation*}
and because of $\varphi_-(z)=\varphi_+(z)-\pi z$ on the imaginary axis, and the fact that 
$W_{n,+}(iy)=\overline{W_{n,-}(iy)}$, the two terms on the right hand side are complex conjugates, so
\begin{equation}\label{j1Im}
 j_1(iy)=\frac{-2i e^{-2n\varphi_-(iy)-n\pi iy}}{|W_{n,-}(iy)|^2} 
	\Im \left[e^{-\frac{\nu\pi i}{2}-n\pi iy}W_{n,-}(iy)\right].
\end{equation}

Using the formula
\begin{equation*}
 K_{\nu}(z)=-\frac{\pi i}{2}e^{-\frac{\nu\pi i}{2}} H_{\nu}^{(2)}(ze^{-\frac{\pi i}{2}}), \qquad 
-\frac{\pi}{2}< \arg  z\leq \pi,
\end{equation*}
in terms of Hankel functions, see \cite[\S 10.27.8]{DLMF} and \eqref{eq:Wpm-imaginary} we observe that
\begin{equation*}
 e^{-\frac{\nu\pi i}{2}-n\pi iy}W_{n,-}(iy)=e^{-\frac{\nu\pi i}{2}} \sqrt{2n} K_{\nu}(n\pi iy)
=-\frac{\sqrt{2n} \pi i\, e^{-\nu\pi i}}{2}\left(J_{\nu}(n\pi y)-iY_{\nu}(n\pi y)\right).
\end{equation*}
Hence, on the positive imaginary axis,
\begin{equation*}
 \Im \left[e^{-\frac{\nu\pi i}{2}-n\pi iy}W_{n,-}(iy)\right]
=-\frac{\sqrt{2n}\pi}{2}(J_{\nu}(n\pi y)\cos\nu\pi-Y_{\nu}(n\pi y)\sin \nu\pi).
\end{equation*}

Using \eqref{j1Im} and \eqref{WHankel}, this proves the first formula. Similarly, for $y>0$,
\begin{equation}\label{j2Im}
 j_2(-iy)=
 \frac{2i e^{-2n\varphi_-(-iy)-n\pi iy}}{|W_{n,-}(-iy)|^2} \Im \left[e^{-\frac{\nu\pi i}{2}-n\pi iy}W_{n,-}(-iy)\right].
\end{equation}

In this case, we use 
\begin{equation*}
 K_{\nu}(z)=\frac{\pi i}{2}e^{\frac{\nu\pi i}{2}} H_{\nu}^{(1)}(ze^{\frac{\pi i}{2}}), \qquad 
	-\pi<\arg  z\leq \frac{\pi}{2},
\end{equation*}
see \cite[10.27.8]{DLMF}, and \eqref{eq:Wpm-imaginary2} to obtain
\begin{equation*}
 e^{-\frac{\nu\pi i}{2}-n\pi iy}W_{n,-}(-iy)=e^{-\nu\pi i/2} \sqrt{2n} K_{\nu}(-n\pi iy)
	=\frac{\sqrt{2n} \pi i}{2}\left(J_{\nu}(n\pi y)+iY_{\nu}(n\pi y)\right),
\end{equation*}
so
\begin{equation*}
 \Im \left[e^{\frac{-\nu\pi i}{2}-n\pi iy}W_{n,-}(-iy)\right]
=\frac{\sqrt{2n} \pi}{2}J_{\nu}(n\pi y).
\end{equation*}

We use \eqref{j2Im} and \eqref{WHankel}, and this completes the proof. 
\end{proof}
Next, we will obtain estimates of the previous functions $j_1$ and $j_2$ for large $n$.

\begin{lemma}\label{lem:asymptotic_j1_j2}
 For $0<\nu\leq 1/2$ there exist constants $C_\nu, C'_\nu>0$ such that for all $s > 0$ we have
\begin{equation*}
\begin{aligned}
 \frac{|J_{\nu}(s)\cos\nu\pi-Y_{\nu}(s)\sin \nu\pi|}{J_{\nu}(s)^2+Y_{\nu}(s)^2}
 &\leq C_{\nu}\, \frac{s^{\nu}(1+s^{1-2\nu})}{1+s^{1/2-\nu}},\\
 \frac{|J_{\nu}(s)|}{J_{\nu}(s)^2+Y_{\nu}(s)^2}&\leq C'_{\nu}\, \frac{s^{3\nu}(1+s^{1-2\nu})}{1+s^{1/2+\nu}}.
 \end{aligned}
\end{equation*}
 \end{lemma}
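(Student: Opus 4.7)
The plan is to establish both inequalities by splitting the range $s>0$ into a neighborhood of $0$, a compact middle range, and a neighborhood of $\infty$, and applying the classical small- and large-argument asymptotics of $J_\nu$ and $Y_\nu$. The denominator $J_\nu(s)^2+Y_\nu(s)^2$ is continuous and strictly positive on $(0,\infty)$, so both left-hand sides are continuous positive functions of $s$, and on any compact subinterval $[\delta,M]\subset(0,\infty)$ each ratio is bounded by a constant multiple of the right-hand side. Hence it suffices to verify the bounds as $s\to 0^+$ and as $s\to\infty$ and then take the maximum of the three constants.

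For $s\to 0^+$, I would use $J_\nu(s)=\frac{(s/2)^\nu}{\Gamma(\nu+1)}(1+O(s^2))$ and, for $0<\nu<1$, $Y_\nu(s)=-\frac{\Gamma(\nu)}{\pi}(2/s)^\nu(1+o(1))$, from \cite[\S 10.7]{DLMF}. This gives $J_\nu(s)^2+Y_\nu(s)^2\sim \frac{\Gamma(\nu)^2}{\pi^2}(2/s)^{2\nu}$. Since $\sin\nu\pi>0$ for $0<\nu\leq 1/2$ and the two summands in $J_\nu(s)\cos\nu\pi-Y_\nu(s)\sin\nu\pi$ have genuinely different orders $s^\nu$ and $s^{-\nu}$, there is no cancellation: the first numerator is of order $s^{-\nu}$, so the first ratio is $O(s^\nu)$. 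Similarly the second numerator is of order $s^\nu$, so the second ratio is $O(s^{3\nu})$. A direct check shows that the RHS of the first bound behaves like $s^\nu$ and that of the second like $s^{3\nu}$ as $s\to 0^+$ (recall $1-2\nu\geq 0$ and $1/2-\nu\geq 0$ when $\nu\leq 1/2$), so both bounds hold near $0$ with constants depending only on $\nu$.

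For $s\to\infty$, I would use the standard expansions
$J_\nu(s)=\sqrt{2/(\pi s)}\cos(s-\tfrac{\nu\pi}{2}-\tfrac{\pi}{4})+O(s^{-3/2})$ and
$Y_\nu(s)=\sqrt{2/(\pi s)}\sin(s-\tfrac{\nu\pi}{2}-\tfrac{\pi}{4})+O(s^{-3/2})$,
which yield $J_\nu(s)^2+Y_\nu(s)^2=\frac{2}{\pi s}(1+O(s^{-2}))$. The elementary Cauchy--Schwarz type bound
\[ |J_\nu(s)\cos\nu\pi-Y_\nu(s)\sin\nu\pi|\leq \sqrt{J_\nu(s)^2+Y_\nu(s)^2}, \qquad |J_\nu(s)|\leq \sqrt{J_\nu(s)^2+Y_\nu(s)^2}, \]
then makes both ratios $O(s^{1/2})$. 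On the other hand, both right-hand side expressions are of order $s^{1/2}$ for large $s$ (the exponents balance: $\nu+(1-2\nu)-(1/2-\nu)=1/2$ and $3\nu+(1-2\nu)-(1/2+\nu)=1/2$), so the inequalities again hold. Combining the three regimes proves the lemma. The argument is essentially routine bookkeeping with Bessel asymptotics; the only subtlety is the small-$s$ analysis of the first numerator, where one must check that no cancellation occurs between $J_\nu\cos\nu\pi$ and $Y_\nu\sin\nu\pi$, which is guaranteed because these terms have different leading orders at the origin.
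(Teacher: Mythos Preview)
Your proposal is correct and follows essentially the same route as the paper: both arguments use the standard small-$s$ and large-$s$ asymptotics of $J_\nu$ and $Y_\nu$ to identify the orders $s^\nu$, $s^{3\nu}$ near $0$ and $s^{1/2}$ near $\infty$, and invoke continuity and strict positivity of $J_\nu^2+Y_\nu^2$ to cover the rest. The only cosmetic difference is that the paper packages the three-regime argument into uniform interpolating bounds of the shape $s^a/(1+s^b)$ for the numerator and denominator separately and then divides, whereas you treat the three ranges $(0,\delta]$, $[\delta,M]$, $[M,\infty)$ explicitly.
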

\begin{proof}
For the proof, we consider the following expansions: as $s\to 0^+$,
\begin{equation}\label{asympJ0}
 J_{\nu}(s)=\frac{s^{\nu}}{2^{\nu}\Gamma(\nu+1)}\left(1+\mathcal{O}\left(s^{-1}\right)\right),  \quad \nu\neq -1,-2,\ldots
\end{equation}
and for $\nu<1$ we have
\begin{equation}\label{asympY0}
 Y_{\nu}(s)=-\frac{\Gamma(\nu)}{\pi}\left(\frac{s}{2}\right)^{-\nu}
+  \mathcal{O}(s^{\nu}).
\end{equation}

As $s\to\infty$, we have
\begin{equation}\label{asympJYinf}
J_{\nu}(s)= \left(\frac{2}{\pi s}\right)^{1/2}\cos\omega \, \left(1+\mathcal{O}\left(s^{-1}\right)\right), \qquad 
Y_{\nu}(s)= \left(\frac{2}{\pi s}\right)^{1/2}\sin\omega \, \left(1+\mathcal{O}\left(s^{-1}\right)\right),
\end{equation}
where $\omega=s-\frac{\nu\pi}{2}-\frac{\pi}{4}$.
See for instance \cite[formulas 10.7.3--4, 10.17.3--4]{DLMF}.

From this, it follows that
\begin{equation}\label{asymp:Mnu}
\begin{aligned}
J_{\nu}(s)^2+Y_{\nu}(s)^2 & = \frac{\Gamma(\nu)^2}{\pi^2}\left(\frac{s}{2}\right)^{-2\nu} + \mathcal{O}(1),  &  s\to 0,\\
J_{\nu}(s)^2+Y_{\nu}(s)^2 & =  \frac{2}{\pi s}+\mathcal{O}\left(s^{-2}\right), & s\to \infty.
\end{aligned}
\end{equation}

From \eqref{asymp:Mnu}, we claim that there exist two constants $C_{1,\nu},C_{2,\nu}>0$ such that
\begin{equation*}
 C_{1,\nu}\,\frac{s^{-2\nu}}{1+s^{1-2\nu}}\leq J_{\nu}(s)^2+Y_{\nu}(s)^2 \leq C_{2,\nu}\,\frac{s^{-2\nu}}{1+s^{1-2\nu}}, \qquad s>0.
\end{equation*}

Using a similar argument, we have
\begin{equation*}
|J_{\nu}(s)|\leq C_{3,\nu}\,\frac{s^{\nu}}{1+s^{1/2-\nu}},
\end{equation*}
and also
\begin{equation*}
|J_{\nu}(s)\cos\nu\pi-Y_{\nu}(s)\sin \nu\pi|\leq C_{4,\nu}\,\frac{s^{-\nu}}{1+s^{1/2-\nu}},
\end{equation*}
and putting all the estimates together we get the bounds in the lemma.
\end{proof}

As a consequence of Lemma \ref{lem:j1_j2_Bessel} and Lemma \ref{lem:asymptotic_j1_j2} 
we obtain the following bounds for $j_1$ and $j_2$ for $y>0$:
\begin{equation}
\label{eq:formulas-j1-j2}
\begin{aligned}
 |j_1(iy)|&\leq C_{\nu}\, \frac{2e^{-2n\Re \varphi_-(iy)}}{\sqrt{2n} \pi}   
		\frac{(n\pi y)^{\nu}(1+(n\pi y)^{1-2\nu})}{1+(n\pi y)^{1/2-\nu}},\\
 |j_2(-iy)|&\leq C'_{\nu}\, \frac{2e^{-2n\Re \varphi_-(-iy)}}{\sqrt{2n} \pi} 
			\frac{(n\pi y)^{3\nu}(1+(n\pi y)^{1-2\nu})}{1+(n\pi y)^{1/2+\nu}}.
\end{aligned}
\end{equation}

Next, we need an estimate for $D_1(z)$ (see formula \eqref{eq:D1(z)}),
with $z = iy$, $y \in [-\rho, \rho]$ where we recall that $\pm i \rho$ is the intersection
of the lens with the imaginary axis.

\begin{lemma}
\label{lem:lemmaD1}
For $0<\nu\leq 1/2$, there exists a 
constant $C_\nu$ such that for all sufficiently large $n$,
\begin{equation}\label{boundD1}
|D_1(iy)|^2\leq C_\nu\,\frac{ n^{1/2-\nu} |y|^{-\nu}}{1+(n|y|)^{1/2-\nu}}, \qquad y \in [-\rho, \rho].
 \end{equation}  
\end{lemma}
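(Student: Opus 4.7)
The plan is to write $\log|D_1(iy)|^2$ as a Poisson-type integral of $\log W_n$ on $[-1,1]$ and isolate the gain beyond the trivial $|y|^{-1/2}$ bound that comes from the $|x|^{-1/2}$ decay of $W_n$ away from the origin. From \eqref{eq:D1(z)}, using the branch identification $(z^2-1)^{1/2}|_{z=iy} = i\,\sgn(y)\sqrt{y^2+1}$ and $\frac{1}{iy-x} = \frac{-x-iy}{x^2+y^2}$, taking real parts yields
\begin{equation*}
\log|D_1(iy)|^2 \;=\; \frac{|y|\sqrt{y^2+1}}{\pi}\int_{-1}^{1}\frac{\log W_n(x)}{(x^2+y^2)\sqrt{1-x^2}}\,dx.
\end{equation*}
I would then factor out the Szeg\H{o} function of the reference weight $|x|^{-1/2}$, namely $D^*(z) = ((z+(z^2-1)^{1/2})/z)^{1/4}$, which already appears in the proof of Lemma \ref{lem:D1nlimit}. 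Direct evaluation at $z = iy$ gives $|D^*(iy)|^2 = ((|y|+\sqrt{y^2+1})/|y|)^{1/2} \le C|y|^{-1/2}$ for $|y|\le\rho$. Setting $\tilde W_n(x) = |x|^{1/2}W_n(x)$ and $E_n = D_1/D^*$, the same Poisson representation applies with $\log W_n$ replaced by $\log\tilde W_n$.

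The core estimate is the uniform pointwise bound $\tilde W_n(x) \le C_\nu\min(1,(n|x|)^{1/2-\nu})$ on $[-1,1]$, obtained by separating the regimes $n|x|\le 1$ and $n|x|\ge 1$ and applying \eqref{near0Kv} and \eqref{asympKv} respectively, as in the proof of Lemma \ref{lem:D1nlimit}. Taking logarithms, $\log\tilde W_n(x) \le \log C_\nu + (1/2-\nu)\min(0,\log(n|x|))$. Substituting into the integral for $\log|E_n(iy)|^2$, the constant part combines with the standard evaluation $\int_{-1}^1 dx/((x^2+y^2)\sqrt{1-x^2}) = \pi/(|y|\sqrt{1+y^2})$ to contribute an $\mathcal{O}(1)$ term, while the $\log(n|x|)$ part is supported on $|x|\le 1/n$. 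After the successive rescalings $u = nx$ and $v = u/(n|y|)$ this remaining piece takes the form
\begin{equation*}
\frac{1/2-\nu}{\pi}\left[\,2\log(n|y|)\arctan\tfrac{1}{n|y|} + \int_{|v|\le 1/(n|y|)}\frac{\log|v|}{1+v^2}\,dv\,\right],
\end{equation*}
whose second summand is uniformly bounded in $n|y|$ as a consequence of $\int_0^\infty \log u/(1+u^2)\,du = 0$. Using $\arctan(t)+\arctan(1/t) = \pi/2$ together with $\log t\cdot\arctan(1/t)\le \log t/t = \mathcal{O}(1)$ for $t\ge 1$, the first summand equals $(1/2-\nu)\log(n|y|) + \mathcal{O}(1)$ when $n|y|\le 1$ and is $\mathcal{O}(1)$ when $n|y|\ge 1$. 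Therefore $|E_n(iy)|^2 \le C\min(1,(n|y|)^{1/2-\nu})$.

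Multiplying by $|D^*(iy)|^2 \le C|y|^{-1/2}$ gives $|D_1(iy)|^2 \le C|y|^{-1/2}\min(1,(n|y|)^{1/2-\nu})$, which is equivalent to \eqref{boundD1} via the elementary inequality $\min(1,t)\le 2t/(1+t)$ applied to $t = (n|y|)^{1/2-\nu}$. The main technical obstacle will be the precise bookkeeping in the regime $n|y|\le 1$: one must verify that the negativity of $\log\tilde W_n$ localized in the small window $|x|\le 1/n$ is transferred by the Poisson kernel to the imaginary axis with the \emph{full} coefficient $(1/2-\nu)$, rather than only a fraction of it. This is exactly what the explicit rescaled computation above confirms, and it is what distinguishes the claimed bound from the much weaker $|D_1(iy)|^2 \le C|y|^{-1/2}$ that would follow from ignoring the small-$|x|$ behaviour of $W_n$.
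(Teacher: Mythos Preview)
Your proof is correct, and both your argument and the paper's start from the same Poisson representation of $\log|D_1(iy)|^2$, but they diverge from there. You factor out only the Szeg\H{o} function of $|x|^{-1/2}$ and then compute the residual integral of $\log(|x|^{1/2}W_n(x))$ directly by rescaling and invoking $\int_0^\infty \log u/(1+u^2)\,du=0$. The paper instead observes that for $0<\nu\le 1/2$ the two pointwise bounds $W_n(x)\le C_1|x|^{-1/2}$ and $W_n(x)\le C_2\,n^{1/2-\nu}|x|^{-\nu}$ each hold on the \emph{entire} interval $(0,1]$, not merely in their natural regimes $n|x|\gtrless 1$. Monotonicity of the Szeg\H{o} function in the weight (the kernel in \eqref{D1y} is real and positive) then gives both $|D_1(iy)|^2\le D(iy;C_1|x|^{-1/2})^2$ and $|D_1(iy)|^2\le D(iy;C_2\,n^{1/2-\nu}|x|^{-\nu})^2$; since both comparison weights are pure powers, the right-hand sides are explicit via \eqref{eq:SzegoD}. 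Taking the minimum of the two resulting bounds yields \eqref{boundD1} with no integral computation at all. The paper's route is shorter and sidesteps the rescaling calculus entirely; your route, by explicitly tracking how the localized logarithmic deficit of $\tilde W_n$ near the origin is transferred by the Poisson kernel, would adapt more readily if one needed sharp two-sided asymptotics rather than only an upper bound.
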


\begin{proof}
We write first $z=iy$ with $y>0$ in \eqref{eq:D1(z)} and use the parity of the 
function $W_n$ to get the following expression:
\begin{equation}\label{D1y}
 D_1(iy)=\exp\left(\frac{y(y^2+1)^{1/2}}{2\pi}\int_0^1 \frac{\log W_n(x)}{\sqrt{1-x^2}}\frac{dx}{x^2+y^2}\right).
\end{equation}
Using the asymptotic expansions \eqref{asympJ0}, \eqref{asympY0} and \eqref{asympJYinf}, we claim that 
there exist two constants $C_1$ and $C_2$, depending on $\nu$, such that $W_n(x)$ satisfies
\begin{equation*}
W_n(x)\leq C_1 |x|^{-1/2}, \qquad |n \pi x|\geq 1,
\end{equation*}
and
\begin{equation*}
W_n(x)\leq C_2 n^{1/2 - \nu} |x|^{-\nu}, \qquad |n \pi x|\leq 1.
\end{equation*}

Since $\nu\leq 1/2$, both bounds hold uniformly for $n\pi x>0$. 
Since the integrand in \eqref{D1y} is a real function, we can bound $D_1(iy)$ from above by another Szeg\H{o} function:
$$
D_1(iy)^2\leq D(iy;C_1 |\pi x|^{-1/2})^2=C_1 \pi^{-1/2} D(iy;|x|^{-1/2})^2.
$$

This last Szeg\H{o} function is explicit, since for a general exponent $\alpha>-1$ we have
\begin{equation}
\label{eq:SzegoD}
D(z;|x|^{\alpha})=\left(\frac{z}{z+\sqrt{z^2-1}}\right)^{\alpha/2}.
\end{equation}
As a consequence, substituting $z=iy$ with $y \in [-\rho,\rho]$, and $\alpha=-1/2$,
$$
D_1(iy)^2\leq C_1 (ny)^{-1/2}(y+\sqrt{y^2+1})^{1/2}\leq 
C_1\left(\rho +\sqrt{\rho^2+1}\right)^{1/2} (ny)^{-1/2},
$$
and by the same argument with $\alpha=-\nu$,
$$
D_1(iy)^2\leq C_2 n^{1/2-\nu} y^{-\nu}(y+\sqrt{y^2+1})^{\nu}\leq 
C_2 \left(\rho+\sqrt{\rho^2+1}\right)^{\nu} n^{1/2-\nu} y^{-\nu}.
$$
The bound in the lemma follows for $y>0$ from these two estimates, for some constant $C_{\nu}$. 
Finally, from the definition of $D_1$, see \eqref{eq:D1(z)}, we have that if $y<0$, 
then $D_1(iy)=\overline{D_1(-iy)}$, so the  modulus is equal and the bound holds also in this case.
\end{proof}

Now we write together all the estimates computed before to obtain bounds for the functions 
$\eta_1$ and $\eta_2$ defined in \eqref{eta12z}. 
\begin{lemma}
\label{lem:bounds-eta1-eta2}
For $0<\nu\leq 1/2$, there exist constants $C_{\nu}, C'_{\nu} > 0$ such 
that for $n$ large enough and $y \in [0,\rho]$, we have the bounds
\begin{align} 
 |\eta_1(iy)| &\leq \left|j_1(iy) (D_1(iy) D_2(iy))^2 \right| 
\leq C_\nu \, y^\nu \,  e^{-2n\Re \varphi_-(iy)},  
\label{eq:bounds-eta1} \\
 |\eta_2(-iy)| &\leq \left|j_2(-iy) (D_1(-iy) D_2(-iy))^2 \right| 
\label{eq:bounds-eta2} 
	\leq C'_\nu  \, (n^{2\nu}y^\nu+ny^{1-\nu}) \, e^{-2n\Re \varphi_-(-iy)}.
\end{align}
\end{lemma}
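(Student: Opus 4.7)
The first inequality in each chain is immediate from \eqref{eta12z} and property (b) of the cut-off $\chi$, which gives $|\chi(z)|\le 1$. The task therefore reduces to estimating the products $|j_1(iy)|\,|D_1(iy)|^2|D_2(iy)|^2$ and $|j_2(-iy)|\,|D_1(-iy)|^2|D_2(-iy)|^2$ on the relevant portion of the imaginary axis, using the ingredients already assembled.

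The plan is to plug in the three a priori estimates:
\begin{itemize}
\item the bounds \eqref{eq:formulas-j1-j2} for $|j_1(iy)|$ and $|j_2(-iy)|$;
\item the bound \eqref{boundD1} from Lemma \ref{lem:lemmaD1} for $|D_1(\pm iy)|^2$;
\item bounds for $|D_2(\pm iy)|^2$, which follow from the behavior \eqref{D2at0}: $|D_2(iy)|^2\le C\,y^{\nu}$ in the upper half plane and $|D_2(-iy)|^2\le C\,y^{-\nu}$ in the lower half plane. Since $D_2$ is bounded away from $0$ and $\infty$ on the rest of the imaginary axis in $[-\rho,\rho]$, these estimates extend uniformly to the whole relevant range of $y$.
\end{itemize}
Once these are multiplied, it is convenient to introduce the scaled variable $s=n\pi y$, so that the powers of $n$ and $y$ disentangle and the exponential factor $e^{-2n\Re\varphi_-(\pm iy)}$ can simply be carried along.

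For $\eta_1$, the product of the three estimates yields, up to a constant,
\[
|\eta_1(iy)|\;\le\; C\,e^{-2n\Re\varphi_-(iy)}\,y^{\nu}\,
\frac{1+s^{1-2\nu}}{(1+s^{1/2-\nu})^{2}},
\]
after the factor $n^{-\nu}s^{\nu}=\pi^{\nu}y^{\nu}$ coming from $j_1$ cancels the $n^{1/2-\nu}y^{-\nu}$ from $D_1$ (combined with the $y^{\nu}$ from $D_2$) into a clean $y^{\nu}$. The rational expression in $s$ is bounded uniformly in $s\ge 0$ for $0<\nu\le 1/2$: both numerator and denominator behave like $1$ as $s\to 0$ and both like $s^{1-2\nu}$ as $s\to\infty$. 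This gives \eqref{eq:bounds-eta1}.

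For $\eta_2$, the analogous computation produces
\[
|\eta_2(-iy)|\;\le\; C\,e^{-2n\Re\varphi_-(-iy)}\,n^{2\nu}y^{\nu}\,
\frac{1+s^{1-2\nu}}{(1+s^{1/2+\nu})(1+s^{1/2-\nu})},
\]
where this time the $y^{-\nu}$ from $D_2$ combines with $n^{-\nu}s^{3\nu}y^{-\nu}=n^{2\nu}\pi^{3\nu}y^{\nu}$ coming from the remaining factors. I would then split the rational expression as the sum of the two terms $\frac{1}{(1+s^{1/2+\nu})(1+s^{1/2-\nu})}$ and $\frac{s^{1-2\nu}}{(1+s^{1/2+\nu})(1+s^{1/2-\nu})}$; bounding the first by $1$ gives the contribution $n^{2\nu}y^{\nu}$, while for the second, writing $n^{2\nu}y^{\nu}s^{1-2\nu}=\pi^{1-2\nu}n\,y^{1-\nu}$ and bounding the denominator by $1$ produces the contribution $n\,y^{1-\nu}$. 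Adding gives \eqref{eq:bounds-eta2}.

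The only point requiring attention is the uniformity of the rational factors in $s$; everything else is bookkeeping of powers. The restriction $\nu\le 1/2$ enters precisely here, to ensure that the exponents $1/2\pm\nu$ and $1-2\nu$ are non-negative, so that the suprema of the rational expressions are finite.
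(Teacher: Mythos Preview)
Your proof is correct and follows essentially the same route as the paper's own argument: both multiply the bounds \eqref{eq:formulas-j1-j2} for $j_{1,2}$, \eqref{boundD1} for $|D_1|^2$, and the estimates on $|D_2|^2$ coming from \eqref{D2at0}, then simplify the resulting rational expression in $s=n\pi y$ to arrive at \eqref{eq:bounds-eta1}--\eqref{eq:bounds-eta2}. One small expository slip: in your $\eta_2$ computation you write ``$n^{-\nu}s^{3\nu}y^{-\nu}=n^{2\nu}\pi^{3\nu}y^{\nu}$'', but the equality only holds \emph{after} multiplying by the extra $y^{-\nu}$ from $D_2$ (i.e., $n^{-\nu}s^{3\nu}y^{-2\nu}=n^{2\nu}\pi^{3\nu}y^{\nu}$); the final bound is unaffected.
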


\begin{proof} 
 We collect the results on $D_1$ (see formula \eqref{boundD1}), $D_2$ (we use the fact 
that this function does not depend on $n$ and formula  \eqref{D2at0}), $j_1$ and $j_2$ 
(formula \eqref{eq:formulas-j1-j2}). Then for some constant $C_{1,\nu}$ we simplify the bound to
$$
|\eta_1(iy)|\leq  C_{1,\nu} y^{\nu} \frac{1+(ny)^{1-2\nu}}{(1+(ny)^{1/2-\nu})^2}e^{-2n\Re \varphi_-(iy)}
\leq C_{\nu} y^{\nu} e^{-2n\Re \varphi_-(iy)}.
$$

Also,
$$
\begin{aligned}
|\eta_2(-iy)|
& \leq  C_{2,\nu} n^{2\nu} y^{\nu} \frac{1+(ny)^{1-2\nu}}{(1+(ny)^{1/2+\nu})(1+(ny)^{1/2-\nu})}
e^{-2n\Re \varphi_-(-iy)}\\
&\leq C'_{\nu} n^{2\nu} y^{\nu}(1+(ny)^{1-2\nu}) e^{-2n\Re \varphi_-(-iy)},
\end{aligned}
$$
and the result follows. 
\end{proof}

\subsubsection{Estimates for $\|K_1\|$ and $\|K_2\|$ as $n\to\infty$}

In order to estimate the norms of $K_1$ and $K_2$ we need the $\|\cdot\|_2$ norm 
of $\eta_1$ and $\eta_2$, see formula \eqref{eq:norms-K1-K2}. For this we use  the
estimate in Lemma \ref{lem:bounds-eta1-eta2} and the following bound on $\varphi(z)$:

\begin{lemma}
For every $s \in i \mathbb R$ we have 
\begin{equation} \label{estimate:Rephi} 
\begin{aligned}
	\Re \varphi_+(s) = \Re \varphi_-(s) 
		& = -|s| \log |s| + |s| \log(1+ \sqrt{1+s^2}) + \log(|s| + \sqrt{1+s^2}) \\
		& \geq |s| \log \frac{1}{|s|}.  
		\end{aligned}
\end{equation}
\end{lemma}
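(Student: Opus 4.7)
The first equality is essentially for free: on the upward-oriented imaginary axis we already have $\varphi_-(s)=\varphi_+(s)-\pi s$, and $\pi s$ is purely imaginary. Furthermore, $V$ takes purely imaginary values on $i\mathbb R$ (from either side), so the relation $\varphi = g - V/2 - \ell/2$ together with $\ell = -2 - 2\log 2$ from \eqref{ell} gives
\[
\Re \varphi_\pm(s) = \Re g(s) + 1 + \log 2, \qquad s \in i\mathbb R,
\]
reducing the whole computation to $\Re g$ on the imaginary axis.

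My plan is to get a closed form by integrating $\varphi'$. On $(0,1)$, the identity \eqref{phig} combined with \eqref{psidensity2} gives $\varphi'_+(x) = -i\pi\psi(x) = -i\log\frac{1+\sqrt{1-x^2}}{x}$, and this continues analytically, using the complex $\psi$ of \eqref{complexpsi}, to
\[
\varphi'(z) = -i\log\frac{1+\sqrt{1-z^2}}{z}, \qquad \Re z > 0,\; \Im z > 0.
\]
Since $\varphi(1)=0$, I would integrate from $1$ to $iy$ (take $y>0$; the case $y<0$ follows by the reflection $\varphi(\bar z)=\overline{\varphi(z)}$) along a path in the open upper right quadrant. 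The substitution $z=\sin\alpha$ with $\alpha$ moving along the segment from $\pi/2$ to $i\beta$, where $\beta = \log(y+\sqrt{1+y^2})$, keeps $z$ in that quadrant, and along it $\sqrt{1-z^2}=\cos\alpha$, $dz=\cos\alpha\,d\alpha$, so the integrand becomes $-i\log\cot(\alpha/2)\cos\alpha\,d\alpha$. Integration by parts with $u=\log\cot(\alpha/2)$, $dv=\cos\alpha\,d\alpha$ produces the antiderivative $\sin\alpha\log\cot(\alpha/2)+\alpha$. Evaluating between endpoints and using $\sin(i\beta)=iy$, $\cot(i\beta/2)=-i\coth(\beta/2)$, $\coth(\beta/2)=(1+\sqrt{1+y^2})/y$, then taking real parts, yields
\[
\Re\varphi_\pm(iy) = -y\log y + y\log(1+\sqrt{1+y^2}) + \log(y+\sqrt{1+y^2}), \quad y>0,
\]
which, after replacing $y$ by $|y|$, matches the stated formula on $i\mathbb R$ (with $\sqrt{1+s^2}$ interpreted so that it equals $\sqrt{1+|s|^2}$ on the imaginary axis; note $1-s^2 = 1+|s|^2$ there).

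The inequality then falls out of the explicit formula, since both of the last two terms are non-negative: $|s|\log(1+\sqrt{1+|s|^2})\geq |s|\log 2 \geq 0$ and $\log(|s|+\sqrt{1+|s|^2})\geq \log 1 = 0$, so
\[
\Re\varphi_\pm(s) \geq -|s|\log|s| = |s|\log(1/|s|).
\]
The one delicate step I expect to be the main technicality is the careful tracking of the branches of $\sqrt{1-z^2}$ and the logarithm along the chosen path, particularly near the endpoint $z=1$. A robust fallback, if branch bookkeeping becomes unwieldy, is to compute $\Re g(iy)=\int_0^1 \log(x^2+y^2)\psi(x)\,dx$ directly with the substitution $x=\sin\theta$ and an integration by parts (the normalization at $y=0$ then reducing to the classical identity $\int_0^{\pi/2}\log\sin\theta\,d\theta=-\tfrac{\pi}{2}\log 2$), followed by the same inequality argument above.
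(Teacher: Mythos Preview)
Your proposal is correct and follows essentially the same route as the paper: both arguments analytically continue $\varphi'(z)=-i\pi\psi(z)$ into the open first quadrant and integrate to obtain $\varphi$ on the imaginary axis, then take real parts. The only cosmetic differences are that you start the integration at $z=1$ (where $\varphi_+(1)=0$) while the paper starts at $x\in(0,1)$ and sends $x\to0^+$, and that you actually carry out the explicit evaluation via $z=\sin\alpha$ and supply the one-line inequality argument, both of which the paper leaves to the reader; your remark that $\sqrt{1+s^2}$ must be read as $\sqrt{1+|s|^2}$ on $i\mathbb R$ is also well taken.
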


\begin{proof} We consider $\Re \varphi_-(s)$ with $s \in i \mathbb R_+$. The other cases follow by symmetry.
Let $x \in (0,1)$. Then by \eqref{gpgm} and \eqref{phig},
\[
 \varphi_{\pm}(x)=\pm \pi i\int_x^1 \psi(t)dt,
\] 
and so $\varphi'_+(x) = - \pi i \psi(x)$.
By analytic continuation we find
\[ \varphi'(z) = -\pi i \psi(z), \qquad \Re z > 0, \, \Im z > 0. \]
Then
\[ \varphi_-(s) = \varphi_+(x) + \int_x^s \varphi'(z) dz = 
	\varphi_+(x) - \pi i \int_x^s \psi(z) dz. \]
Since $\varphi_+(x)$ is purely imaginary, we obtain by taking the real part
and letting $x \to 0+$,
\[ \Re \varphi_-(s) =  \Im \pi  \int_0^s \psi(z) dz =  \Im \int_0^s \log \left( \frac{1+ (1-z^2)^{1/2}}{z} \right) dz, \]
	where we used \eqref{complexpsi} for $\psi$.
	The integral can be evaluated explicitly and it gives \eqref{estimate:Rephi}.
\end{proof}

Without loss of generality we assume in what follows that $\rho$ is small enough so that $|s|\log\frac{1}{|s|}>0$ for $s\in(-i\rho,i\rho)$.

In order to estimate integrals involving the functions $\varphi_{\pm}(z)$, we use \eqref{estimate:Rephi}, 
together with the following  technical lemma.
\begin{lemma}
\label{lem:bound-integral} For any $\alpha>-1$, there exists a constant $C = C_{\alpha}$ such 
that for $n$ large enough
\begin{equation} \label{bound-integral}
\int_0^{1/e} y^{\alpha}e^{-4n y\log\frac{1}{y}}dy\leq C (n\log n)^{-\alpha-1}. 
\end{equation}
\end{lemma}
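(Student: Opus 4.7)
The integrand has two competing effects: the factor $y^{\alpha}$ may blow up at the origin (when $-1<\alpha<0$) while the exponential $e^{-4ny\log(1/y)}$ provides decay that becomes effective beyond the scale $y \sim 1/(n\log n)$, which is precisely the scale determining the answer. My plan is a three-part split of the interval $(0,1/e)$ at carefully chosen cut-offs $y_n < y_\ast < 1/e$, estimating each piece separately and collecting the dominant contribution.

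Concretely, I would take $y_n = 1/(n \log n)$ and $y_\ast = 1/\sqrt{n}$, and argue as follows. On $(0,y_n)$ I simply drop the exponential, bounding it by $1$, and use $\alpha>-1$ to compute
\[
\int_0^{y_n} y^{\alpha}\,dy = \frac{y_n^{\alpha+1}}{\alpha+1} = \frac{1}{\alpha+1}\,(n\log n)^{-\alpha-1},
\]
which already produces the target order. On $(y_n,y_\ast)$, the point is that $y \le 1/\sqrt{n}$ forces $\log(1/y) \ge \tfrac12 \log n$, hence $4ny\log(1/y) \ge 2ny\log n$. The substitution $u = 2ny\log n$ converts this piece into
\[
(2n\log n)^{-\alpha-1}\int_{2}^{2\sqrt{n}\log n} u^{\alpha} e^{-u}\,du \;\le\; (2n\log n)^{-\alpha-1}\int_{0}^{\infty} u^{\alpha} e^{-u}\,du,
\]
which is again $O((n\log n)^{-\alpha-1})$. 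Finally, on $(y_\ast, 1/e)$ I use $\log(1/y)\ge 1$ to get $4ny\log(1/y)\ge 4ny \ge 4\sqrt{n}$, so this tail contributes at most $e^{-4\sqrt{n}}\int_{y_\ast}^{1/e} y^{\alpha}\,dy$, which is super-exponentially small and therefore absorbed into $O((n\log n)^{-\alpha-1})$. Adding the three bounds yields \eqref{bound-integral} with a concrete constant $C_\alpha$.

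There is no deep obstacle here, but the calibration of the cut-offs is what makes the argument give exactly $(n\log n)^{-\alpha-1}$ rather than the weaker $n^{-\alpha-1}$ one would get from a naive bound $4ny\log(1/y)\ge 4ny$. The choice $y_n=1/(n\log n)$ matches the natural scale, and the intermediate cut-off $y_\ast=1/\sqrt n$ is dictated by the need to retain $\log(1/y)\asymp \log n$ throughout the middle regime, which is precisely what produces the extra $(\log n)^{-\alpha-1}$ factor. The case $-1<\alpha\le 0$ is what forces us to estimate the innermost piece by dropping the exponential rather than by the substitution used in the middle regime.
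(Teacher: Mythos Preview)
Your proof is correct and follows essentially the same strategy as the paper: split at $1/\sqrt{n}$, use $\log(1/y)\ge \tfrac12\log n$ on the inner piece to reduce to a Gamma-type integral via the substitution $u=2ny\log n$, and observe that the outer piece is exponentially small. Your additional cut-off at $y_n=1/(n\log n)$ is harmless but not actually needed---the substitution works on all of $[0,1/\sqrt{n}]$ since $\int_0^\infty u^\alpha e^{-u}\,du=\Gamma(\alpha+1)$ converges for every $\alpha>-1$, so the innermost split is not in fact forced by the case $-1<\alpha\le 0$ as you suggest.
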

\begin{proof} 
We split the integral into two parts and we estimate
\begin{align}
 \int_0^{1/e} y^{\alpha}e^{-4n y\log\frac{1}{y}} \, dy
&=\int_0^{1/\sqrt{n}} y^{\alpha}e^{-4n y\log\frac{1}{y}} \,dy
+\int_{1/\sqrt{n}}^{1/e} y^{\alpha}e^{-4n y\log\frac{1}{y}} \, dy \nonumber \\
&\leq \int_0^{1/\sqrt{n}} y^{\alpha}e^{-4 yn\log n} \, dy
+\int_{1/\sqrt{n}}^{1/e} y^{\alpha}e^{-2 \sqrt{n}\log{n}} \, dy. \label{eq:integrals_phi}
\end{align}
where for the second integral we used that $-y \log \frac{1}{y}$ is decreasing on $[0, \frac{1}{e}]$
and so $-y \log \frac{1}{y} \leq \frac{1}{\sqrt{n}} \log \sqrt{n}$ for $y \in [\frac{1}{\sqrt{n}}, \frac{1}{e}]$.
The first integral of \eqref{eq:integrals_phi} is estimated by extending the integral to $+\infty$
and the result is that it is $\mathcal{O}((n\log n)^{-\alpha-1})$ as $n \to \infty$.  
The second integral in \eqref{eq:integrals_phi} is $\mathcal{O}(e^{-c \sqrt{n}})$ as $n \to \infty$.
This gives the result.
\end{proof}

Combining the estimates in \eqref{eq:bounds-eta1}, \eqref{eq:bounds-eta2}, \eqref{estimate:Rephi} and \eqref{bound-integral} 
we obtain, whenever $2 \varepsilon < \frac{1}{e}$,
\begin{equation}\label{estimates_eta1}
\begin{aligned}
 \int_0^{2\varepsilon} |\eta_1(iy)|^2 dy&=\mathcal{O}(n^{-2\nu-1}(\log n)^{-2\nu-1}),\quad 
 \int_0^{2\varepsilon} \frac{|\eta_1(iy)|^2}{y} dy&=\mathcal{O}(n^{-2\nu}(\log n)^{-2\nu}),
\end{aligned}
 \end{equation}
and
\begin{equation}\label{estimates_eta2}
\begin{aligned}
 \int_0^{2\varepsilon} |\eta_2(-iy)|^2 dy&=\mathcal{O}(n^{2\nu-1}(\log n)^{-2\nu-1}),\quad
 \int_0^{2\varepsilon} \frac{|\eta_2(-iy)|^2}{y} dy&=\mathcal{O} (n^{2\nu}(\log n)^{-2\nu}),
\end{aligned}
\end{equation}
as $n \to \infty$.
To obtain \eqref{estimates_eta2} one has to consider the three different integrals coming 
from square of the factor $n^{2\nu}y^\nu+ny^{1-\nu}$ in \eqref{eq:bounds-eta1}--\eqref{eq:bounds-eta2},
 and retain the largest one. 

Hence, using \eqref{eq:norms-K1-K2} and \eqref{estimates_eta1}-\eqref{estimates_eta2} we have the bounds
\begin{equation}\label{normsK1K2}
\begin{aligned}
 \|K_1\|&\leq \left(\int_0^{2\varepsilon} \frac{|\eta_1(iy)|^2}{y} dy\right)^{1/2}=\mathcal{O}(n^{-\nu}(\log n)^{-\nu}),\\
 \|K_2\|&\leq \left(\int_0^{2\varepsilon} \frac{|\eta_2(-iy)|^2}{y} dy\right)^{1/2}=\mathcal{O}(n^{\nu}(\log n)^{-\nu}).
\end{aligned}
\end{equation}
Thus $K_1$ and $K_2$ are bounded operators between the Hilbert spaces $L^2([0,2i\varepsilon])$
and $L^2([-2i \varepsilon,0])$. In addition from \eqref{normsK1K2}, we get
\begin{equation}\label{estimate_K1K2}
 \|K_1 K_2\| \leq \| K_1 \| \, \| K_2 \| = \mathcal{O}((\log n)^{-2\nu}), \qquad n \to \infty,
\end{equation}
and similarly
\begin{equation} \label{estimate_K2K1}
	\|K_2 K_1\|=\mathcal{O}((\log n)^{-2\nu}), \qquad n\to\infty.
\end{equation}

\subsubsection{Proof of Lemma \ref{lem:Phat}}

\begin{proof}
It follows from \eqref{estimate_K1K2} and \eqref{estimate_K2K1} that the
operators $I-K_2K_1$ and $I-K_1K_2$ are invertible for $n$ large enough, and then
we can solve the equations \eqref{eq:formulas-f1-g1} and \eqref{eq:formulas-f2-g2}.
Thus we define the entries of the matrix $\widehat P$ as follows:
\begin{align}\label{entriesP:1}
\widehat P_{11} & = (I-K_2K_1)^{-1} 1, && \widehat P_{12}=K_1\widehat P_{11} \\
\widehat P_{21} & = K_2 \widehat P_{22},	&& \widehat P_{22}  =(I-K_1K_2)^{-1} 1.
\label{entriesP:2}
\end{align}
In \eqref{entriesP:1} and \eqref{entriesP:2} we use $1$ to denote the identically-one function 
in $L^2([0, 2 i \varepsilon])$ and $L^2([-2i \varepsilon,0])$, respectively.
Then \eqref{eq:formulas-f1-g1} and \eqref{eq:formulas-f2-g2} hold true, which means
that the equations in \eqref{hatP11P12} hold. This then also means that the jump condition 
\eqref{jumpcondition:Phat}  in the RH problem \ref{RHforPhat} is satisfied.

The equations  \eqref{hatP11P12} allow us to give estimates on $\widehat{P}(z)$. First of all
we obtain from \eqref{estimate_K1K2}-\eqref{estimate_K2K1}, \eqref{entriesP:1}, and \eqref{entriesP:2} that
\begin{equation} \label{normestimates1} 
	\| \widehat P_{11}\|_{L^2([0,2i\varepsilon])} = \mathcal{O}(1), \qquad  
		\| \widehat P_{22}\|_{L^2([-2i\varepsilon,0])} = \mathcal{O}(1), 
		\end{equation}
and then by \eqref{normsK1K2}
\begin{align} \label{normestimates2} 
	\| \widehat P_{12} \|_{L^2([-2i \varepsilon,0])} & \leq
	\| K_1 \| \, \| \widehat P_{11}\|_{L^2([0,2i\varepsilon])} 
		= \mathcal{O}(n^{-\nu}(\log n)^{-\nu}),\\
	\| \widehat P_{21} \|_{L^2([0, 2i \varepsilon])} & \leq
	\| K_2 \| \, \| \widehat P_{22}\|_{L^2([-2i\varepsilon,0])} 
		=\mathcal{O}(n^{\nu}(\log n)^{-\nu}).
	\end{align}

For pointwise estimates we use the distances
\[ d_+(z)=\dist (z,[0,2i\varepsilon]), \qquad d_-(z) =\dist(z,[-2i\varepsilon,0]). \]
Then by the first equation in  \eqref{hatP11P12}, we get for $z \in \mathbb C \setminus [-2i\varepsilon,0]$,
\begin{align*}
|\widehat{P}_{11}(z) - 1| & 
	\leq \frac{1}{2\pi d_-(z)} \left| \int_0^{2i \varepsilon} \eta_2(s) \widehat{P}_{12}(s) ds \right| 
		\leq \frac{1}{2\pi d_-(z)}  \| \eta_2\|_2 \, \| \widehat{P}_{12} \|_2 
		\end{align*}
where we used the Cauchy-Schwarz inequality, and $\| \cdot \|_2$ is the $L^2$ norm on $[-2i\varepsilon,0]$.
Thus by \eqref{estimates_eta2} and \eqref{normestimates2},
\begin{equation} \label{hatP11bound}
	 |\widehat{P}_{11}(z) - 1| = \frac{1}{d_-(z)} \, \mathcal{O} \left( n^{-1/2} (\log n)^{-2\nu-1/2}\right), 
\end{equation}
as $n \to \infty$, uniformly for $z \in \mathbb C \setminus [-2i\varepsilon,0]$. 
Using similar arguments, we obtain
\begin{align} \label{hatP12bound}
|\widehat{P}_{12}(z)| & = \frac{1}{d_+(z)} \mathcal{O} \left( n^{-\nu-1/2}(\log n)^{-\nu-1/2} \right),\\
\label{hatP21bound}
|\widehat{P}_{21}(z) | & =\frac{1}{d_-(z)} \mathcal{O} \left( n^{\nu-1/2} (\log n)^{-\nu-1/2} \right), \\
\label{hatP22bound}
|\widehat{P}_{22}(z) - 1| & = \frac{1}{d_+(z)} \mathcal{O} \left( n^{-1/2}  (\log n)^{-2\nu-1/2}\right),
 \end{align}
as $n \to \infty$, and the $\mathcal{O}$ terms are uniform in $z$. Observe that all $\mathcal{O}$ terms tend to $0$ as $n \to \infty$,
since $\nu \leq 1/2$.

It follows from \eqref{hatP11bound}--\eqref{hatP22bound} that $\widehat{P}(z) = I + \mathcal{O}(z^{-1})$ as $z \to \infty$
and therefore $\widehat{P}$ satisfies the RH problem \ref{RHforPhat}. 
For $|z| = 3 \varepsilon$ we have $d_{\pm}(z) \geq \varepsilon$. From \eqref{hatP11bound}--\eqref{hatP22bound} 
we then immediately find that the estimates in Lemma \ref{lem:Phat} hold, and the lemma is proved. 
\end{proof}

This also completes the proof of Proposition \ref{propo8}.

\subsection{Final transformation} \label{finaltrans}
Having $P$ as in Proposition \ref{propo8} we define the final transformation $Q \mapsto R$ as
\begin{equation}\label{R}
R(z)=\begin{cases}
Q(z), &  \textrm{for } |z| > 3 \varepsilon, \\
Q(z)P(z)^{-1}, & \textrm{for } |z| < 3 \varepsilon.
\end{cases}
\end{equation}
Recall that $Q$ is the solution of the RH problem \ref{RHforQ}.

Then $R$ has jumps on a contour $\Sigma_R$ that consists of $\Sigma_Q \setminus (-i \varepsilon, i \varepsilon)$
together with the circle of radius $3 \varepsilon$ around $0$, see Figure \ref{figR}.
Note that the jumps of $P$ and $Q$ coincide on $(-i\varepsilon, i \varepsilon)$, so that $R$ has
an analytic continuation across that interval.

\begin{figure}
\centerline{\includegraphics{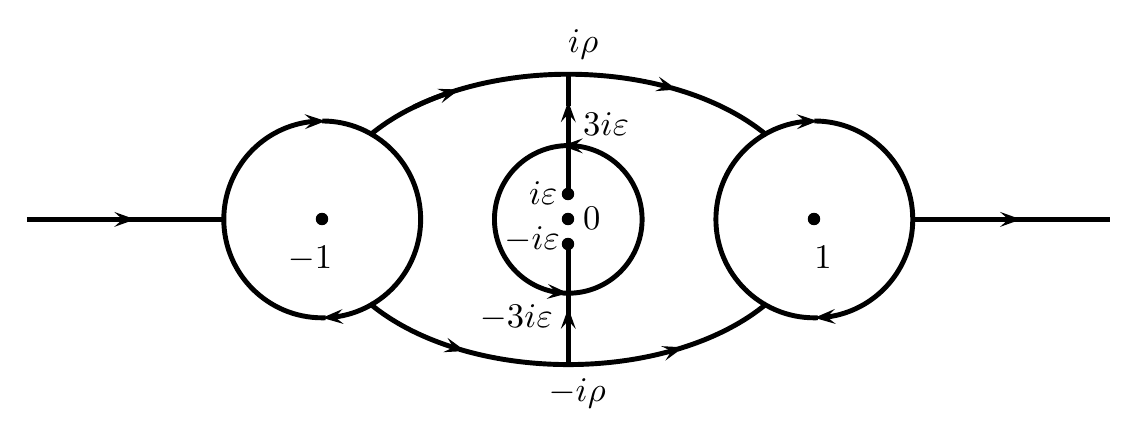}}
\caption{Contour $\Sigma_R$}
\label{figR}
\end{figure}

From RH problem \ref{RHforQ} and the definition \eqref{R} it follows that $R$ satisfies
the following RH problem.

\begin{rhp}\label{RHforR}
\begin{itemize}
\item[1)] $R : \mathbb C \setminus \Sigma_R \to \mathbb C^{2\times 2}$ is analytic.
\item[2)] $R$ satisfies the jump condition
$R_+ = R_- J_R$ on $\Sigma_R$ where
\begin{align} \label{jump:R}
J_R(z) = 
\begin{cases}
J_Q(z) & \text{ for } z \in \Sigma_R \text{ with } |z| > 3 \varepsilon, \\
P(z)^{-1} & \text{ for } |z| = 3 \varepsilon, \\
P_-(z) J_Q(z) P^{-1}_+(z) & \text{ for } z \in (-3i \varepsilon, - i \varepsilon) \cup
(i \varepsilon, 3i \varepsilon).
\end{cases}
\end{align}
\item[3)] As $z\rightarrow\infty$,
\begin{equation*}
R(z)=I+\mathcal{O}(1/z).
\end{equation*}
\end{itemize}
\end{rhp}

In order to solve this RH problem asymptotically for large $n$, we need to show that the jump matrices for $R(z)$ are 
close to the identity matrix uniformly for $z\in\Sigma_R$, see Figure \ref{figR}.

\begin{lemma}
The jump matrix $J_R$ in the RH problem for $R$ satisfies for some constant $c > 0$,
\begin{equation} \label{JRasymp}
	J_R(z) = \begin{cases} I + \mathcal{O}(\epsilon_n), &  \text{ for } |z| = 3 \varepsilon, \\
		I + \mathcal{O}(1/n), & \text{ for } |z\pm 1| = \delta, \\
		I + \mathcal{O}(e^{-cn}), & \text{ elsewhere on $\Sigma_R$},
		\end{cases}
		\end{equation}
as $n \to \infty$, where the $\mathcal{O}$ terms are uniform.
\end{lemma}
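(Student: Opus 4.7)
The plan is to verify each of the three estimates in \eqref{JRasymp} separately according to where $z$ sits on $\Sigma_R$ and which branch of the definition \eqref{jump:R} applies.

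The first two cases are essentially automatic. On the circle $|z|=3\varepsilon$ we have $J_R(z)=P(z)^{-1}$, and condition 3) of the RH problem \ref{RHforP}, established in Proposition \ref{propo8}, gives $P(z)=I+\mathcal{O}(\epsilon_n)$ there, so $J_R=I+\mathcal{O}(\epsilon_n)$. On the circles $|z\pm 1|=\delta$ we have $|z|>3\varepsilon$, so $J_R=J_Q=NP_{\Ai}^{-1}$, and the Airy matching \eqref{matching} immediately yields $J_R=I+\mathcal{O}(n^{-1})$.

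For the remaining parts of $\Sigma_R$ we need $J_R=I+\mathcal{O}(e^{-cn})$. I would split this into three sub-regions. On the lens lips $\Sigma_1,\ldots,\Sigma_4$ and on $(-\infty,-1)\cup(1,\infty)$ outside the endpoint disks we have $J_R=J_Q=NJ_SN^{-1}$; the bound \eqref{JSasymp} gives $J_S=I+\mathcal{O}(e^{-cn})$, while $N$ is uniformly bounded in this region via \eqref{solutionNgeneral}--\eqref{eq:N0} and Lemma \ref{lem:D1nlimit}, so the conjugation is harmless. On the imaginary axis parts $(3i\varepsilon,i\rho)\cup(-i\rho,-3i\varepsilon)$ we have $J_R=J_Q$ given by \eqref{jumpQ1}, whose only non-trivial entry is $\eta_{1,2}(z)$; by Lemma \ref{lem:bounds-eta1-eta2} this is bounded by a polynomial in $n$ times $e^{-2n\Re\varphi_\pm(z)}$, and for $|z|\geq 3\varepsilon$ the lower bound \eqref{estimate:Rephi} makes $\Re\varphi_\pm(z)$ a strictly positive constant, so the polynomial prefactor is absorbed and $J_R-I=\mathcal{O}(e^{-cn})$.

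The remaining case, the segments $(\pm i\varepsilon,\pm 3i\varepsilon)$, is the one that requires care. Here $J_R=P_-J_QP_+^{-1}=P_-(J_QJ_P^{-1})P_-^{-1}$ after using $P_+=P_-J_P$. Because $J_P$ and $J_Q$ differ only by the insertion of the cutoff $\chi$ into the middle lower-triangular factor of the conjugation by $D_\infty^{\sigma_3}N_0$, a direct computation gives
\begin{equation*}
J_QJ_P^{-1}=I+D_\infty^{\sigma_3}N_0(z)\begin{pmatrix}0&0\\ \eta_{1,2}(z)(1-\chi(z))&0\end{pmatrix}N_0(z)^{-1}D_\infty^{-\sigma_3}.
\end{equation*}
Since $|z|\geq\varepsilon$, Lemma \ref{lem:bounds-eta1-eta2} together with \eqref{estimate:Rephi} again yields $\eta_{1,2}(z)=\mathcal{O}(e^{-cn})$, and the issue reduces to uniform boundedness of the conjugating factors. $N_0$ and $D_\infty^{\sigma_3}$ are bounded near the origin by Section \ref{Sec_global} and Lemma \ref{lem:D1nlimit}, while $P_\pm$ is controlled via \eqref{Phat} and Lemma \ref{lem:Phat}: in the annulus $\varepsilon\leq|z|\leq 3\varepsilon$ the distance to the opposing cut is at least $\varepsilon$, and boundary values of $\widehat P$ on its own cut differ from interior values by an exponentially small amount because the jump $J_{\widehat P}$ itself is $I+\mathcal{O}(e^{-cn})$ there. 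The main technical obstacle is precisely this uniform boundedness of $P$ in the transition region of $\chi$; once that is in hand, all three estimates in \eqref{JRasymp} follow by composing exponential bounds already established in the paper.
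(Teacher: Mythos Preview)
Your treatment of the circles $|z|=3\varepsilon$, $|z\pm1|=\delta$, the lens lips, the real semi-axes, and the outer segments $(3i\varepsilon,i\rho)\cup(-i\rho,-3i\varepsilon)$ is correct and coincides with the paper's argument (modulo a harmless notational slip: in $J_Q$ the relevant entry is $j_{1,2}(D_1D_2)^2$, not $\eta_{1,2}$, since $\eta$ already contains the cutoff $\chi$).

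The difference with the paper is in the transition segments $(\pm i\varepsilon,\pm 3i\varepsilon)$, and here your argument has a gap. You write $J_R=P_-(J_QJ_P^{-1})P_-^{-1}$ and correctly identify $J_QJ_P^{-1}=I+\mathcal{O}(e^{-cn})$; the remaining task is uniform boundedness of $P_-$ and $P_-^{-1}$, i.e.\ of all four entries of $\widehat P_-$. Lemma~\ref{lem:Phat} bounds $\widehat P_{11},\widehat P_{21}$ on $\mathbb C\setminus[-2i\varepsilon,0]$ (so they are fine on $(i\varepsilon,3i\varepsilon)$), but its bounds on $\widehat P_{12},\widehat P_{22}$ carry a factor $1/d_+(z)$ which blows up exactly on $[0,2i\varepsilon]$. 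Your proposed fix, ``the jump $J_{\widehat P}$ is $I+\mathcal{O}(e^{-cn})$ so boundary values differ from interior values by an exponentially small amount'', is circular: knowing $\widehat P_{12,+}-\widehat P_{12,-}=\eta_1\widehat P_{11}=\mathcal{O}(e^{-cn})$ tells you the two boundary values are close to \emph{each other}, not that either one is bounded; there are no ``interior values'' on the cut to compare with.

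The paper avoids this problem by a structural trick. After inserting $\widehat P_-=\widehat P_+J_{\widehat P}^{-1}$, the expression collapses to \eqref{JRstep2}--\eqref{JRstep3}, and the matrix $\Lambda(z)$ involves only $\widehat P_+\begin{pmatrix}0&1\\0&0\end{pmatrix}\widehat P_+^{-1}$, whose entries are built from $\widehat P_{11}$ and $\widehat P_{21}$ alone. These two entries have their cut on $[-2i\varepsilon,0]$, hence are analytic on $(i\varepsilon,3i\varepsilon)$, and the bounds \eqref{hatP11bound}, \eqref{hatP21bound} apply directly with $d_-(z)\geq\varepsilon$. This nilpotent-structure observation is what replaces the boundedness of $P_-$ that you were unable to justify; it is the missing idea in your argument.
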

\begin{proof} 
For $z\in\Sigma_R$ with $|z|>3\varepsilon$, we have $J_R(z)=J_Q(z)$. On the boundary of the 
disks around the endpoints we have $J_Q(z)=I+\mathcal{O}(n^{-1})$, see \eqref{JQasymp1} 
and on the rest of $\Sigma_R$ except $(-i\rho,i\rho)$ we have $J_Q(z)=I+\mathcal{O}(e^{-cn})$ 
for some $c>0$, see \eqref{JQasymp2}. 

On the circle $|z|=3\varepsilon$, the jump is $J_R(z)=P(z)^{-1}$. We use \eqref{Phat} 
and the fact that $\widehat{P}(z)=I+\mathcal{O}(\epsilon_n)$, uniformly 
for $|z|=3\varepsilon$, to find that  
\[ J_R(z) = P(z)^{-1} = I+\mathcal{O}(\epsilon_n), \]
as given in \eqref{JRasymp}. 

For $z \in (3i\varepsilon,i\rho)$ we get from \eqref{jump:R} and \eqref{jumpQ1}
\begin{equation*}
 J_R(z)=J_Q(z)=D_{\infty}^{\sigma_3}N_0(z)\begin{pmatrix} 1 & 0\\ j_1(z)(D_1(z)D_2(z))^2 & 1\end{pmatrix}
 N_0^{-1}(z)D_{\infty}^{-\sigma_3}.
\end{equation*}
From \eqref{eq:bounds-eta1} and \eqref{estimate:Rephi}, we obtain for $y \in [0, \rho]$,
\begin{equation}\label{boundj1D1D2}
|j_1(iy)(D_1(iy)D_2(iy))^2|\leq C_{\nu} y^{\nu}e^{-2n y}, \qquad C_{\nu} > 0,
\end{equation} 
We also use \eqref{Dinftylimit}  and then \eqref{JRasymp} for $z \in (3i \varepsilon, i \rho)$
follows. The case $z \in (-i \rho, -3i \varepsilon)$ can be handled in a similar way.

What is left are the intervals $(i\varepsilon,3i\varepsilon)$ and $(-3i \varepsilon, -i \varepsilon)$.
For $z \in (i\varepsilon, 3i \varepsilon)$ we  find from \eqref{jump:R} and \eqref{Phat} 
that 
\begin{multline*}
 J_R(z)=D_{\infty}^{\sigma_3} N_0(z)
 \begin{pmatrix} 0 & -1 \\ 1 & 0\end{pmatrix}
 \widehat{P}_-(z) 
 \begin{pmatrix} 1 & -j_1(z)(D_1(z)D_2(z))^2\\ 0 & 1\end{pmatrix}\\
 \times \widehat{P}^{-1}_+(z)
 \begin{pmatrix} 0 & 1 \\ -1 & 0\end{pmatrix}
 N_0(z)^{-1}D_{\infty}^{-\sigma_3}.
\end{multline*}
Using \eqref{jump:Phat}-\eqref{jumpcondition:Phat} we rewrite this as
\begin{multline} \label{JRstep2}
J_R(z)  =I - j_1(z)(D_1(z)D_2(z))^2(1-\chi(z))
	D_{\infty}^{\sigma_3} N_0(z)
 \begin{pmatrix} 0 & -1 \\ 1 & 0\end{pmatrix}
 \widehat{P}_+(z) 
 \begin{pmatrix} 0 & 1 \\ 0 & 0 \end{pmatrix} \\
 \times \widehat{P}^{-1}_+(z)
 \begin{pmatrix} 0 & 1 \\ -1 & 0\end{pmatrix}
 N_0(z)^{-1}D_{\infty}^{-\sigma_3}.
\end{multline}
Here we note that $\det \widehat{P}(z) = 1$, which follows by standard arguments
from the RH problem \ref{RHforPhat}, and therefore 
$\widehat{P}^{-1}_+ = \begin{pmatrix} \widehat{P}_{22} & - \widehat{P}_{12} \\
	-\widehat{P}_{21} & \widehat{P}_{11} \end{pmatrix}_+$.
	Then a little calculation shows that \eqref{JRstep2} reduces to
\begin{align} \label{JRstep3}
J_R(z)  & =I + j_1(z)(D_1(z)D_2(z))^2(1-\chi(z)) \Lambda(z), \qquad z \in (i\varepsilon, 3 i \varepsilon),
\end{align}
where 
\begin{equation*}
\Lambda(z)=D_{\infty}^{\sigma_3}N_0(z)
\begin{pmatrix}
 -\widehat{P}_{11}(z)\widehat{P}_{21}(z)  & -\widehat{P}_{21}(z)^2 \\
 \widehat{P}_{11}(z)^2  & \widehat{P}_{11}(z)\widehat{P}_{21}(z) 
\end{pmatrix}
N_0^{-1}(z)D_{\infty}^{-\sigma_3}.
\end{equation*}
The functions $\widehat{P}_{11}$ and $\widehat{P}_{21}$ are analytic on $(i\varepsilon, 3 i \varepsilon)$
and so we do not have to take the $+$-boundary value. 

Then it follows from \eqref{Dinftylimit} and the estimates in \eqref{hatP11bound} and \eqref{hatP21bound} 
that all entries in $\Lambda$ are uniformly bounded as $n \to \infty$.
Then by \eqref{eq:bounds-eta1} and \eqref{JRstep3} we find \eqref{JRasymp} for $z \in (i \varepsilon, 3 i \varepsilon)$.
A similar argument shows that $J_R(z)$ is  exponentially close to the identity 
matrix for $z \in (-3i \varepsilon, -i \varepsilon)$ as well, and the lemma follows.
\end{proof}

As a consequence of \eqref{JRasymp}, the biggest estimates for $J_R - I$ are
on the circle $|z|=3\varepsilon$.  For $0<\nu\leq 1/2$, the jump matrix satisfies
(recall $\epsilon_n$ is given by \eqref{epsilonn})
\begin{equation}
 J_R(z) = I + \mathcal{O}(\epsilon_n), \qquad n \to \infty,
\end{equation}
uniformly for $z\in \Sigma_R$ where $\Sigma_R$ is the union of contours depicted in Figure \ref{figR}. 
Note that $J_R(z) \to I$ as $n \to \infty$, but the rate of convergence is remarkably slow.

Following standard arguments, we now find that for $n$ sufficiently large,
the RH problem \ref{RHforR} for $R$ is solvable, and 
\begin{equation}
\label{eq:asymptotics-R}
R(z) = I + \mathcal{O}(\epsilon_n), \qquad n\to\infty,
\end{equation}
uniformly for $z \in \mathbb C \setminus \Sigma_R$. 
The convergence rate in \eqref{eq:asymptotics-R} may not be optimal, since 
some of the bounds in the analysis may not be as sharp as possible. Note that for $\nu = 1/2$
we only have $R(z) = I + \mathcal{O}(\frac{1}{\log n})$, which is a very slow convergence.  

Since all of the transformations $X \mapsto U \mapsto T \mapsto S \mapsto Q \mapsto R$ are invertible,
we then also find that the RH problem for $X$ is solvable for $n$ large enough. In particular
we find that the polynomial $P_n = X_{11}$  exists for $n$ large enough.

\section{Proofs of the Theorems}\label{proofs}

\subsection{Proof of Theorem \ref{Th1}}  \label{section41} 

\begin{proof}
Following  the transformations of the Deift--Zhou steepest descent analysis and using formula
\eqref{eq:asymptotics-R}, we  obtain asymptotic information about $\widetilde{P}_n(z) = U_{11}(z)$ 
in the complex plane, see \eqref{UnX} and \eqref{tildePn}.  
Consider the region in Figure \ref{figR} which is outside the lens 
and outside of the disks around $z=\pm1$. In this case $U_{11}(z)=T_{11}(z)e^{ng(z)}$, and by \eqref{T},
\eqref{S}, \eqref{Q}, \eqref{R},
$$
T(z)=S(z)=Q(z)N(z)=R(z)N(z),
$$ which means that
\begin{equation}\label{proof:outer}
\begin{aligned}
\widetilde{P}_n(z) e^{-ng(z)} & = T_{11}(z) = R_{11}(z)N_{11}(z)+R_{12}(z)N_{21}(z)\\
&= N_{11}(z)(1+\mathcal{O}(\epsilon_n)) + N_{21}(z) \mathcal{O}(\epsilon_n),
\end{aligned}
\end{equation}
using \eqref{eq:asymptotics-R}. Here $\epsilon_n$ is given again by \eqref{epsilonn}.
We observe that $N_{11}=D_{\infty}N_{0,11} (D_1D_2)^{-1}$, from \eqref{solutionNgeneral}, and using 
\eqref{D1nlimit}, \eqref{Dinftylimit}, \eqref{eq:D2(z)} and \eqref{eq:N0} we get
\begin{equation}\label{N11asympexplicit}
N_{11}(z)=\left(\frac{z(z+(z^2-1)^{1/2})}{2(z^2-1)}\right)^{1/4}\left(\frac{(z^2-1)^{1/2}-i}{(z^2-1)^{1/2}+i}\right)^{-\nu/4}
\left(1+\mathcal{O} \left(\frac{\log n}{n}\right)\right), 
\end{equation}
as $n\to\infty$. Similarly, we also see that $N_{21}(z) = \mathcal{O}(1)$ as $n \to \infty$
and \eqref{asymp:Pn:outer} follows.

Since the lens can
be taken arbitrarily close to the interval $[-1,1]$ and the disks can be taken
arbitrarily small, the asymptotics \eqref{asymp:Pn:outer} is valid uniformly on
any compact subset of $\mathbb C \setminus [-1,1]$. This proves Theorem \ref{Th1}.
\end{proof}

\subsection{Proof of Theorem \ref{Th3}}   \label{section42} 

\begin{proof}
Inside the lens, but away from the endpoints and the origin, we use the relation 
\eqref{S} between the functions $T(z)$ and $S(z)$. Let $z$ be in the lens with $\Re z > 0$. Then we have
\begin{equation*}
T_{11}(z)=S_{11}(z) \pm S_{12}(z)\frac{e^{\frac{\nu \pi i}{2}-2n\varphi(z)}}{W_n(z)},
\end{equation*}
for $\pm \Im z > 0$, and therefore
\begin{equation*}
 \widetilde{P}_n(z) = e^{ng(z)} T_{11}(z)  =e^{ng(z)} \left[ S_{11}(z) \pm S_{12}(z)\frac{e^{\frac{\nu \pi i}{2}-2n\varphi(z)}}{W_n(z)} \right].
\end{equation*}
Since $S(z)=Q(z)N(z)$ away from the endpoints, and $Q(z)=R(z)$ away from the origin (if $|z|>3\varepsilon$),
see \eqref{Q} and \eqref{R}, we obtain
\begin{equation}\label{innerP}
\widetilde{P}_n(z) = e^{n g(z)} 
	\left[ N_{11}(z) \pm N_{12}(z)\frac{e^{\frac{\nu \pi i}{2}-2n\varphi(z)}}{W_n(z)}+\mathcal{O}(\epsilon_n) \right].
\end{equation}
for $\Re z \geq 0$, and $\pm \Im z > 0$.

We are going to simplify the expression \eqref{innerP} and we do it for $\Re z > 0$, $\Im z > 0$.
First we use \eqref{phifunction}, \eqref{Vfunction}, and \eqref{ell} in \eqref{innerP} to get 
\begin{equation}\label{innerP2}
\widetilde{P}_n(z) = \frac{e^{\frac{n \pi z}{2}}}{(2e)^n W_n(z)^{1/2}}
	\left[ N_{11}(z) W_n(z)^{1/2} e^{n \varphi(z)} + \frac{N_{12}(z)}{W_n(z)^{1/2}} e^{\frac{\nu \pi i}{2}-n\varphi(z)} 
		+\mathcal{O}(\epsilon_n) \right].
\end{equation}
From \eqref{solutionNgeneral} we have $N_{11} = D_{\infty} N_{0,11} (D_1 D_2)^{-1}$,
$N_{12} = D_{\infty} N_{0,12} D_1 D_2$ and so
\begin{multline}\label{innerP3}
\widetilde{P}_n(z) = \frac{D_{\infty} e^{\frac{n \pi z}{2}+\frac{\nu \pi i}{4}}}{(2e)^n W_n(z)^{1/2}} 
	\left[  \frac{N_{0,11}(z) W_n(z)^{1/2}}{D_1(z) D_2(z)} e^{-\frac{\nu \pi i}{4} + n \varphi(z)} \right. \\
	\left.  + 
		\frac{N_{0,12}(z) D_1(z) D_2(z)}{W_n(z)^{1/2}} e^{\frac{\nu \pi i}{4}-n\varphi(z)} 
		+\mathcal{O}(\epsilon_n) \right].
\end{multline}

Next we use \eqref{eq:N0} to write
\begin{equation*}
 N_{0,11}(z)= e^{-\frac{\pi i }{4}} \frac{f(z)^{1/2}}{\sqrt{2} (1-z^2)^{1/4}}, \qquad
 N_{0,12}(z)= e^{\frac{\pi i}{4}} \frac{f(z)^{-1/2}}{\sqrt{2} (1-z^2)^{1/4}}, 
\end{equation*}
where $(1-z^2)^{1/4}$ denotes the branch that is real and positive for $-1 < z < 1$ and $f(z)$ is given
 by \eqref{fz}. Thus
\begin{multline} \label{innerP4}
\widetilde{P}_n(z) = \frac{D_{\infty} 
e^{\frac{n \pi z}{2}+\frac{\nu \pi i}{4}}}{\sqrt{2} (2e)^n (1-z^2)^{1/4} W_n(z)^{1/2}} \\
\times
	\left[\left(  \frac{f(z)^{1/2} W_n(z)^{1/2}}{D_1(z) D_2(z)} e^{n \varphi(z)-\frac{\nu\pi i}{4}-
\frac{\pi i}{4}}  
		+  \frac{D_1(z) D_2(z)}{ f(z)^{1/2} W_n(z)^{1/2}} e^{-n\varphi(z)+\frac{\nu\pi i}{4}+
\frac{\pi i}{4}}\right)
 +\mathcal{O} \left(\epsilon_n \right)\right]. 
\end{multline}

The two terms in parenthesis are inverse of each other. We write all contributing factors 
in exponential form.  We have by
\eqref{phig}, \eqref{gpgm} and \eqref{D2andpsi}
\begin{align} \label{term1}	
	e^{n \varphi(z)} & = \exp(\pi i n \int_z^1 \psi(s) ds) \\
	D_2(z) e^{\frac{\nu \pi i}{4}} & = \exp \left(- \frac{\nu \pi}{2} \psi(z)\right)  \label{term2}
\end{align}
for $\Re z > 0$, $\Im z > 0$, and we note that by \eqref{Wnestimate} and \eqref{D1nlimit}
\begin{align}  \label{term3}
	\frac{W_n(z)^{1/2}}{D_1(z)} = f(z)^{-1/4} \left( 1 + \mathcal{O}\left( \frac{\log n}{n}\right) \right) \end{align}
as $n \to \infty$. Finally, we write
\begin{align} \label{term4}
	f(z)^{1/2}  = e^{\frac{i}{2} \arccos z}, \qquad \Im z > 0
\end{align}
and inserting \eqref{term1}--\eqref{term4} into \eqref{innerP4} we find
\eqref{asymp:Pn:inner}, where we also use  \eqref{Wnestimate}, \eqref{Dinftylimit} to simplify the first
factor.

A similar calculation leads to the same formula  \eqref{asymp:Pn:inner}
for $z \in E$ with $\Re z > 0$ and $\Im z < 0$.
\end{proof}

\subsection{Proof of Theorem \ref{Th0}}  \label{section43} 

\begin{proof}
It follows from \eqref{proof:outer} and \eqref{N11asympexplicit} 
that the leading factor in the outer asymptotics of 
$P_n(in\pi z)$ does not vanish for $z \in \mathbb C \setminus [-1,1]$. 

Let $\widetilde{P}_n(z) = (in\pi)^{-n} P_n(in\pi z)$ be the monic polynomial. Then
we find from \eqref{gfunction} that
\begin{equation} \label{eq:logPnconvergence} 
	\lim_{n\to\infty}\frac{1}{n} \log|  \widetilde{P}_n(z)| =  \Re g(z) = \int_{-1}^1 \log |z-x| \psi(x) dx,  
	\end{equation}
uniformly for $z$ in compact subsets of $\mathbb C \setminus [-1,1]$.
This implies that for any given compact subset $K \subset \overline{\mathbb{C}}\setminus[-1,1]$, 
the polynomial $\widetilde{P}_n$ does not have any zeros in $K$ for $n$ large enough.
In other words, all zeros of $\widetilde{P}_n$ tend to the interval $[-1,1]$ as $n \to \infty$.

In addition we find from \eqref{eq:logPnconvergence} that the zeros of $\widetilde{P}_n$
have $\psi(x)$ as limiting density. This follows from standard arguments in potential theory, 
see e.g.\ \cite{ST}.  
This proves Theorem \ref{Th0}.
\end{proof}

\subsection{Proof of Theorem \ref{Th2}}  \label{section44} 

Let $E$ be the neighborhood of $(-1,1)$ as in Theorem \ref{Th3}. 
Theorem \ref{Th2} will follow from  the asymptotic approximation \eqref{innerP} 
that is valid uniformly for $z$ in 
\[ E_{\delta} = E \setminus \left(D(-1, \delta) \cup D(0,\delta) \cup D(1, \delta)\right) \]
with $\Re z \geq 0$.

\begin{lemma} \label{lem:allzeros}
There is a constant $C > 0$ such that for large $n$ all zeros in $E_{\delta}$ satisfy
\begin{equation} \label{Allzeros} 
	\left| \Re \frac{\nu \pi}{2} \psi(z) - \Im \theta_n(z) \right| < C \epsilon_n. 
	\end{equation}
\end{lemma}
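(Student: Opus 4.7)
The strategy is to read off the zeros of $\widetilde{P}_n$ directly from the strong asymptotic formula \eqref{asymp:Pn:inner}, and show that they can only occur where the two oscillatory terms have comparable modulus. By the symmetry $\widetilde{P}_n(-\bar{z}) = \overline{\widetilde{P}_n(z)}$ (which forces the zero set to be symmetric under $z \mapsto -\bar z$) and the analogous formula for $\Re z < 0$, it suffices to treat $z \in E_\delta$ with $\Re z \geq 0$.

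First I would note that in \eqref{asymp:Pn:inner} the prefactor
\[
\frac{z^{1/4} e^{\frac{\nu\pi i}{4}} e^{n\pi z/2}}{2^{1/4}(2e)^n (1-z^2)^{1/4}}
\]
is nonzero throughout $E_\delta$, so a zero of $\widetilde{P}_n$ there forces the bracketed quantity to vanish. Set
\[
U(z) = \exp\!\Big(\tfrac{\nu\pi}{2}\psi(z) + i\theta_n(z)\Big), \qquad V(z) = U(z)^{-1},
\]
so the bracket reads
\[
U(z)\bigl(1+\mathcal{O}(\log n /n)\bigr) + V(z)\bigl(1+\mathcal{O}(\log n/n)\bigr) + \mathcal{O}(\epsilon_n).
\]
Writing $A(z) := \Re\frac{\nu\pi}{2}\psi(z) - \Im\theta_n(z)$, one has $|U(z)| = e^{A(z)}$ and $|V(z)| = e^{-A(z)}$, so the claim \eqref{Allzeros} is exactly that $|A(z)| \leq C\epsilon_n$ at every zero.

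The main step is a contrapositive lower bound on the bracket. Assuming without loss of generality that $A(z) \geq 0$ (otherwise swap the roles of $U$ and $V$), I would apply the reverse triangle inequality to get
\[
|\text{bracket}| \;\geq\; |U|\bigl(1 - c\tfrac{\log n}{n}\bigr) - |V|\bigl(1 + c\tfrac{\log n}{n}\bigr) - C_1\epsilon_n
\;=\; 2\sinh A - 2c\tfrac{\log n}{n}\cosh A - C_1\epsilon_n.
\]
Restricting, say, to $A \leq 1$ (the complementary range $A > 1$ gives a bracket of size at least a fixed positive constant minus $\mathcal{O}(\epsilon_n)$, which is certainly nonzero for large $n$), one has $\sinh A \geq A/2$ and $\cosh A \leq 2$, hence
\[
|\text{bracket}| \;\geq\; A - 4c\tfrac{\log n}{n} - C_1 \epsilon_n.
\]
Since $0 < \nu \leq 1/2$ we have $\epsilon_n = n^{\nu-1/2}(\log n)^{-\nu-1/2} \geq n^{-1/2}(\log n)^{-1}$, which dominates $\log n / n$ as $n\to\infty$. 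Thus for $n$ large the right-hand side is strictly positive whenever $A \geq C\epsilon_n$ with $C$ chosen bigger than $C_1 + 1$. This rules out zeros in that regime and yields \eqref{Allzeros}.

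The only delicate point is uniformity in $z \in E_\delta$ with $\Re z \geq 0$: both the $\mathcal{O}(\log n/n)$ and $\mathcal{O}(\epsilon_n)$ error terms in \eqref{asymp:Pn:inner} are uniform by Theorem \ref{Th3}, and $\psi$, $\theta_n$ are analytic and bounded on $E_\delta$, so the constants $c$, $C_1$ can be chosen independently of $z$. I expect this bookkeeping — together with the verification that $\log n/n = o(\epsilon_n)$ — to be the only real obstacle; everything else is an elementary sinh/cosh estimate.
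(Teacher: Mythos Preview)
Your proposal is correct and follows essentially the same approach as the paper. Both arguments note that the prefactor in \eqref{asymp:Pn:inner} is nonvanishing on $E_\delta$, set $F_n(z)=\exp\bigl(\tfrac{\nu\pi}{2}\psi(z)+i\theta_n(z)\bigr)$ so that $|F_n|=e^{A}$ with $A$ as you define it, and then use a reverse triangle inequality to force $|A|=\mathcal{O}(\epsilon_n)$ at any zero. The paper compresses this into the single line $|F_n+F_n^{-1}|\ge e^{|A|}-e^{-|A|}\ge 2|A|$ after first absorbing the $\mathcal{O}(\log n/n)$ multiplicative errors, whereas you treat the a priori case $A>1$ explicitly and phrase the estimate via $\sinh/\cosh$; these are cosmetic differences. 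One harmless slip: $\theta_n$ is \emph{not} bounded on $E_\delta$ uniformly in $n$ (it grows like $n$), but you never actually use that claim---the uniformity of your constants $c$, $C_1$ already follows from the uniform $\mathcal{O}$-terms in Theorem~\ref{Th3}.
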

\begin{proof}
It is enough to consider $\Re z \geq 0$.

Let 
\[ F_n(z) = \exp \left( \frac{\nu \pi}{2} \psi(z) + i \theta_n(z)\right) \]
Then by \eqref{asymp:Pn:inner} we have that zeros of $\widetilde{P}_n$ in $E_{\delta}$
with $\Re z > 0$ are in the region where
\[ F_n(z) \left(1+ \mathcal{O}\left(\frac{\log n}{n}\right) \right) + F_n(z)^{-1}\left(1 + \mathcal{O}\left(\frac{\log n}{n}\right)\right) = \mathcal{O}(\epsilon_n). \]
This leads to
\[ F_n(z) + F_n(z)^{-1}= \mathcal{O}(\epsilon_n), \]
and so there is a constant $C > 0$ such that all zeros in $E_{\delta}$
satisfy 
\begin{equation} \label{Allzeros2} 
	|F_n(z) + F_n(z)^{-1}| \leq C \epsilon_n  \end{equation}
if $n$ is large enough.

Note that 
\[ |F_n(z)| =  \exp\left(\Re \frac{\nu \pi}{2} \psi(z) - \Im \theta_n(z) \right). \]
Thus if \eqref{Allzeros} is not satisfied then either
$ |F_n(z)| \geq \exp(C \epsilon_n)$ or $|F_n(z)| \leq \exp(-C \epsilon_n)$.
In both cases it follows that
\[ |F_n(z) + F_n(z)^{-1}| \geq e^{C \epsilon_n} - e^{-C \epsilon_n} \geq 2 C \epsilon_n. \]
Because of \eqref{Allzeros2} this cannot happen for zeros of $\widetilde{P}_n$ in $E_{\delta}$ if $n$ is large
enough, and the lemma follows.
\end{proof}

The lemma is the main ingredient to prove Theorem \ref{Th2}. 

\begin{proof}[Proof of Theorem \ref{Th2}]
In the proof we
use $c_1, c_2, \ldots$, to denote positive constants that do not depend
on $n$ or $z$. The constants will depend on $\delta > 0$.

It is easy to see from the definition \eqref{defthetan} 
that $\theta_n'(x)  \leq c_1 n < 0$ for $x \in (0, 1- \delta)$
This implies that for some constant $c_2 > 0$
\begin{equation} \label{zeros1} 
	\Im \theta_n(z) \begin{cases} \leq - c_2 n \Im z & \text{ for } z \in E_{\delta}, \Re z >0, \Im z \geq 0 \\
	\geq  c_2 n |\Im z| & \text{ for } z \in E_{\delta}, \Re z > 0, \Im z < 0 \end{cases} 
	\end{equation}
There are also constants $c_3, c_4 > 0$ such that
\begin{equation} \label{zeros2} 
	c_3 < \Re \frac{\nu \pi}{2} \psi(z) < c_4, \qquad z \in E_{\delta}, \Re z > 0, 
	\end{equation}
see \eqref{complexpsi}.
Thus if $\Im z \geq 0$ then by \eqref{zeros1} and \eqref{zeros2}
\[ \left| \Re \frac{\nu \pi}{2} \psi(z) - \Im \theta_n(z) \right| \geq c_2 n \Im z + c_3 \geq c_3 > 0\]
and thus there are no zeros in $E_{\delta}$ with $\Im z \geq 0$ by Lemma \ref{lem:allzeros} if $n$ is large enough.

For $\Im z \leq 0$ we have by \eqref{zeros1} and \eqref{zeros2}
\[ \left| \Re \frac{\nu \pi}{2} \psi(z) - \Im \theta_n(z) \right| \geq c_2 n |\Im z| - c_4  \]
It follows from this and Lemma \ref{lem:allzeros}  that for large $n$, there are 
no zeros with $\Im z \leq -\frac{c_5}{n}$ if $c_5 > c_4/c_2$. 

Now assume $z\in E_{\delta}$ with $ - \frac{c_5}{n} < \Im z < 0$ and $\Re z > 0$. Write $  z = x + i y$.
Then by Taylor expansion
\[ \frac{\nu \pi}{2} \psi(z) = \frac{\nu \pi}{2} \psi(x) + \mathcal{O}(1/n) \]
and, see also \eqref{defthetan},
\begin{align*} 
	\theta_n(z) & = \theta_n(x) + iy \theta_n'(x) + \mathcal{O}(1/n)  \\
		& = \theta_n(x) - iy n \pi \psi(x) + \mathcal{O}(1/n) 
		\end{align*}
and $\mathcal{O}$ terms are uniform for $z$ in the considered region.

Then since $\psi(x)$ and $\theta_n(x)$ are real, we have
\begin{align*} 
	\Re \frac{\nu \pi}{2} \psi(z) - \Im \theta_n(z) & = \frac{\nu \pi}{2} \psi(x) + yn \pi \psi(x) + \mathcal{O}(1/n) \\
		&= \left(\frac{\nu}{2} + ny \right) \pi \psi(x) + \mathcal{O}(1/n) 
		\end{align*}
Thus if $|\frac{\nu}{2 } + ny| \geq c_6 \epsilon_n$ then by the above and \eqref{zeros2} 
\[ \left| \Re \frac{\nu \pi}{2} \psi(z) - \Im \theta_n(z) \right|
	\geq \frac{2 c_6 c_3}{\nu} \epsilon_n  + \mathcal{O}(1/n) \]
and from Lemma \ref{lem:allzeros} it follows that $z = x + iy$ is not a zero if $c_6$ is large enough.

Thus for large $n$ all zeros $z = x + i y$ of $\widetilde{P}_n$ in $E_{\delta}$ satisfy
\[ \left|\frac{\nu}{2 } + n y\right| \leq c_6 \epsilon_n. \]
Then $in \pi z$ is a zero of $P_n$, see \eqref{tildePn}, and the
real part of this zero is $-n \pi y$ which differs from $\frac{\nu \pi}{2}$ by an amount
less than $\pi c_6 \epsilon_n$. This proves Theorem \ref{Th2}. 
\end{proof}

\section*{Acknowledgements}
We thank Daan Huybrechs for suggesting the problem and for stimulating conversations.

A. Dea\~{n}o gratefully acknowledges financial support from projects FWO G.0617.10 and FWO G.0641.11, 
funded by FWO (Fonds Wetenschappelijk Onderzoek, Research Fund Flanders, Belgium), and projects MTM2012--34787 and MTM2012-36732--C03--01, from Ministerio de Econom\'ia y Competitividad de Espa\~{n}a (Spanish Ministry of Economy and Competitivity).

A.B.J. Kuijlaars is supported by KU Leuven Research Grant OT/12/073, the Belgian Interuniversity Attraction Pole
P07/18, FWO Flanders projects G.0641.11 and G.0934.13, and by Grant No. MTM2011-28952-C02 of
the Spanish Ministry of Science and Innovation. 

P. Rom\'an was supported by the Coimbra Group Scholarships Programme at KULeuven in the
period February-May 2014.

\end{document}